\documentclass[a4paper,12pt]{article}
\usepackage{amsrefs,amstext,amsmath,amsthm,amsfonts,amssymb,enumerate,amscd,makeidx,mathrsfs}
\usepackage[refpage,intoc]{nomencl}
\usepackage{hyperref}
\usepackage[dvipdf]{graphicx}
\usepackage[margin=3 true cm]{geometry}
\usepackage{todonotes}
\usepackage{xspace}

\newtheorem{theorem}{Theorem}[section]
\newtheorem{proposition}[theorem]{Proposition}
\newtheorem{lemma}[theorem]{Lemma}
\newtheorem{corollary}[theorem]{Corollary}

\newtheorem{definition}[theorem]{Definition}
\newtheorem{remark}[theorem]{Remark}

\makenomenclature

\title{Compactness of Schur A-multipliers and Haagerup Tensor Products}
\author{Weijiao He}

\begin{document}

\maketitle

\begin{abstract}
In this paper we study the connection between Haagerup tensor product and compactness of Schur $A$-multiplier. In particular, we give a new characterization of elementary $C^{\ast}$-algebra in terms of completely compact Schur $A$-multiplier.
\end{abstract}

\section{Introduction}

Schur multipliers, a class of maps generalising the operators of entry-wise (Schur) multiplication on finite matrices, were first abstractly studied by Grothendieck in \cite{M025}. Since then they have played an important role in operator theory. In the simplest situation they arise in the following manner: to a (discrete) set $X$ and a function $\phi: X \times X \to \mathbb{C}$, one associates an operator $S_{\phi}$ on the space of compact operators on the Hilbert space $\ell^2(X)$; if the resulting map is (completely) bounded, we call $S_{\phi}$ a Schur multiplier with symbol $\phi$. 

In \cite{MR1766604}, Hladnik studied an important class of Schur multipliers: compact Schur multipliers, i.e the map $S_{\phi}$ is a compact operator. Hladnik identified the space of Schur multipliers with Haagerup tensor product $c_0 \otimes_h c_0$.

Recently, in \cite{MTT16}, McKee, Todorov and Turowska generalised the notion of Schur multipliers to new setting, on which we will inverstigate the generalization of Hladnik's thorem. This paper is organised as following.

In section 2, we give some basic definitions which we used in this paper, including the definition of Schur $A$-multiplier.

In section 3, we prove that if either $X$ or $Y$ is not discrete measure space, then there is no non-zero compact Schur $A$-multiplier. By this result, we could restrict our attention to the case $X=Y=\mathbb{N}$.

In section 4, we study some properties of Haagerup tensor product. We prove a Theorem which based on the work of Smith \cite{MR1138841},  Ylinen \cite{MR0296716} and Saar (see \cite{compactness}), to get a complete relationship between Haagerup tensor product and completely compact maps on $C^{\ast}$-algebras, which will be very useful for the study of the compactness of Schur $A$-multipliers in the later section. Furthemore, that theorem gives a new characterization of the $C^{\ast}$-algebra of compact operators of some Hilbert space.

In second 5 and section 6, we study the complete compactness of Schur $A$-multipliers when $A$ is $\ast$-isomorphic to a subalgebra of $\mathcal{K}(H)$, and we prove a generalisation of Hladnik's Theorem.

In Section 7, it contains our main results. We study the relationship between complete compactness and compactness, in the end we use these relations and the results of previous sections to prove that the generalization of Hladnik's Theorem is true if and only if the $C^{\ast}$-algebra $A$ if $\ast$-isomorphic $\mathcal{B}(H)$ for some finite dimensional Hilbert space $H$.

\section{Revision of the Operator-Valued Schur Multiplier}

For any measure space ($Z,\lambda$) and Banach space $B$, we let $\mathfrak{L}_2(Z, B)$ denote the space of all square integrable $\lambda$-measurable functions from $Z$ into $B$. If $B=K$ for some Hilbert space $K$, then $\mathfrak{L}_2(Z, K)$ is a Hilbert space. Furthermore, for any Hilbert space $K$, we denote the space of bounded operators on $K$ by  $\mathcal{O}(K)$, and denote the space of compact operators on $K$ by  $\mathcal{O}_c(K)$ .

Let $(X,\mu)$ and $(Y,\nu)$ be standard measure spaces (see \cite{MTT16}), $H$ a separable Hilbert space, and $A \subseteq \mathcal{O}(H)$ a $C^{\ast}$-algebra. We write $\mathcal{O}_c=\mathcal{O}_c(\mathfrak{L}_2(X), \mathfrak{L}_2(Y))$ for the space of all compact operators from $\mathfrak{L}_2(X,H)$ into $\mathfrak{L}_2(Y,H)$. If $k \in \mathfrak{L}_2(Y \times X, \mathcal{O}(H))$ and $\xi \in \mathfrak{L}_2(X,H)$ then for almost all $y \in Y$, the function $x \to k(y,x) \xi(x)$ is weakly measurable; moreover
\begin{equation*}
 \int_X \|k(y,x)\xi(x)\| d \mu(x) \leq \|\xi\|_2  \text{\Huge{(}}\int_X \|k(y,x)\|^2 d \mu(x)\text{\Huge{)}}^{\frac{1}{2}}. 
\end{equation*}
Such functions $k$ will often be referred to as $kernels$. It follows that the formula
\begin{equation*}
(T_k\xi)(y)=\int_X k(y,x) \xi(x) d\mu(x) \ \ \ \ (y \in Y),
\end{equation*}
defines a (weakly measurable) function $T_k\xi: Y \to H$, and a bounded operator $T_k: \mathfrak{L}_2(Y, H) \to \mathfrak{L}_2(X, H)$. Moreover,  by  \cite{MTT16} we have  $\| T_k \| \leq \| k  \|_2$    and $ T_k$=0 if and only if $k=0$ almost everywhere.

 If $\mathcal{X}$ and $\mathcal{Y}$ are operator space, we denote the space of all completely bounded linear maps from $\mathcal{X}$ into $\mathcal{Y}$ by $CB(\mathcal{X},\mathcal{Y})$ and write $CB(\mathcal{X})=CB(\mathcal{X},\mathcal{X})$. For the background of operator spaces and completely bounded maps, we refer the reader to Section 1.2. In this thesis, if $f$ is a linear map from an operator space $\mathcal{X}$ into an operator space $\mathcal{Y}$, we use $f_n$ to denote the corresponding map from $M_n(\mathcal{X})$ into $M_n(\mathcal{Y})$.

Now we define
\begin{equation*}
\mathcal{S}_2(X \times Y, A)=\{T_k: k \in \mathfrak{L}_2(Y \times X, A)\}
\end{equation*}
and note that $\mathcal{S}_2(Y \times X, A)$ is a dense subspace of the minimal tensor product $\mathcal{O}_c \otimes A$, thus in particular it is an operator space. A function $\varphi: X \times Y \to CB(A, \mathcal{O}(H))$ will be called $pointwise \ measurable$ if, for every $a \in A$, the function $(x,y) \to \varphi(x,y)(a)$ from $X \times Y$ into $\mathcal{O}(H)$ is weakly measurable (\cite{MTT 16}). Let $\varphi: X \times Y \to CB(A, \mathcal{O}(H))$ be a bounded pointwise measurable function. For $k \in \mathfrak{L}_2(Y \times X, A)$, let $\varphi \cdot k: Y \times X \to \mathcal{O}(H)$ be the function given by
\begin{equation*}
(\varphi \cdot k)(y,x)=\varphi(x,y)(k(y,x)) \ \ \ \ ((y,x) \in Y \times X).
\end{equation*}
It is easy to show that $\varphi \cdot k$ is weakly measurable and $\| \varphi \cdot k \|_2 \leq \| \varphi \|_{\infty} \| k\|_2$ (\cite[Section 2]{MTT 16}). Let
\begin{equation*}
S_{\varphi}: \mathcal{S}_2(Y \times X, A) \to \mathcal{S}_2(Y \times X, \mathcal{O}(H))
\end{equation*}
be the linear map given by
\begin{equation*}
S_{\varphi}(T_k)=T_{\varphi \cdot k} \ \ \ ( k \in \mathfrak{L}_2(Y \times X, A)).
\end{equation*}

\begin{definition}
A bounded poinwise measurable map
\begin{equation*}
\varphi: X \times Y \to CB(A, \mathcal{O}(H))
\end{equation*}
will be called a Schur $A$-multiplier if the map $S_{\varphi}$ is completely bounded.
\end{definition}

Equivalently, a bounded pointwise measurable function $\varphi: X \times Y \to CB(A,\mathcal{O}(H))$ is a Schur $A$-multiplier if and only if the map $S_{\varphi}$ possesses a completely bounded extension to a map from $\mathcal{O}_c \otimes A$ into $\mathcal{O}_c \otimes \mathcal{O}(H)$ (which we will still denote by $S_{\varphi}$). 

\bigskip

 For the sake of convenience, we will not distinguish Schur $A$-multiplier $\varphi: X \times Y \to CB(A, \mathcal{O}(H))$ and the corresponding linear map
\begin{equation*}
S_{\varphi}: \mathcal{O}_c(\mathfrak{L}_2(X), \mathfrak{L}_2(Y)) \otimes A \to \mathcal{O}_c(\mathfrak{L}_2(X), \mathfrak{L}_2(Y)) \otimes \mathcal{O}(H),
\end{equation*}
when we use the terminology `Schur $A$-multiplier'.

Another important notion is complete compactness which is defined as follows (see \cite{completelycompact})

\begin{definition}
If $\mathcal{X}$ and $\mathcal{Y}$ are operator spaces, a completely bounded map $\Phi: \mathcal{X} \to \mathcal{Y}$ is called completely compact if for each $\epsilon >0$ there exists a finite dimensional subspace $F \subset \mathcal{Y}$ such that
\begin{equation*}
{\rm{dist}} (\Phi^{(m)}(x), M_m(F)) < \epsilon,
\end{equation*}
for every $x \in M_m(\mathcal{X})$ with $\|x\| \leq 1$ for every $m \in \mathbb{N}$.
\end{definition}
Let us recall that a completely bounded linear map which is approximated by a net of linear maps with finite rank in the complete bounded norm is completely compact \cite[Proposition 3.2]{compactness}. We will use this fact without reference frequently.

If $\mathcal{X}$ and $\mathcal{Y}$ are operator spaces, we  denote the set of compact (resp. completely compact) operators from $\mathcal{X}$ into $\mathcal{Y}$ by $\mathfrak{CO}(\mathcal{X,Y})$ (resp. $\mathfrak{CCO}(\mathcal{X,Y})$).

\begin{remark}\label{hfsadfjklhvuivrioreiu}
Let  $\mathcal{X}$, $\mathcal{Y}$ be operator spaces, $\varphi: \mathcal{X} \to \mathcal{Y}$ be completely bounded linear map.  If $\mathcal{Z} \subset \mathcal{Y}$ is operator space such that $\varphi(\mathcal{X}) \subset \mathcal{Z}$ and there is completely bounded map $h: \mathcal{Y} \to \mathcal{Z}$ with $h(d)=d$ for all $d \in \mathcal{Z}$, we define $\psi: \mathcal{X} \to \mathcal{Z}$ by $\psi(a)=\varphi(a)$ for all $a \in \mathcal{X}$, then since the composition of completely compact map and completely bounded map is completely compact, we conclude that $\varphi$ is completely compact if and only if $\psi$ is completely compact $($the proof is easy consequence of \cite[Proposition 3.2]{compactness}$)$. We will use this fact without reference in this paper.
\end{remark}

\begin{lemma}\label{negativecriteria}
Let $\mathcal{X}$ and $\mathcal{Y}$ be operator spaces, $\varphi: \mathcal{X} \to \mathcal{Y}$ a completely bounded linear map which is not completely compact. If $\mathcal{Z}$ is an operator space containing $\mathcal{X}$ as a subspace, and $f: \mathcal{Z} \to \mathcal{X}$ is a completely bounded surjective linear map such that $\|f\|_{cb}=1$ and $f|\mathcal{X}=I_{\mathcal{X}}$ (here $I_{\mathcal{X}}: \mathcal{X} \to \mathcal{X}$ is the identity operator defined on $\mathcal{X}$), then $\varphi \circ f$ is not completely compact.
\end{lemma}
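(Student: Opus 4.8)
The plan is to argue by contradiction. Suppose $\varphi \circ f$ were completely compact; I want to deduce that $\varphi$ itself is completely compact, contradicting the hypothesis. The natural move is to factor $\varphi$ through $\varphi \circ f$ by exploiting that $f$ restricts to the identity on $\mathcal{X}$. Precisely, let $\iota : \mathcal{X} \hookrightarrow \mathcal{Z}$ be the inclusion, which is completely contractive (in fact a complete isometry). Then for every $a \in \mathcal{X}$ we have $(\varphi \circ f)(\iota(a)) = \varphi(f(a)) = \varphi(a)$, since $f|\mathcal{X} = I_{\mathcal{X}}$. Hence $\varphi = (\varphi \circ f) \circ \iota$ as maps on $\mathcal{X}$.

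The key step is then to invoke the standard stability property: the composition of a completely compact map with a completely bounded map (on either side) is again completely compact. This is exactly the fact recorded in Remark \ref{hfsadfjklhvuivrioreiu} and used there, ultimately a consequence of \cite[Proposition 3.2]{compactness}: if $\psi$ is completely compact and $g$ is completely bounded, then $\psi \circ g$ is completely compact, because $\psi$ is a $\|\cdot\|_{cb}$-limit of finite-rank maps $\psi_n$, so $\psi_n \circ g$ is finite-rank and $\|\psi_n \circ g - \psi \circ g\|_{cb} \le \|\psi_n - \psi\|_{cb}\,\|g\|_{cb} \to 0$. Applying this with $\psi = \varphi \circ f$ (completely compact by assumption) and $g = \iota$ (completely bounded), we conclude $\varphi = (\varphi \circ f) \circ \iota$ is completely compact — the desired contradiction. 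Therefore $\varphi \circ f$ is not completely compact.

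I do not expect any serious obstacle here; the lemma is essentially a bookkeeping consequence of the definition of complete compactness together with the factorization $\varphi = (\varphi\circ f)\circ\iota$. The only point that requires a moment's care is making sure the composition stability is being applied in the correct order — here the completely bounded map $\iota$ is precomposed (applied first), and one checks that the finite-rank approximation argument works equally well for precomposition as for postcomposition. The hypotheses $\|f\|_{cb} = 1$ and surjectivity of $f$ are not actually needed for this direction of the argument; only $f|\mathcal{X} = I_{\mathcal{X}}$ and complete boundedness of $f$ (hence of $\iota$) are used. I would simply note this in passing rather than dwelling on it.
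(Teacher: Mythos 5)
Your proof is correct in substance but takes a genuinely different route from the paper's. The paper argues directly with the definition: since $f$ is surjective, $\|f\|_{cb}=1$ and $f|\mathcal{X}=I_{\mathcal{X}}$, the set $f_n(\{x\in M_n(\mathcal{Z}):\|x\|\le 1\})$ is exactly $\{y\in M_n(\mathcal{X}):\|y\|\le 1\}$ for every $n$, hence $(\varphi\circ f)_n$ and $\varphi_n$ have identical images on matrix unit balls, and any $\epsilon$ witnessing the failure of complete compactness of $\varphi$ witnesses it verbatim for $\varphi\circ f$. Your contrapositive via the factorization $\varphi=(\varphi\circ f)\circ\iota$ is cleaner and, as you correctly observe, shows that surjectivity and the normalization $\|f\|_{cb}=1$ are superfluous; the paper's hypotheses buy only the slightly stronger statement that the two maps have the same ``modulus of non-compactness'' at every matrix level. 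One caveat on your justification of the composition step: you derive ``completely compact precomposed with completely bounded is completely compact'' from a finite-rank approximation of $\varphi\circ f$, but \cite[Proposition 3.2]{compactness} gives only that $\|\cdot\|_{cb}$-limits of finite-rank maps are completely compact, not the converse, and the converse fails in general (it requires an approximation property). This is easily repaired, and in your situation it is immediate: $\iota$ is a complete isometry, so $\iota_m$ maps the unit ball of $M_m(\mathcal{X})$ into that of $M_m(\mathcal{Z})$, and therefore the finite-dimensional subspace $F\subset\mathcal{Y}$ furnished by the complete compactness of $\varphi\circ f$ serves equally for $\varphi=(\varphi\circ f)\circ\iota$; for a general completely bounded $g$ with $\|g\|_{cb}=C>0$ one rescales, noting that $M_m(F)$ is a subspace so distances to it scale linearly. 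With that substitution the argument is complete and correct.
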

\begin{proof}
we have
\begin{equation*}\begin{split}
&\{y \in M_n(\mathcal{X}): \|y\| \leq 1\} =f_n(\{x \in M_n(\mathcal{Z}): \|x\| \leq 1\}),
\\& \varphi_n(\{y \in M_n(\mathcal{X}): \|y\| \leq 1\}) = (\varphi \circ f)_n(\{x \in M_n(\mathcal{Z}): \|x\| \leq 1\}),
\end{split}\end{equation*}
by the definition of complete compactness $\varphi \circ f$ is not completely compact.
\end{proof}

\section{Some properties of compact
\\ Schur A-multiplier}
In this section, we will prove that if  $(X, \mu)$ and $(Y, \upsilon)$ are standard measure spaces, then there is no non-trivial compact Schur A-multiplier if either $(X, \mu)$ or $(Y, \upsilon)$ is non-atomic. In the following $A$ $\subseteq \mathcal{O}(H)$ is $C^{\ast}$-algebra, and we fix admissible topologies on $X$ and $Y$ respectively.

\bigskip

\begin{lemma} \label{lemma 1}
Let D be any compact subset of $X \times Y$ with $(\mu \times \upsilon) (D) >0$. Then for arbitrary positive number $\epsilon>0$, there are $\mu$-measurable subset $D_X$ of $X$ and $\upsilon$-measurable subset $D_Y$ of $Y$ such that
\begin{equation*}\begin{split}
& (\mu \times \upsilon )((D_X \times D_Y) \setminus D) < \epsilon \cdot (\mu \times \upsilon)(D_X \times D_Y)<{\infty}.
\end{split}\end{equation*}
Furthermore, if $(X, \mu) ( resp. (Y, \upsilon))$ is non-atomic, there are infinitely many mutually disjoint $\mu$ -measurable subsets  $\{D_n\}_{n \in \mathbb{N} }$ of $D_X$(resp. there are infinitely many mutually disjoint $\upsilon$ -measurable subsets  $\{C_n\}_{n \in \mathbb{N} }$ of $D_Y$ ), such that
\begin{equation*}\begin{split}
& (\mu \times \upsilon )((D_n \times D_Y) \setminus D) < \epsilon \cdot (\mu \times \upsilon)(D_n \times D_Y) <{\infty},
\\& (resp. \  (\mu \times \upsilon )((D_X \times C_n) \setminus D) < \epsilon \cdot (\mu \times \upsilon)(D_X \times C_n) <{\infty})
\end{split}\end{equation*}
for all $n \in \mathbb{N}$.
\end{lemma}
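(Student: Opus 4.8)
The plan is to read this as a Lebesgue-density statement for the product measure: first produce a measurable rectangle $D_X\times D_Y$ in which $D$ has relative complement of mass less than $\epsilon$, and then, when $(X,\mu)$ (resp. $(Y,\upsilon)$) is non-atomic, chop $D_X$ (resp. $D_Y$) into countably many pieces over which this estimate persists. As a preliminary reduction I would assume $0<(\mu\times\upsilon)(D)<\infty$: by $\sigma$-finiteness of standard measure spaces one may replace $D$ by $D\cap(X_0\times Y_0)$ for a rectangle of finite measure chosen so the intersection still has positive measure, and since $(D_X\times D_Y)\setminus D\subseteq(D_X\times D_Y)\setminus D'$ whenever $D'\subseteq D$, any conclusion of the displayed form proved for such a $D'$ transfers verbatim to $D$. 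Compactness of $D$ is used only to know $D$ is measurable (and, if one prefers to skip the reduction, to make $(\mu\times\upsilon)(D)$ finite directly).

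For the first assertion, fix $\delta$ with $0<\delta<\epsilon\,(\mu\times\upsilon)(D)$. By the construction of the product measure from coverings by measurable rectangles there is a countable family $\{P_j\times Q_j\}$ with $D\subseteq\bigcup_j P_j\times Q_j$ and $\sum_j(\mu\times\upsilon)(P_j\times Q_j)\le(\mu\times\upsilon)(D)+\delta$; in particular every rectangle has finite measure. Since $D\subseteq\bigcup_j\big((P_j\times Q_j)\cap D\big)$ and $(\mu\times\upsilon)(R)=(\mu\times\upsilon)(R\cap D)+(\mu\times\upsilon)(R\setminus D)$ for each rectangle $R$, subtracting gives $\sum_j(\mu\times\upsilon)\big((P_j\times Q_j)\setminus D\big)\le\delta$. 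If $(\mu\times\upsilon)\big((P_j\times Q_j)\setminus D\big)\ge\epsilon\,(\mu\times\upsilon)(P_j\times Q_j)$ held for every $j$, summation would force $\delta\ge\epsilon\sum_j(\mu\times\upsilon)(P_j\times Q_j)\ge\epsilon\,(\mu\times\upsilon)(D)$, contradicting the choice of $\delta$; hence some $j_0$ fails it, and for that $j_0$ necessarily $(\mu\times\upsilon)(P_{j_0}\times Q_{j_0})>0$ (otherwise the strict inequality reads $0<0$). Setting $D_X=P_{j_0}$, $D_Y=Q_{j_0}$ proves the first claim, and also gives $0<\mu(D_X)<\infty$, $0<\upsilon(D_Y)<\infty$.

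Now suppose $(X,\mu)$ is non-atomic; the case of $(Y,\upsilon)$ is completely symmetric. Put $f(x)=\upsilon\big(\{y\in D_Y:(x,y)\notin D\}\big)$; by Tonelli's theorem $f$ is $\mu$-measurable and $\int_{D_X}f\,d\mu=(\mu\times\upsilon)\big((D_X\times D_Y)\setminus D\big)<\epsilon\,\mu(D_X)\upsilon(D_Y)$, so the mean of $g:=f/\upsilon(D_Y)$ over $D_X$ is some $c<\epsilon$. Choosing $\epsilon'$ with $c<\epsilon'<\epsilon$, the set $G=\{x\in D_X:g(x)<\epsilon'\}$ has $\mu(G)>0$ (otherwise $g\ge\epsilon'$ a.e.\ on $D_X$, forcing $c\ge\epsilon'$), and for every measurable $S\subseteq G$ with $\mu(S)>0$ one has $\int_S g\,d\mu<\epsilon'\mu(S)<\epsilon\mu(S)$, the integrand $\epsilon'-g$ being strictly positive on the positive-measure set $S$; by Tonelli again this says $(\mu\times\upsilon)\big((S\times D_Y)\setminus D\big)<\epsilon\,(\mu\times\upsilon)(S\times D_Y)<\infty$. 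Since $\mu$ restricted to $G$ is still non-atomic with $0<\mu(G)<\infty$, one peels off inductively pairwise disjoint measurable subsets $D_1,D_2,\dots\subseteq G$ of positive measure (each step possible because a positive-measure non-atomic set is not an atom); these $D_n$ lie in $D_X$ and satisfy the required estimate.

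The only step needing care is the averaging in the second paragraph: one must average the excess masses $(\mu\times\upsilon)(R\setminus D)$ over a near-optimal rectangle cover of $D$ (not their ratios, nor the intersections), and then check that the selected rectangle has strictly positive finite measure so the strict inequality is not vacuous. After that the non-atomic part is routine, combining Tonelli's theorem with the elementary fact that a non-atomic measure of positive finite total mass splits into countably many pieces of positive measure.
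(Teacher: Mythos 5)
Your proof is correct. The first half is essentially the paper's argument: both of you take a near-optimal countable rectangle cover of $D$ and average to find one rectangle $P_{j_0}\times Q_{j_0}$ in which the excess mass is less than an $\epsilon$-fraction; your version is actually tidier, since you sum the excess masses directly (which does not require the cover to be disjoint, as the paper's does) and you explicitly check that the selected rectangle has strictly positive finite measure, a point the paper leaves implicit. Where you genuinely diverge is the non-atomic half. The paper fixes $n$ large, uses non-atomicity to split $D_X$ into $n$ pieces of equal measure, runs a pigeonhole argument to find at least two good pieces, keeps one as $D_1$, and iterates on the other --- an induction that must be re-justified at every step. You instead apply Tonelli to the slice function $f(x)=\upsilon(\{y\in D_Y:(x,y)\notin D\})$, observe its normalized average over $D_X$ is below $\epsilon$, and extract a single positive-measure set $G=\{g<\epsilon'\}$ with the stronger property that \emph{every} positive-measure subset of $G$ satisfies the required estimate; the countable disjoint family then falls out of one application of the standard splitting property of non-atomic measures, with no iteration and no equipartition. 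Your route buys a cleaner, stronger conclusion (any countable disjoint decomposition of $G$ into positive-measure pieces works) at the cost of invoking Tonelli, which is harmless here since standard measure spaces are $\sigma$-finite and $D$, being compact, is product-measurable; the paper's route stays at the level of the product outer measure but pays for it with a more delicate combinatorial bookkeeping. Both are valid.
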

\begin{proof}
Let $0< \epsilon <1$ be a given positive number, we choose a number $\delta$ such that $0< \delta < \epsilon \cdot (\mu \times \upsilon )(D)$. By the construction of the product measures (see ~\cite{MR1681462}), there exists a set $\{ V_n \times W_n: n \in  \mathbb{N} \}$ of disjoint rectangles such that $D \subset \cup_{n=1}^{\infty}  V_n \times W_n $ and
\begin{equation*}\begin{split}
\sum _{n=1}^{\infty} \ (\mu \times \upsilon )(V_n \times W_n)< (\mu \times \upsilon )(D)+ \delta.
\end{split}\end{equation*}
It is easy to see that there is at least one $n \in \mathbb{N}$ such that
\begin{equation*}
(\mu \times \upsilon )((V_n \times W_n) \setminus D)< \epsilon \cdot \mu \times \upsilon(V_n \times W_n).
\end{equation*}

\bigskip
Now suppose $(X, \mu)$ is non-atomic. Let $n \in \mathbb{N} $ be such that
\begin{equation}\begin{split}\label{definitionofn}
&\quad \frac{1}{n} \cdot (\mu \times \upsilon )(D_X \times D_Y)
\\& < \epsilon \cdot  (\mu \times \upsilon )(D_X \times D_Y)-(\mu \times \upsilon)((D_X \times D_Y) \setminus D) ,
\end{split}\end{equation}
and let $\{E_k\}_{k=1}^{n}$ be disjoint $\mu$-measurable subsets of $D_X$ such that $\mu(E_k)=\frac{1}{n} \mu(D_X)$ for each $k$ (for the existence of these sets, see ~\cite[I.4]{probability}). Thus we have
\begin{equation*}
\mu(D_X)= \mu(\cup_{k=1}^n E_k).
\end{equation*}
We claim that there are at least two distinct numbers $r, \ m \in \mathbb{N}$ such that
\begin{equation*}
(\mu \times \upsilon )((E_k  \times D_Y)\setminus D) \le \epsilon \cdot (\mu \times \upsilon )(E_k \times D_Y), k=r, m.
\end{equation*}
because if this was not true we would have
\begin{equation*}\begin{split}
\epsilon \cdot  (\mu \times \upsilon )(D_X \times D_Y)-(\mu \times \upsilon )((D_X \times D_Y) \setminus D) & <  (\mu \times \upsilon )(E_n \times D_Y)
\\&=\frac{1}{n} \cdot (\mu \times \upsilon )(D_X \times D_Y),
\end{split}\end{equation*}
this contradicts  (\ref{definitionofn}). This contradiction proved the existence of the two distinct numbers $r$ and $m \in \mathbb{N}$. Now let $D_1=E_r$. We replace $D_X$ by $E_m$ and repeated the same argument, our proof is completed.
\end{proof}

\bigskip

We list the following two lemmas for reference, their proofs are routine. 

\bigskip
\begin{lemma} \label{lemma2}
Let $\varphi: X \times Y \to CB(A, \mathcal{O}(H))$ be a Schur-A multiplier such that $\varphi(x,y)(a)=0$ for $(\mu \times \upsilon)$-almost $(x,y) \in X \times Y$ for all $a \in A$, then $S_{\varphi}=0$.
\end{lemma}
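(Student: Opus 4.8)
The plan is to reduce the claim to the statement that $\varphi\cdot k=0$ almost everywhere for every kernel $k\in\mathfrak{L}_2(Y\times X,A)$, and then combine this with the two facts recalled in Section~2: that $T_k=0$ precisely when $k=0$ almost everywhere, and that $\mathcal{S}_2(Y\times X,A)$ is dense in $\mathcal{O}_c\otimes A$.

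The hypothesis only gives, for each \emph{fixed} $a\in A$, a $(\mu\times\upsilon)$-null set $N_a\subseteq X\times Y$ with $\varphi(x,y)(a)=0$ for $(x,y)\notin N_a$, so a priori the exceptional set depends on $a$; this is the one point that needs care. To handle it, fix $k\in\mathfrak{L}_2(Y\times X,A)$. Being measurable, $k$ is essentially separably valued (by the Pettis measurability theorem, equivalently because $k$ is an a.e.\ limit of simple functions), so there is a countable subset $\{a_j:j\in\mathbb{N}\}\subseteq A$ and a null set $N_0\subseteq Y\times X$ such that, for $(y,x)\notin N_0$, the value $k(y,x)$ lies in the closed linear span of $\{a_j:j\in\mathbb{N}\}$. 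Set $N=N_0\cup\bigcup_{j}N_{a_j}$ (identifying $X\times Y$ with $Y\times X$ in the obvious way), which is still a null set. For $(y,x)\notin N$ the map $\varphi(x,y)\colon A\to\mathcal{O}(H)$ is bounded and linear and annihilates every $a_j$, hence annihilates their closed linear span, so $(\varphi\cdot k)(y,x)=\varphi(x,y)(k(y,x))=0$. Thus $\varphi\cdot k=0$ almost everywhere, and therefore $S_{\varphi}(T_k)=T_{\varphi\cdot k}=0$.

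Since $k$ was arbitrary, $S_{\varphi}$ vanishes on the dense subspace $\mathcal{S}_2(Y\times X,A)$ of $\mathcal{O}_c\otimes A$; as $S_{\varphi}$ is completely bounded, hence norm continuous, it follows that $S_{\varphi}=0$ on all of $\mathcal{O}_c\otimes A$, which is the assertion. One can also bypass the separable-range argument by first checking $\varphi\cdot k=0$ a.e.\ for simple $A$-valued kernels $k$ (where the null set is a finite union of the $N_{a_j}$) and then passing to the limit via $\|\varphi\cdot k-\varphi\cdot k_n\|_2\le\|\varphi\|_\infty\|k-k_n\|_2$. The only genuine obstacle is the $a$-dependence of the null set; once that is dispatched, the statement reduces entirely to facts already recorded.
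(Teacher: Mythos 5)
Your proof is correct, and it supplies exactly the argument the paper omits (the paper dismisses this lemma as ``routine'' and gives no proof). You also correctly isolate and resolve the only non-trivial point --- the dependence of the null set on $a\in A$ --- via essential separability of the range of a square-integrable kernel (or, equivalently, your alternative reduction to simple kernels followed by the estimate $\|\varphi\cdot k-\varphi\cdot k_n\|_2\le\|\varphi\|_\infty\|k-k_n\|_2$), after which the facts $T_k=0\iff k=0$ a.e.\ and the density of $\mathcal{S}_2(Y\times X,A)$ in $\mathcal{O}_c\otimes A$ finish the job.
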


\bigskip

\begin{lemma}\label{decompsition of measure}
If $(Z, \lambda)$ is a standard measure space, $C=\{e \in Z: \lambda(\{e\})>0 \}$, then $C$ is countable and $(Z \setminus C, \lambda)$ is non-atomic.
\end{lemma}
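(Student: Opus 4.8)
The plan is to prove the two assertions in turn. Throughout I use that a standard measure space is $\sigma$-finite and that its underlying Borel space is standard, so that every singleton is measurable and $C$ is well defined.

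For the countability of $C$, decompose $Z=\bigsqcup_{n\in\mathbb{N}}Z_n$ with $\lambda(Z_n)<\infty$, and for $n,k\in\mathbb{N}$ put $C_{n,k}=\{e\in Z_n:\lambda(\{e\})\ge 1/k\}$. The singletons $\{e\}$, $e\in C_{n,k}$, are pairwise disjoint subsets of $Z_n$, so $C_{n,k}$ has at most $k\,\lambda(Z_n)$ elements and is in particular finite; hence $C=\bigcup_{n,k}C_{n,k}$ is countable. It follows that $C$, and therefore $Z\setminus C$, is measurable, so $(Z\setminus C,\lambda)$ is again a measure space.

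For the non-atomicity of $(Z\setminus C,\lambda)$ I would argue by contradiction. Recall that a measure fails to be non-atomic exactly when it has an atom, i.e.\ a measurable set $E$ with $\lambda(E)>0$ such that $\lambda(F)\in\{0,\lambda(E)\}$ for every measurable $F\subseteq E$; so assume $E\subseteq Z\setminus C$ is such an atom. Intersecting $E$ with a suitable $Z_n$ (which leaves it an atom) we may assume $\lambda(E)<\infty$. Now fix a countable family $\{B_j\}_{j\in\mathbb{N}}$ of measurable subsets of $Z$ that separates points; this exists because $Z$ is, up to Borel isomorphism, a Borel subset of a Polish space, so one may pull back a countable topological base. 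Build a decreasing sequence $E_0=E\supseteq E_1\supseteq\cdots$ by $E_j=E_{j-1}\cap B_j'$, where $B_j'$ denotes whichever of $B_j$ and $Z\setminus B_j$ satisfies $\lambda(E_{j-1}\cap B_j')=\lambda(E)$; such a choice is always available since $E_{j-1}\subseteq E$ forces $\lambda(E_{j-1}\cap B_j),\lambda(E_{j-1}\setminus B_j)\in\{0,\lambda(E)\}$, and these two numbers add up to $\lambda(E)>0$. Then $\lambda(E_j)=\lambda(E)$ for all $j$, so continuity of $\lambda$ from above (here $\lambda(E)<\infty$ is used) gives $\lambda\big(\bigcap_j E_j\big)=\lambda(E)>0$, whence $\bigcap_j E_j\neq\emptyset$. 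But $\bigcap_j E_j\subseteq\bigcap_j B_j'$, and the latter has at most one point because $\{B_j\}$ separates points; therefore $\bigcap_j E_j=\{e\}$ for a single point $e$, and $\lambda(\{e\})=\lambda(E)>0$ shows $e\in C$, contradicting $e\in E\subseteq Z\setminus C$. Hence $(Z\setminus C,\lambda)$ has no atom; the divisibility property actually invoked in Lemma~\ref{lemma 1} then follows from this by the standard convexity-of-range (Sierpi\'nski) argument, exactly as in the reference cited there.

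I expect the genuine content to lie entirely in the second part, and within it in the appeal to the standard Borel structure: discarding the ``large'' points of $Z$ is harmless precisely because in a standard space atoms are point-like, which is what the separating-family construction makes explicit, and without such a hypothesis the statement would be false. The counting estimate and the continuity-from-above step are routine, which is presumably why the lemma is stated without proof. An alternative route for the second part is to transport $\lambda$ to a Borel subset of $[0,1]$ and apply the intermediate value theorem to $t\mapsto\lambda(E\cap[0,t])$, which is continuous precisely because $E$ meets $C$ in a null set.
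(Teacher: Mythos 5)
Your proof is correct. The paper itself gives no argument for this lemma (it is listed as ``routine''), so there is nothing to compare against; your two steps --- the counting bound $|C_{n,k}|\le k\,\lambda(Z_n)$ for countability, and the separating-family plus continuity-from-above argument showing that an atom of a finite Borel measure on a standard Borel space must concentrate on a single point --- are exactly the standard way to fill it in.
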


\bigskip
\begin{proposition}\label{X is non-atomic}
Let  $(X, \mu)$ or $(Y, \nu)$ be non-atomic standard measure space, and $\varphi: X \times Y \to CB(A, \mathcal{O}(H))$ be a compact Schur A-multiplier, then $\varphi=0$ for $( (\mu \times \nu) )$-alomost all $(x,y) \in X \times Y$.
\end{proposition}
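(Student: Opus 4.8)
The plan is to argue by contradiction: suppose $\varphi$ is a compact Schur $A$-multiplier that is not zero almost everywhere, and produce an operator whose image under $S_\varphi$ witnesses non-compactness. Without loss of generality assume $(X,\mu)$ is non-atomic (the case of $(Y,\nu)$ is symmetric, working with the adjoint/transpose picture). Since $\varphi$ is not almost everywhere zero, there is $a\in A$ and a set of positive $(\mu\times\nu)$-measure on which $(x,y)\mapsto\varphi(x,y)(a)$ is non-zero; by inner regularity of the product measure on the standard measure space (and Lemma~\ref{decompsition of measure} to discard the atomic part of $Y$) we may fix a \emph{compact} set $D\subseteq X\times Y$ of positive measure on which $\|\varphi(x,y)(a)\|$ is bounded below by some $c>0$ (shrinking $D$ to a level set of the measurable function $\|\varphi(\cdot,\cdot)(a)\|$).

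Next I would invoke Lemma~\ref{lemma 1} with a small $\epsilon$: this yields measurable sets $D_X\subseteq X$, $D_Y\subseteq Y$ with $(D_X\times D_Y)\setminus D$ of measure less than $\epsilon\,(\mu\times\nu)(D_X\times D_Y)$, and — crucially, using that $(X,\mu)$ is non-atomic — infinitely many mutually disjoint measurable sets $\{D_n\}_{n\in\NN}\subseteq D_X$ with $(\mu\times\nu)((D_n\times D_Y)\setminus D)<\epsilon\,(\mu\times\nu)(D_n\times D_Y)<\infty$ for all $n$. Normalising the indicator functions, set $\xi_n=\mu(D_n)^{-1/2}\chi_{D_n}\otimes h\in\mathfrak{L}_2(X,H)$ for a fixed unit vector $h$ with $\|ah\|\geq c'>0$, and $\eta=\nu(D_Y)^{-1/2}\chi_{D_Y}\otimes h'$. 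For the rank-one type kernels $k_n(y,x)=\eta(y)\,\xi_n(x)^*\,a$ (suitably interpreted so that $k_n\in\mathfrak{L}_2(Y\times X,A)$ with norms bounded independently of $n$), the operators $T_{k_n}\in\mathcal{O}_c\otimes A$ form a bounded sequence, while $S_\varphi(T_{k_n})=T_{\varphi\cdot k_n}$, and the estimate on $(D_n\times D_Y)\setminus D$ forces the "mass" of $\varphi\cdot k_n$ to be concentrated where $\|\varphi(x,y)(a)\|\geq c$, up to an error controlled by $\epsilon$. Since the $D_n$ are disjoint, the $T_{k_n}$ are "orthogonal" (their ranges/supports are mutually orthogonal), so $\|T_{\varphi\cdot k_n}-T_{\varphi\cdot k_m}\|$ is bounded below for $n\neq m$; hence $\{S_\varphi(T_{k_n})\}$ has no convergent subsequence, contradicting compactness of $S_\varphi$.

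The main obstacle I anticipate is the bookkeeping in the last step: making precise the sense in which $\varphi\cdot k_n$ has a uniformly-bounded-below norm \emph{and} the $T_{\varphi\cdot k_n}$ are uniformly separated, given that $\varphi(x,y)$ is only completely bounded rather than, say, multiplication by a scalar. One must use the lower bound $\|\varphi(x,y)(a)h\|\geq$ const on $D$, the fact that $T_k=0$ iff $k=0$ a.e. together with the quantitative bound $\|T_k\|\le\|k\|_2$ in the reverse direction (via testing $T_{\varphi\cdot k_n}$ against $\eta$ and $\xi_n$ to extract a genuine lower bound on $\|T_{\varphi\cdot k_n}\|$), and the disjointness of the $D_n$ to get the separation rather than mere non-smallness. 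Choosing $\epsilon$ small at the outset absorbs the "leakage" outside $D$. Once the sequence $\{T_{k_n}\}$ is seen to be bounded in $\mathcal{O}_c\otimes A$ with images having no Cauchy subsequence, Lemma~\ref{lemma2} is not even needed for the contradiction itself — it only enters to phrase the conclusion "$\varphi=0$ a.e. $\iff S_\varphi=0$" cleanly at the end.
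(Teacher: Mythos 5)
Your overall architecture matches the paper's: isolate a set where $\|\varphi(x,y)(a)\|$ is bounded below, invoke Lemma~\ref{lemma 1} to get the rectangles $D_n\times D_Y$ with small leakage, build normalised indicator kernels $k_n$ supported on disjoint $D_n$'s, and use disjointness of supports to show $\{T_{\varphi\cdot k_n}\}$ has no Cauchy subsequence. However, there is a genuine gap at precisely the point you flag as ``bookkeeping.'' The quantity you ultimately need to bound below is
\begin{equation*}
\Bigl\|\int_{D_n}\varphi(x,y)(a)h\,d\mu(x)\Bigr\|,
\end{equation*}
and the hypothesis you extract from non-vanishing of $\varphi$ is only the pointwise \emph{operator-norm} bound $\|\varphi(x,y)(a)\|>c$ on $D$. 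This does not produce a single unit vector $h$ with $\|\varphi(x,y)(a)h\|$ bounded below on a set of positive measure (the operator $\varphi(x,y)(a)$ could be, say, a rank-one projection whose range rotates with $(x,y)$), and even granting such an $h$, the vectors $\varphi(x,y)(a)h$ can point in varying directions in $H$, so the integral over $D_n$ can be small by cancellation. Your choice ``$h$ with $\|ah\|\ge c'$'' tests the wrong operator: the kernel after multiplication is $\varphi(x,y)(a)$, not $a$.

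The paper closes this gap with a step absent from your sketch: the vector-valued Lusin theorem gives a compact $E\subseteq D$ on which $(x,y)\mapsto\varphi(x,y)(a)$ is continuous, hence has compact (totally bounded) range, and one then uniformly approximates it by a simple function $\sum_i\chi_{A_i}\otimes a_i$. Restricting to one cell $A_k$ of positive measure, $\varphi(x,y)(a)$ is within $\epsilon$ of the \emph{fixed} operator $a_k$, so one may pick $h$ with $\|a_kh\|\ge c-\epsilon$; the integrand is then nearly constant, there is no cancellation, and the lower bound $\frac{1}{2}c$ on $\|T_{\varphi\cdot k_n}-T_{\varphi\cdot k_m}\|$ follows. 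You should insert this Lusin-plus-simple-function reduction (and run Lemma~\ref{lemma 1} on $A_k$ rather than on $D$) before the separation estimate; without it the final inequality is not justified.
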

\begin{proof}
We prove that if $(X,\mu)$ is non-atomic, then $S_{\varphi}=0$ if $\varphi$ is compact Schur $A$-multiplier. The other part is proved by the same argument.

By Lemma \ref{lemma2}, there is $a \in A$ with $\|a\|=1$ such that for some positive number $c >0$,  the measure of the set
\begin{equation}\label{(3)}
D=\{(x,y) \in X \times Y: \|\varphi(x,y)(a)\| >c \}
\end{equation}
is positive.
\bigskip
By the Vector-Valued Lusin's Theorem \cite[Corollary B.28]{MR2288954}, there exists a compact subset $E \subset D$ such that $(\mu \times \upsilon) (E)>0$ and the map from $E$ into $\mathcal{O}(H)$ defined by
\begin{equation*}
  (x,y) \mapsto \varphi(x,y)(a),
\end{equation*}
is continuous. Therefore  $\{\varphi(x,y)(a): (x,y) \in E\}$ is compact subset in $\mathcal{O}(H)$. Let $\epsilon$ be a fixed positive number. By \cite[ Lemma B.23]{MR2288954}, there is a function $f$ of the form $f=\sum_{i=1}^n  \chi_{A_i} \otimes a_i$, where $a_i \in \mathcal{O}(H)$ and $A_i \subset E$ is measurable, such that $\|f\|_{\infty} \le \| \varphi \|$ and
\begin{equation*}
\|\varphi(x,y)(a)-f(x,y)\| < \epsilon \ \ \ \ ( (x,y) \in E).
\end{equation*}
By (\ref{(3)}) it is easy to see that there is at least one $A_k$ such that $(\mu \times \upsilon)(A_k) >0$ and we can assume that $A_k$ is compact.

\bigskip
By Lemma \ref{lemma 1}, there are $\mu$-measurable subset $D_X$ and $\upsilon$-measurable subset $D_Y$, such that
\begin{equation}\begin{split}\label{equation (5)}
& (\mu \times \upsilon)((D_X \times D_Y) \setminus A_k) < \epsilon \cdot (\mu \times \upsilon)(D_X \times D_Y) < {\infty}.
\end{split}\end{equation}
Now we define the function $\vartheta: X \times Y \to \mathcal{O}(H)$ in $\mathfrak{L}_2(X \times Y, \mathcal{O}(H))$ by 
\begin{equation*}\begin{split}
\vartheta (x,y)=\chi_{D_X \times D_Y}(x,y) a_k \ \ \ \ ( (x,y) \in X \times Y).
\end{split}\end{equation*}
Let $h \in H$ be such that $\|h\|=1$ and
\begin{equation}\label{equation 9}
\|\vartheta(x,y)(h)\|=\|a_k(h)\| \ge c- \epsilon \ \ \ \ ( (x,y) \in D_X \times D_Y).
\end{equation}

Since $(X, \mu)$ is non-atomic, by Lemma \ref{lemma 1}  there are  infinitely many mutually disjoint $\mu$-measurable subsets $\{D_n\}_{n \in \mathbb{N} }$ of $D_X$ such that
\begin{equation*}
(\mu \times \upsilon)((D_n \times D_Y) \setminus A_k) < \epsilon \cdot \mu \times \upsilon((D_n \times D_Y)) <{\infty}.
\end{equation*}
Define
\begin{equation}\label{euqation 8.1}
k_n(x,y):=\frac{a}{\mu (D_n)^{\frac{1}{2}}  \cdot  \upsilon (D_Y)^{\frac{1}{2}}}   \chi_{D_n \times D_Y} (x,y),
\xi_n(x):= \frac{h}{\mu (D_n)^{\frac{1}{2}}} \chi_{D_n} (x)
\end{equation}
Then $\{k_n \}_{n \in \mathbb{N} } \in \mathfrak{L}_2(X \times Y, \mathcal{O}(H))$ and  $\|k_n\|_2=\|a\|=1$ ($n \in \mathbb{N}$); $\{ \xi_n \}_{n \in \mathbb{N}} \subset \mathfrak{L}_2(X, H)$ and $\|\xi_n\|=1$ ($n \in \mathbb{N}$). So $\{ \|T_{k_n} \| \}_{n \in \mathbb{N}}$ is bounded. We can complete our proof by showing that $\{T_{\varphi \cdot k_n}\}_{n \in \mathbb{N} }$ has no Cauchy subsequence if the given $\epsilon$ is small enough. By (\ref{equation (5)}),  (\ref{equation 9}), Cauchy-Schwarz inequality and that $\|(T_{\varphi \cdot k_n}-T_{\varphi \cdot k_m})(\xi_n)\| \leq \|T_{\varphi \cdot k_n}-T_{\varphi \cdot k_m}\|$, it is rountine to verify that if $\epsilon < (1/100) \cdot c$ we have
\begin{equation*}\begin{split}
& \text{\Large{$\|$}}(T_{\varphi \cdot k_n}-T_{\varphi \cdot k_m})\text{\Large{$\|$}} \ge \text{\Huge{(}}\int_{D_Y}\text{\Large{$\|$}}\int_{D_n} \frac{1}{\mu (D_n)  \cdot  \upsilon (D_Y)^{\frac{1}{2}}} \vartheta(x,y)(h) d \mu x\text{\Large{$\|$}}^2 d \upsilon\text{\Huge{)}}^{\frac{1}{2}}
\\&  \quad - \text{\Huge{(}}\int_{D_Y}\text{\Large{$\|$}}\int_{D_n} \frac{1}{\mu (D_n)  \cdot  \upsilon (D_Y)^{\frac{1}{2}}}  (\varphi(x,y)(a)- \vartheta (x,y))(h) d \mu x\text{\Large{$\|$}}^2d \upsilon y \text{\Huge{)}}^{\frac{1}{2}}
\\& \ \ \ \ \ \ \ \ \ \ \ \ >  \frac{1}{2} \cdot c,
\end{split}\end{equation*}
our proof is complete.
\end{proof}

\section{Haagerup tensor products and completely compact maps}

Let $K$ be a fixed Hilbert space. We will study the connection between  Haagerup tensor product and completely compact maps.

If $\{A_i\}_{i \in I}$ is a collection of $C^{\ast}$-algebras, we  denote their $C_0$ (or it is called $C^{\ast}$)- direct sum by $\sum_{i \in I}^{\oplus0} A_i$ (see ~\cite{MR936628}).

For a $C^{\ast}$-algebra $A$, we follow Fell and Doran ~\cite{MR936628}  to call $A$ elementary $C^{\ast}$-algebra if $A$ is $\ast$-isomorphic to $\mathcal{O}_c(H)$ for some Hilbert space $H$; on the other hand, we call $A$ compact type $C^{\ast}$-algebra if $A$ is $\ast$-isomorphic to a subalgebra of $\mathcal{O}_c(H)$ for some Hilbert space $H$. If $A$ is compact type $C^{\ast}$-algebra, we shall identify $A=\sum_{i \in I} ^{\oplus 0} \mathcal{O}_c(H_i)$ for some Hilbert spaces  $H_i$ ($i \in I$) (~\cite[Theorem VI.23.3]{MR936628}).

\begin{lemma}\label{inversecompact}
If $A$ is a compact-type $C^{\ast}$-algebra then the following are equivalent:

(i) A is elementary.

(ii) For any $\digamma \in \mathfrak{CCO}(A)$, there are families $\{a_i\}_{i \in J}$ and $\{b_j\}_{j \in J}$ of elements of $A$ such that $\sum_{j \in J} a_i ^{\ast} a_i$ and $\sum_{j \in J} b_j b_j^{\ast}$ are convergent and
\begin{equation*}
\digamma(r)= \sum_{j \in J} b_j \ r \ a_j \ \ \ \ (r \in A).
\end{equation*}
If these conditions hold, we have $\mathfrak{CCO}(A)=A \otimes_h A$.
\end{lemma}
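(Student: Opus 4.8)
The plan is to prove the two implications separately, in each case reducing to the single-summand situation by means of the canonical decomposition $A=\sum_{i\in I}^{\oplus 0}\mathcal{O}_c(H_i)$ valid for a compact-type $C^{\ast}$-algebra; note that $A$ is elementary precisely when this decomposition has a single non-zero summand. For (ii)$\Rightarrow$(i) I argue by contraposition. Suppose $A$ is not elementary, so the decomposition has two non-zero summands $\mathcal{O}_c(H_1)$ and $\mathcal{O}_c(H_2)$; pick rank-one projections $e_1\in\mathcal{O}_c(H_1)$ and $e_2\in\mathcal{O}_c(H_2)$, viewed inside $A$, and define $\digamma\colon A\to A$ by $\digamma(r)=\tau(r)\,e_2$, where $\tau(r)\in\mathbb{C}$ is determined by $e_1re_1=\tau(r)\,e_1$. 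Being of rank one, $\digamma$ is a finite-rank completely bounded, hence completely compact, map. If we had $\digamma(r)=\sum_j b_j\,r\,a_j$ with $\sum_j b_jb_j^{\ast}$ and $\sum_j a_j^{\ast}a_j$ convergent, then, letting $p$ and $q$ be the mutually orthogonal central projections of the multiplier algebra of $A$ onto the first and second summands, we would get for every $r\in A$
\[
q\,\digamma(prp)=\sum_j (qb_jp)\,r\,(pa_j)=0,
\]
since $qb_jp=qpb_j=0$; but $e_1=pe_1p$ while $q\,\digamma(e_1)=q\,e_2=e_2\neq 0$, a contradiction. Hence (ii) forces $A$ to be elementary.

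Conversely, if $A$ is elementary then, transporting along a $\ast$-isomorphism $A\cong\mathcal{O}_c(H)$ (which is a complete isometry and so preserves both $\mathfrak{CCO}$ and $\otimes_h$), it suffices to prove
\[
\mathfrak{CCO}(\mathcal{O}_c(H))=\mathcal{O}_c(H)\otimes_h\mathcal{O}_c(H)
\]
as subspaces of $CB(\mathcal{O}_c(H))$, where $\sum_j b_j\otimes a_j$ is identified with the elementary operator $r\mapsto\sum_j b_jra_j$; by Smith \cite{MR1138841} this identification is an isometry of $\mathcal{O}_c(H)\otimes_h\mathcal{O}_c(H)$ onto a closed subspace of $CB(\mathcal{O}_c(H))$. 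The inclusion ``$\supseteq$'' is routine: finite tensors of finite-rank operators are dense in $\mathcal{O}_c(H)\otimes_h\mathcal{O}_c(H)$ and induce finite-rank completely bounded maps, and the identification is contractive for the Haagerup norm, so every element of $\mathcal{O}_c(H)\otimes_h\mathcal{O}_c(H)$ induces a cb-norm limit of finite-rank completely bounded maps, which is completely compact by \cite[Proposition~3.2]{compactness} (invoke Remark~\ref{hfsadfjklhvuivrioreiu} if one wants to move between $CB(\mathcal{O}_c(H))$ and $CB(\mathcal{O}_c(H),\mathcal{O}(H))$).

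For ``$\subseteq$'', let $\digamma\in\mathfrak{CCO}(\mathcal{O}_c(H))$, let $(Q_\lambda)$ be the upward-directed net of finite-rank projections on $H$, and set $P_\lambda(x)=Q_\lambda xQ_\lambda$; each $P_\lambda$ restricts to a completely contractive, finite-rank map on $\mathcal{O}_c(H)$ with $P_\lambda\to\mathrm{id}$ in the point-norm topology. I claim $\|P_\lambda\circ\digamma-\digamma\|_{cb}\to0$: given $\varepsilon>0$, complete compactness supplies a finite-dimensional $F\subseteq\mathcal{O}_c(H)$ with $\mathrm{dist}(\digamma^{(m)}(x),M_m(F))<\varepsilon$ for every $m$ and every contraction $x\in M_m(\mathcal{O}_c(H))$; since $F$ is finite dimensional, $\|(P_\lambda-\mathrm{id})|_F\|_{cb}\to0$, and splitting $\digamma^{(m)}(x)$ into an element of $M_m(F)$ plus an $\varepsilon$-error (and using $\|P_\lambda^{(m)}-\mathrm{id}\|\le 2$) bounds $\|P_\lambda\circ\digamma-\digamma\|_{cb}$ by a fixed multiple of $\varepsilon$ for $\lambda$ large. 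Thus $\digamma$ is a cb-norm limit of the finite-rank completely bounded maps $P_\lambda\circ\digamma$. Finally, every finite-rank completely bounded map $\mathcal{O}_c(H)\to\mathcal{O}_c(H)$ has the form $r\mapsto\sum_{k=1}^K\mathrm{tr}(r\,t_k)\,c_k$ with each $t_k$ trace class and each $c_k$ of finite rank; expanding the $t_k$ and the $c_k$ into rank-one operators rewrites this as $r\mapsto\sum_\iota b_\iota\,r\,a_\iota$ with $b_\iota,a_\iota\in\mathcal{O}_c(H)$ and $\sum_\iota b_\iota b_\iota^{\ast}$, $\sum_\iota a_\iota^{\ast}a_\iota$ norm convergent, i.e.\ as an element of $\mathcal{O}_c(H)\otimes_h\mathcal{O}_c(H)$. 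Since that subspace is closed in $CB(\mathcal{O}_c(H))$, the limit $\digamma$ lies in it; reading off the tensor produces the families $\{a_j\}$, $\{b_j\}$ required in (ii), and combining the two inclusions gives the stated identity.

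The main obstacle is the inclusion ``$\subseteq$'': one has to approximate $\digamma$ by finite-rank maps \emph{in the completely bounded norm}, uniformly over all amplification levels, which is exactly where complete compactness (as opposed to mere compactness) is needed and where the work of Smith \cite{MR1138841}, Ylinen \cite{MR0296716} and Saar enters, in order to be sure that the tensor obtained genuinely lives in $\mathcal{O}_c(H)\otimes_h\mathcal{O}_c(H)$ rather than in a larger weak-$\ast$ Haagerup tensor product.
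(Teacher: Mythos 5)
Your proof is correct, and both directions land in the same place as the paper's, but each is executed somewhat differently. For (ii)$\Rightarrow$(i) the paper also argues by contradiction from the decomposition $A=\sum_{i\in I}^{\oplus 0}\mathcal{O}_c(H_i)$, but its witness is an elementary operator $\Lambda(r)=(E_{i_1,i_1}(a)+E_{i_1,i_2}(b))\,r\,(E_{i_1,i_1}(a)+E_{i_2,i_1}(c))$ with coefficients in $\mathcal{O}_c(H)$ rather than in $A$, which then has to be checked to map $A$ into $A$ and to restrict to a completely compact map (via Remark \ref{hfsadfjklhvuivrioreiu}); your rank-one map $r\mapsto\tau(r)e_2$ is simpler, is trivially completely compact, and your central-projection computation $qb_jp=0$ is just a cleaner phrasing of the paper's observation that $\phi_v(\mathcal{O}_c(H_i))\subseteq\mathcal{O}_c(H_i)$ for $v\in A\otimes A$. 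For (i)$\Rightarrow$(ii) the paper simply cites Saar \cite[Corollary 3.6]{compactness}, whereas you reprove that result from scratch (approximation by $Q_\lambda\,\cdot\,Q_\lambda$ in cb-norm, plus Smith's isometry \cite{MR1138841} to get closedness of the image of $\mathcal{O}_c(H)\otimes_h\mathcal{O}_c(H)$ in $CB(\mathcal{O}_c(H))$); this buys self-containedness at the cost of length, and the argument is sound. One small inaccuracy: a finite-rank map into $\mathcal{O}_c(H)$ need not have range spanned by finite-rank operators (a one-dimensional space spanned by an infinite-rank compact operator already fails this), so your blanket claim about the form $r\mapsto\sum_k\mathrm{tr}(rt_k)c_k$ with $c_k$ of finite rank is an overstatement; it is harmless here because the maps $P_\lambda\circ\digamma$ you actually apply it to have range inside $Q_\lambda\mathcal{O}_c(H)Q_\lambda$, and in any case the singular-value expansion of a merely compact $c_k$ still yields the required families with $\sum_\iota b_\iota b_\iota^{\ast}$ and $\sum_\iota a_\iota^{\ast}a_\iota$ norm convergent.
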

\begin{proof}
The implication from (i) to (ii) is \cite[Corollary 3.6]{compactness}.

(ii) implies (i) :Let $A= \sum_{i \in I}^{\oplus0} \mathcal{O}_c(H_i)$ for some collection $\{H_i\}_{i \in I}$ of Hilbert spaces, we claim that $I$ is single point.

Let $H=\sum_{i \in I}^{\oplus} H_i$, if we represent any element $r$ of $\mathcal{O}(H)$ by a matrix $(r_{i,j})_{i,j \in I}$, where $r_{i,j} \in \mathcal{O}(H_j, H_i)$, then $s \in A$ is a diagonal matrix such that $s_{i,i} \in \mathcal{O}_c (H_i)$. Furthermore, for any $i,j \in I$ we define a map $E_{i,j}: \mathcal{O}_c(H_j, H_i) \to \mathcal{O}_c(H)$ by the following way: for any $a \in \mathcal{O}_c(H_j, H_i)$, $E_{i,j}(a)$ is the matrix in $\mathcal{O}_c(H)$ whose all entries are 0 except for the $i,j$-th entry, which  is $a$. Now we take two distinct points $i_1, i_2$ in $I$, let $a \in \mathcal{O}_c(H_{i_1})$, $b \in \mathcal{O}_c(H_{i_2}, H_{i_1})$, and $c \in \mathcal{O}_c(H_{i_1}, H_{i_2})$ be all non-zero. We define $\Lambda: \mathcal{O}_c(H) \to \mathcal{O}_c(H)$ by 
\begin{equation*}
\Lambda(r)=(E_{i_1, i_1}(a) + E_{i_1, i_2}(b)) \  r \ (E_{i_1, i_1}(a)+ E_{i_2,i_1}(c)) \ \ \ \ (r \in \mathcal{O}_c(H)),
\end{equation*}
then $\Lambda$ is completely compact by \cite[Corollary 3.6]{compactness}, and it is easy to verify that $\Lambda(A) \subset A$. Let $\digamma: A \to A$ be defined by $\digamma(r)=\Lambda(r)$ $(r \in A)$, by Remark \ref{hfsadfjklhvuivrioreiu} $\digamma$ is completely compact. But $\digamma \neq \phi_v$ for any $v \in A \otimes_h A$  because $\phi_v (\mathcal{O}_c(H_i)) \subset \mathcal{O}_c(H_i)$ for any $v \in A \otimes A$ and $i \in I$. This contradiction proved that $I$ is single point, $A$ is elementary. 
\end{proof}

\begin{theorem}\label{new}
If $A$ is a $C^{\ast}$-algebra, then the following are equivalent:

(i) A is elementary,

(ii) For any $\digamma \in \mathfrak{CCO}(A)$, there are families $\{a_i\}_{J}$ and $\{b_j\}_{j \in J}$ of elements of $A$ such that $\sum_{j \in J} a_i ^{\ast} a_i$ and $\sum_{j \in J} b_j b_j^{\ast}$ are convergent and
\begin{equation*}
\digamma(r)= \sum_{j \in J} b_j \ r \ a_j.
\end{equation*}
If these conditions hold, we have $\mathfrak{CCO}(A)=A \otimes_h A$.
\end{theorem}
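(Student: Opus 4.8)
The plan is to reduce Theorem \ref{new} to Lemma \ref{inversecompact} by showing that condition (ii) already forces $A$ to be a compact-type $C^{\ast}$-algebra, so that the compact-type hypothesis in the lemma is automatic. So the structure is: (i) $\Rightarrow$ (ii) is exactly \cite[Corollary 3.6]{compactness} (or the same implication in Lemma \ref{inversecompact}); and for (ii) $\Rightarrow$ (i) I would first establish that (ii) implies $A \cong \sum_{i}^{\oplus 0} \mathcal{O}_c(H_i)$, and then invoke Lemma \ref{inversecompact} verbatim to conclude $A$ is elementary. The final clause $\mathfrak{CCO}(A) = A \otimes_h A$ then also comes straight from Lemma \ref{inversecompact}.

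First I would argue that if $A$ is \emph{not} of compact type, then (ii) fails, i.e.\ there is a completely compact $\digamma \in \mathfrak{CCO}(A)$ of the two-sided-multiplier form that cannot be realized that way — wait, that is not quite the right contrapositive. The cleaner route: I want to show (ii) $\Rightarrow$ $A$ is compact type. A $C^{\ast}$-algebra is compact type (dual, in Fell--Doran terminology) precisely when every element generates a finite-dimensional-ish ideal structure; more usefully, $A$ is compact type iff $A$ has no representation onto $\mathcal{B}(H)$ with $H$ infinite-dimensional acting essentially, equivalently iff $A^{**}$ is a direct sum of matrix algebras, equivalently iff $A$ contains no copy of a non-compact structure. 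The concrete obstruction I would use: if $A$ is not compact type, then some quotient or subalgebra of $A$ is isomorphic to $\mathcal{B}(K)$ for an infinite-dimensional $K$, or at least $A$ has an infinite-dimensional irreducible representation $\pi$ whose image is not contained in the compacts. On such a piece one can build, using a rank-one partial isometry $v$ and the map $r \mapsto v r v^{*}$ pulled back to $A$, a completely compact map on $A$ which however \emph{does} have the form in (ii) — so that does not contradict (ii) either.

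The correct obstruction is the reverse: I should exhibit, on a non-compact-type $A$, a completely compact map that is \emph{not} of two-sided multiplier form, contradicting (ii). Here I mimic the proof of Lemma \ref{inversecompact}: if $A$ is not compact type, then (by the structure theory of dual $C^{\ast}$-algebras) $A$ either has an infinite-dimensional elementary summand-like piece $\mathcal{O}_c(H)$ with $\dim H = \infty$ on which the Lemma \ref{inversecompact} argument applies, or $A$ surjects onto / contains a non-elementary compact-type algebra $\sum_{i\in I}^{\oplus 0}\mathcal{O}_c(H_i)$ with $|I|\geq 2$, and in either case the explicit $\Lambda$ built from the $E_{i_1,i_1},E_{i_1,i_2},E_{i_2,i_1}$ matrix units (or the analogous rank-one construction) gives a completely compact map whose image fails to respect the block decomposition, hence is not of the form $r\mapsto \sum_j b_j r a_j$ with the stated convergence. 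Once compact type is in hand, Lemma \ref{inversecompact} finishes everything including the identification $\mathfrak{CCO}(A)=A\otimes_h A$.

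The main obstacle I anticipate is precisely the step ``(ii) $\Rightarrow$ $A$ is compact type'': it requires knowing enough $C^{\ast}$-structure theory (dual/CCR/GCR algebras, the Fell--Doran classification) to guarantee that a non-compact-type $A$ always contains the configuration needed to run the matrix-unit counterexample, and to verify that the candidate $\Lambda$ genuinely lies in $\mathfrak{CCO}(A)$ while provably not admitting a representation $\sum_j b_j r a_j$ — the latter needing the observation that any such representation commutes with the central projections onto the blocks $\mathcal{O}_c(H_i)$, exactly as in the last line of the proof of Lemma \ref{inversecompact}. Everything else is bookkeeping that transfers directly from the compact-type case.
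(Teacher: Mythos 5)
Your skeleton --- (i)$\Rightarrow$(ii) by citing \cite[Corollary 3.6]{compactness}, and (ii)$\Rightarrow$(i) by first showing that (ii) forces $A$ to be of compact type and then invoking Lemma \ref{inversecompact} --- is exactly the paper's skeleton, and the reduction to Lemma \ref{inversecompact} is the right move. The gap is in the one step you yourself flag as the obstacle: proving that (ii) implies $A$ is of compact type. Your plan is a contradiction argument: assume $A$ is not of compact type and build, from matrix units $E_{i_1,i_1},E_{i_1,i_2},E_{i_2,i_1}$ attached to a block decomposition, a completely compact map that is not of two-sided multiplier form. But a $C^{\ast}$-algebra that is not of compact type need not contain any such configuration: it can be commutative (e.g.\ $C[0,1]$) or simple and unital, and it admits no decomposition $\sum_{i\in I}^{\oplus 0}\mathcal{O}_c(H_i)$ at all --- that decomposition is available precisely for compact-type algebras, so the construction of Lemma \ref{inversecompact} cannot be ``mimicked'' outside that class. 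Your fallback assertions (that a non-compact-type $A$ ``has an infinite-dimensional elementary summand-like piece'' or ``surjects onto / contains a non-elementary compact-type algebra'') are not theorems, and the dual/CCR/GCR structure theory you appeal to does not supply them.

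The paper closes this step by a different, positive use of the hypothesis rather than by contradiction: from (ii) (read together with the concluding identification $\mathfrak{CCO}(A)=A\otimes_h A$) one gets that every two-sided multiplication $x\mapsto uxu$, $u\in A$, is a compact operator on $A$, and Ylinen's theorem \cite{MR0296716} then produces a faithful $\ast$-representation $\pi$ of $A$ with $\pi(A)\subseteq\mathcal{O}_c(X)$, i.e.\ $A$ is of compact type; Lemma \ref{inversecompact} then finishes the proof exactly as you say. Ylinen's theorem (or some equivalent characterization of compact-type algebras via compactness of the multiplication operators) is the missing ingredient in your proposal; without it, the implication (ii)$\Rightarrow$``compact type'' is not established, and the rest of your argument, though correct, has nothing to stand on.
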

\begin{proof}
The implication from (i) to (ii) is ~\cite[Corollary 3.6]{compactness}.

(ii) implies (i): In particular, for any $u \in A$, the map $x \mapsto uxu$ is compact, by ~\cite{MR0296716} there is a faithful $\ast$-representation $\pi$ of $A$ on Hilbert space $X$ such that $\pi(A) \subset \mathcal{O}_c(X)$,  thus if we identify $A$ with its image in $\mathcal{O}(X)$, we can consider that $A$ is a norm-closed $\ast$-subalgebra of $\mathcal{O}_c(X)$. By Lemma \ref{inversecompact} (i) holds.

\end{proof}

\section{Compactness and Haagerup tensor products}

By the results of  Section 3, the only interesting compact Schur $A$-multipliers are defined on $X, \ Y=\mathbb{N}$, equipped with the counting measure. We assume that $A \subset \mathcal{O}(H)$ for some Hilbert space $H$, and we can drop the assumption that $A$ is separable.

We identify each $T \in \mathcal{O}(H^{\infty})$ with a matrix $(T_{m,n})_{m,n \in \mathbb{N}}$, where $T_{m,n} \in \mathcal{O}(H)$. Furthermore, we define the conditional expectation $\mathcal{E}: \mathcal{O}(\ell^2) \to \ell^{\infty}$ by
\begin{equation*}
\mathcal{E}(S)(n)=S_{n,n},\rm{ \ for \ all \ matrix \ S \in \mathcal{O}(\ell^2} ) .
\end{equation*}
For each $n \in \mathbb{N}$ we define $\mathcal{E}_n: \mathcal{O}(\ell^2) \to \ell^{\infty}$ by
\begin{equation*}\begin{split}
&\mathcal{E}_n(S)(k)=S_{k,k}\ \ \ \ (k \leq n);
\\& \mathcal{E}_n(S)(k)=0 \ \ \ \ \ (k > n) 
\end{split}\end{equation*}
($\rm{ for \ all \ matrix \ S \in \mathcal{O}(\ell^2} )$). Since $\ell^2$ is commutative $C^{\ast}$-algebra, $\mathcal{E}$ is completely bounded. Therefore the action of Schur $A$- multiplier $\varphi$ on $\mathcal{O}_c(\ell^2(\mathbb{N})) \otimes A$ can be regarded with
\begin{equation*}\begin{split}
S_{\varphi} & :  \mathcal{O}_c(\ell^2(\mathbb{N})) \otimes A \rightarrow \mathcal{O}_c(\ell^2(\mathbb{N})) \otimes \mathcal{O}(H)
\\& : (T_{m,n})_{m,n \in \mathbb{N}} \mapsto (\varphi (n,m) (T_{m,n}))_{m,n \in \mathbb{N}}.
\end{split}\end{equation*}

\bigskip

\begin{lemma}\label{matrix product}
Let $S_{\varphi}$ be a $Schur \ A $-multiplier. If there exist an index set $J$, and families of $\{R_i\}_{i \in J}$ and $\{S_i\}_{i \in J}$ $\subset \mathcal{O}(H^{\infty})$ such that $\sum_{i \in J} R_i R_i^{\ast}$ and $\sum_{i \in J} S_i^{\ast}S_i$ are convergent, and
\begin{equation}\label{1841702}
S_{\varphi}(T)=  \sum_{i \in J} R_i T S_i, \ T \in \mathcal{O}_c(\ell^2({\mathbb{N}})) \otimes A,
\end{equation}
 then $\sum_{i \in J} \mathcal{E}(R_i) \mathcal{E}(R_i)^{\ast}$ and $\sum_{i \in J}\mathcal{E}(S_i)^{\ast}\mathcal{E}(S_i)$ are convergent and
\begin{equation}\label{1841701}
S_{\varphi} (T) = \sum_{i \in J} \mathcal{E}(R_i) T \mathcal{E}(S_i),  \ T \in \mathcal{O}_c(\ell^2({\mathbb{N}})) \otimes A.
\end{equation}
\end{lemma}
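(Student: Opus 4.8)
The plan is to show that applying the conditional expectation $\mathcal{E}$ entrywise on both sides of the representation \eqref{1841702} preserves the action of $S_\varphi$, because $S_\varphi$ already commutes with the diagonal truncation. The key observation is that a Schur $A$-multiplier only ``sees'' the entries of $T$ through the formula $(T_{m,n}) \mapsto (\varphi(n,m)(T_{m,n}))$, so $S_\varphi$ intertwines the obvious diagonal/off-diagonal decomposition of $\mathcal{O}_c(\ell^2)\otimes A$. More precisely, I would first record that for each $n$ the map $E_{n,n}\colon T \mapsto E_{n,n} T E_{n,n}$ (compression to the $(n,n)$ matrix block, extended by zero) is completely contractive on $\mathcal{O}_c(\ell^2)\otimes\mathcal{O}(H)$, and that $S_\varphi(E_{n,n} T E_{n,n}) = E_{n,n} S_\varphi(T) E_{n,n}$ for all $T$; this is immediate from the entrywise description of $S_\varphi$ above. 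Summing over a finite set of indices $n \le N$ gives $S_\varphi(P_N T P_N) = P_N S_\varphi(T) P_N$ where $P_N$ is the projection onto the first $N$ coordinates, hence (passing to the limit in the point-norm topology, using $\|P_N T P_N - T\| \to 0$ on the space $\mathcal{O}_c$ of compact operators and boundedness of $S_\varphi$) the map $T \mapsto \mathcal{E}\text{-compression of } S_\varphi(T)$ agrees with the diagonal part of $S_\varphi(T)$.

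Next I would feed the hypothesised representation $S_\varphi(T) = \sum_{i\in J} R_i T S_i$ into this. Applying the diagonal-compression identity, and using that $\mathcal{E}$ (applied entrywise, i.e. the natural extension $\mathcal{E}\otimes\mathrm{id}$) is completely bounded on $\mathcal{O}(\ell^2)\otimes\mathcal{O}(H)$ since $\ell^2$ as a $C^*$-algebra is commutative, one gets
\begin{equation*}
S_\varphi(T) = (\mathcal{E}\otimes\id)\Bigl(\sum_{i\in J} R_i T S_i\Bigr) = \sum_{i\in J} (\mathcal{E}\otimes\id)(R_i T S_i),
\end{equation*}
with the interchange of $\mathcal{E}$ and the (convergent, by the assumed convergence of $\sum R_i R_i^\ast$ and $\sum S_i^\ast S_i$) sum justified by complete boundedness and normality of $\mathcal{E}$. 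Then the main computational point is the identity $(\mathcal{E}\otimes\id)(R_i T S_i) = \mathcal{E}(R_i)\, T\, \mathcal{E}(S_i)$ whenever $T$ is a diagonal element, i.e. $T \in \ell^\infty \otimes A$ viewed inside $\mathcal{O}(\ell^2)\otimes A$: the $(k,k)$ entry of $R_i T S_i$ is $\sum_{j} (R_i)_{k,j} T_{j,j} (S_i)_{j,k}$, but after also compressing the outer answer to the diagonal one should carefully check one only needs the contribution where the relevant indices match, giving $(R_i)_{k,k} T_{k,k} (S_i)_{k,k} = \mathcal{E}(R_i)_k T_{k,k} \mathcal{E}(S_i)_k$. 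Since $S_\varphi(T)$ for general $T$ is determined by its values on such diagonal $T$ composed with the fact that $S_\varphi$ is itself diagonal-to-diagonal on diagonal inputs --- actually more cleanly: both sides of \eqref{1841701} and \eqref{1841702}, after diagonal compression, agree, and $S_\varphi(T)$ is recovered from $S_\varphi(P_N T P_N)$ which in turn is a finite sum of $E_{k,k} S_\varphi(T) E_{k,k}$ --- one concludes \eqref{1841701} on all of $\mathcal{O}_c(\ell^2)\otimes A$. The convergence of $\sum \mathcal{E}(R_i)\mathcal{E}(R_i)^\ast$ and $\sum \mathcal{E}(S_i)^\ast\mathcal{E}(S_i)$ follows from $\mathcal{E}$ being a conditional expectation (completely positive, contractive): $\sum_{i\in F} \mathcal{E}(R_i)\mathcal{E}(R_i)^\ast \le \sum_{i\in F}\mathcal{E}(R_i R_i^\ast) = \mathcal{E}\bigl(\sum_{i\in F} R_i R_i^\ast\bigr) \le \mathcal{E}\bigl(\sum_{i\in J} R_i R_i^\ast\bigr)$, so the net of partial sums is increasing and bounded, hence convergent, and similarly for the $S_i$ using $\mathcal{E}(S_i)^\ast\mathcal{E}(S_i) \le \mathcal{E}(S_i^\ast S_i)$.

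I would organise the write-up as: (1) the entrywise description shows $S_\varphi \circ \Ad P_N = \Ad P_N \circ S_\varphi$ and more finely commutes with the diagonal projection; (2) hence $S_\varphi(T)$ lies in the diagonal and equals $(\mathcal{E}\otimes\id)(S_\varphi(T))$; (3) substitute the representation and push $\mathcal{E}\otimes\id$ through the sum using complete boundedness/normality of $\mathcal{E}$ and the given convergence; (4) compute $(\mathcal{E}\otimes\id)(R_i T S_i)$ on diagonal $T$, reducing to $\mathcal{E}(R_i) T \mathcal{E}(S_i)$; (5) deduce the convergence of the new coefficient sums from complete positivity of $\mathcal{E}$. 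The main obstacle I anticipate is step (4) together with the fact that $\mathcal{E}$ is only a conditional expectation onto $\ell^\infty$ and not an algebra homomorphism: one must be careful that the identity $(\mathcal{E}\otimes\id)(R_i T S_i) = \mathcal{E}(R_i)\,T\,\mathcal{E}(S_i)$ genuinely uses that $T$ is already diagonal (for general $T$ it is false, since off-diagonal entries of $R_i$ and $S_i$ would contribute), so the argument must first establish that $S_\varphi(T)$ depends only on the diagonal of... no: rather, it must establish that in \eqref{1841702} we may replace $T$ by $P_N T P_N$ and then the whole expression is built from diagonal blocks. A clean way to avoid any subtlety is to test \eqref{1841702} against rank-one ``matrix units'' $E_{k,k}\otimes a$ for $a \in A$, for which both the left side ($S_\varphi(E_{k,k}\otimes a) = E_{k,k}\otimes \varphi(k,k)(a)$) and the right side compute transparently, and then use linearity, density of such elements in $\mathcal{O}_c(\ell^2)\otimes A$ together with the diagonal-compression reduction, to upgrade to all $T$; the convergence statements are then the genuinely new content and follow as in step (5).
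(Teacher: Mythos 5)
There is a genuine gap: your argument is organised entirely around the diagonal of $\mathcal{O}_c(\ell^2)\otimes A$, but the content of the lemma lives at the off-diagonal entries, and several of your intermediate claims are false as stated. First, step (2) (``$S_\varphi(T)$ lies in the diagonal and equals $(\mathcal{E}\otimes\id)(S_\varphi(T))$'') fails for general $T$: for $T=E_{1,2}(a)$ one has $S_\varphi(T)=E_{1,2}(\varphi(2,1)(a))$, which is off-diagonal. Second, the ``main computational point'' $(\mathcal{E}\otimes\id)(R_iTS_i)=\mathcal{E}(R_i)T\mathcal{E}(S_i)$ is false for an individual $i$ even when $T$ is diagonal: the $(k,k)$ entry of $R_iTS_i$ is $\sum_j(R_i)_{k,j}T_{j,j}(S_i)_{j,k}$, and the cross terms $j\neq k$ do not vanish termwise; the check you defer (``one only needs the contribution where the relevant indices match'') cannot be carried out at that level, since any cancellation happens only in the sum over $i$ and only because of the hypothesis \eqref{1841702}. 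Third, your proposed ``clean way'' tests only the diagonal matrix units $E_{k,k}\otimes a$; their linear span is not dense in $\mathcal{O}_c(\ell^2)\otimes A$, and testing them yields only $\varphi(k,k)(a)=\sum_i(R_i)_{k,k}\,a\,(S_i)_{k,k}$, never the identity $\varphi(q,p)(a)=\sum_i(R_i)_{p,p}\,a\,(S_i)_{q,q}$ for $p\neq q$, which is exactly what \eqref{1841701} asserts. Relatedly, $P_NTP_N$ is not a sum of diagonal compressions $E_{k,k}TE_{k,k}$ but of all $E_{k,k}TE_{l,l}$ with $k,l\le N$, so the claimed recovery of $S_\varphi(T)$ from its diagonal part does not work.

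The repair is the paper's argument, which your last sentences almost reach: test \eqref{1841702} against \emph{all} matrix units $E_{p,q}(a)$, $p,q\in\mathbb{N}$, $a\in A$, whose span is dense. Since $S_\varphi$ acts entrywise, $S_\varphi(E_{p,q}(a))=E_{p,q}(\varphi(q,p)(a))$ is supported at the single entry $(p,q)$; compressing the hypothesised representation $\sum_iR_iE_{p,q}(a)S_i$ to that entry picks out $\sum_i(R_i)_{p,p}\,a\,(S_i)_{q,q}$, which involves only the diagonals of $R_i$ and $S_i$ and coincides with the $(p,q)$ entry of $\sum_i\mathcal{E}(R_i)E_{p,q}(a)\mathcal{E}(S_i)$; the remaining entries of the latter vanish automatically. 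Linearity, density and boundedness then give \eqref{1841701}. Your derivation of the convergence of $\sum_i\mathcal{E}(R_i)\mathcal{E}(R_i)^{\ast}$ and $\sum_i\mathcal{E}(S_i)^{\ast}\mathcal{E}(S_i)$ from the Schwarz inequality for the completely positive contraction $\mathcal{E}$ is fine (phrase it as a norm-Cauchy estimate on partial sums, $\sum_{i\in F}\mathcal{E}(R_i)\mathcal{E}(R_i)^{\ast}\le\mathcal{E}\bigl(\sum_{i\in F}R_iR_i^{\ast}\bigr)$, rather than via boundedness of an increasing net, which would only give strong convergence).
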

\begin{proof}
The convergence of  $\sum_{i \in J} \mathcal{E}(R_i)\mathcal{E}(R_i)^{\ast}$ and $\sum_{i \in J} \mathcal{E}(S_i)^{\ast}\mathcal{E}(S_i)$  is an easy convergence of  $\sum_{i \in J} R_i R_i^{\ast}$ and $\sum_{i \in J} S_i^{\ast}S_i$ .
Let $R_i=(a^{(i)}_{m,n})_{m,n \in \mathbb{N}}$, $S_i=(b^{(i)}_{m,n})_{m,n \in \mathbb{N}}$, where $a^{(i)}_{m,n}, b^{(i)}_{m,n} \in \mathcal{O}(H)$. Since $S_{\varphi}$ is linear and continuous, the linear span of $\{E_{p,q}(a): a \in A; p,q \in \mathbb{N}\}$ is norm-dense in $\mathcal{O}_c(\ell^2({\mathbb{N}})) \otimes A$ (here we recall that $E_{p,q}(a)$ is the matrix whose entries are all 0 but $p,q$-th entry is $a$), thus in oder to verify (\ref{1841701}), it is sufficient to verify that it holds for $E_{p,q}(a)$ for any $a \in A$ and $p,q \in \mathbb{N}$. By  (\ref{1841702}) we have ($\cdot$ is the multiplication of matice)
\begin{equation}\begin{split}\label{19011701}
&e_{p,q} \otimes 1_{\mathcal{O}(H)} \cdot (S_{\varphi}(E_{p,q}(a)) \cdot e_{p,q} \otimes 1_{\mathcal{O}(H)}= E_{p,q}(\sum_{i \in J} a^{(i)}_{p,p} a b^{(i)}_{p,q}).
\end{split}\end{equation}
On the other hand we have 
\begin{equation}\label{19011702}
e_{p'',q''} \otimes 1_{\mathcal{O}(H)} \cdot S_{\varphi}(E_{p,q}(a)) \cdot e_{p',q'} \otimes 1_{\mathcal{O}(H)}=0
\end{equation}
for all $p'' \rm{or} \ q''\neq p$, or $p' \rm{or} \ q' \neq q$. Now (\ref{19011701}) and (\ref{19011702}) imply that
\begin{equation*}\begin{split}
S_{\varphi}(E_{p,q}(a))=\sum_{i \in J} \mathcal{E}(R_i) \ E_{p,q}(a) \  \mathcal{E}(S_i)
\end{split}\end{equation*}
Our proof is complete.
\end{proof}

\bigskip

In the sequel part, we use symbol $\mathfrak{CS} (A, \mathcal{O}(H))$ $(resp. \mathfrak{CCS}(A,\mathcal{O}(H)))$ to denote the set of compact $(resp. completly \ compact)$ Schur $A$-multiplier. Furthermore, we define $\mathfrak{CS}(A)=\mathfrak{CS}(A,A) (resp. \mathfrak{CCS}(A)=\mathfrak{CCS}(A,A)) $.

\begin{remark}\label{usefulkey}
Recall the discussion in section 1.2, we have
\begin{equation*}
\mathfrak{CO} (A, \mathcal{O}(H)) \subset \mathfrak{CS} (A, \mathcal{O}(H)),
\end{equation*}
and
\begin{equation*}
\mathfrak{CCO}(A,\mathcal{O}(H)) \subset \mathfrak{CCS}(A,\mathcal{O}(H)),
\end{equation*}
\end{remark}

\begin{proposition}\label{maintheorem}
$\mathfrak{CCS}(\mathcal{O}_c(H))=c_0(\mathbb{N}, \mathcal{O}_c(H)) \otimes_h c_0(\mathbb{N}, \mathcal{O}_c(H))$
\end{proposition}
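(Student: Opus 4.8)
The plan is to prove the two inclusions separately, using the characterization of completely compact maps on elementary $C^*$-algebras from Theorem \ref{new} together with Lemma \ref{matrix product}. Throughout write $K=\mathcal{O}_c(H)$ and identify $A=K$ in the Schur $A$-multiplier setting, so that a Schur $K$-multiplier $\varphi$ on $\mathbb{N}\times\mathbb{N}$ is identified with the map $S_\varphi\colon \mathcal{O}_c(\ell^2)\otimes K\to\mathcal{O}_c(\ell^2)\otimes\mathcal{O}(H)$, $(T_{m,n})\mapsto(\varphi(n,m)(T_{m,n}))$. Note that $\mathcal{O}_c(\ell^2)\otimes K\cong\mathcal{O}_c(\ell^2(\mathbb{N},H))=\mathcal{O}_c(H^\infty)$ and similarly $c_0(\mathbb{N},K)\otimes_h c_0(\mathbb{N},K)$ should first be unwound: an element $v=\sum_i R_i\otimes S_i$ of this Haagerup tensor product (with $R_i,S_i\in c_0(\mathbb{N},K)$, the column/row sums $\sum R_iR_i^*$, $\sum S_i^*S_i$ convergent) induces the multiplier $\phi_v\colon T\mapsto\sum_i R_iTS_i$; I would begin with a lemma identifying $c_0(\mathbb{N},K)\otimes_h c_0(\mathbb{N},K)$ with the space of such $\phi_v$, and checking each $\phi_v$ is indeed a completely compact Schur $K$-multiplier (it is a $\|\cdot\|_{cb}$-limit of finite sums, each of which has finite rank image inside $\mathcal{O}_c(\ell^2)\otimes K$, hence completely compact by the fact recalled after Definition 2.5, and it is a Schur multiplier by Remark \ref{usefulkey} since it lands in $\mathcal{O}_c$).

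For the inclusion $c_0(\mathbb{N},K)\otimes_h c_0(\mathbb{N},K)\subseteq\mathfrak{CCS}(K)$, the previous paragraph's lemma does most of the work: I must also check that $\phi_v$ actually maps $\mathcal{O}_c(\ell^2)\otimes K$ into itself (not merely into $\mathcal{O}_c(\ell^2)\otimes\mathcal{O}(H)$), which follows because $R_i,S_i$ have entries in $K=\mathcal{O}_c(H)$ and the convergent products keep us inside the ideal, and then complete compactness is inherited from the finite-rank approximation. For the reverse inclusion $\mathfrak{CCS}(K)\subseteq c_0(\mathbb{N},K)\otimes_h c_0(\mathbb{N},K)$, start with $S_\varphi\in\mathfrak{CCS}(K)$, regarded as a completely compact map on $\mathcal{O}_c(H^\infty)$. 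Since $\mathcal{O}_c(H^\infty)$ is an elementary $C^*$-algebra, Theorem \ref{new} applies: there are families $\{R_i\}$, $\{S_i\}\subset\mathcal{O}(H^\infty)$ (the multiplier algebra elements, coming from the representation of $\mathfrak{CCO}$) with $\sum R_iR_i^*$, $\sum S_i^*S_i$ convergent and $S_\varphi(T)=\sum_i R_iTS_i$. Now invoke Lemma \ref{matrix product} to replace $R_i$, $S_i$ by their diagonal compressions $\mathcal{E}(R_i)$, $\mathcal{E}(S_i)$: these are diagonal matrices, i.e. elements of $\ell^\infty(\mathbb{N},\mathcal{O}(H))$, still with convergent column/row sums and still representing $S_\varphi$. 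The diagonal entries, being coordinates of elements with $\sum\mathcal{E}(R_i)\mathcal{E}(R_i)^*<\infty$, have norms tending to $0$ along each coordinate — more precisely each $\mathcal{E}(R_i)$ lies in $c_0(\mathbb{N},\mathcal{O}(H))$; one then has to upgrade $\mathcal{O}(H)$ to $\mathcal{O}_c(H)=K$ in the entries, which is where one uses that $S_\varphi$ is valued in $K$ together with a standard cutting-down argument (replace each $R_i$ by $PR_i$, $S_i$ by $S_iP$ for finite-rank $P$'s, or observe directly that the diagonal of $R_i$ may be taken in $K$). This exhibits $S_\varphi$ as $\phi_v$ for $v=\sum_i\mathcal{E}(R_i)\otimes\mathcal{E}(S_i)\in c_0(\mathbb{N},K)\otimes_h c_0(\mathbb{N},K)$.

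I expect the main obstacle to be the last point: passing from the representation $S_\varphi(T)=\sum_i\mathcal{E}(R_i)T\mathcal{E}(S_i)$ with $\mathcal{E}(R_i),\mathcal{E}(S_i)$ a priori only in $\ell^\infty(\mathbb{N},\mathcal{O}(H))$ to genuine elements of $c_0(\mathbb{N},K)$, and identifying the resulting object correctly as a Haagerup tensor in $c_0(\mathbb{N},K)\otimes_h c_0(\mathbb{N},K)$ rather than in a larger space such as $\ell^\infty(\mathbb{N},\mathcal{O}(H))\otimes_h\ell^\infty(\mathbb{N},\mathcal{O}(H))$. The $c_0$-decay in the index $n\in\mathbb{N}$ should come from Ylinen's theorem / Theorem \ref{new} applied fibrewise or from compactness of $S_\varphi$ restricted to corners $e_{n,n}\otimes K$; the fact that the entries land in $K$ rather than $\mathcal{O}(H)$ should follow since each $\varphi(n,m)$ maps $K$ into $K$ (as $S_\varphi$ takes values in $\mathcal{O}_c(\ell^2)\otimes K$) and because the outer factors $R_i,S_i$ can be harmlessly compressed by finite-rank projections on $H$ without changing $S_\varphi$ on $\mathcal{O}_c(\ell^2)\otimes K$. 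Once these bookkeeping issues are settled, the equality of the two spaces — and the fact that the identification is isometric for the cb-norm — follows formally from Lemma \ref{negativecriteria}-type arguments and the definition of the Haagerup norm.
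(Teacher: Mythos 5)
Your overall strategy coincides with the paper's: represent the completely compact map $S_\varphi$ on the elementary $C^*$-algebra $\mathcal{O}_c(H^\infty)$ in the form $T\mapsto\sum_i R_iTS_i$, pass to the diagonals via Lemma \ref{matrix product}, and read off an element of $c_0(\mathbb{N},\mathcal{O}_c(H))\otimes_h c_0(\mathbb{N},\mathcal{O}_c(H))$; the converse inclusion is handled the same way in both arguments. However, there is a genuine gap at exactly the step you flag as ``the main obstacle'', and the fixes you sketch do not close it. You take the families $\{R_i\},\{S_i\}$ only in the multiplier algebra $\mathcal{O}(H^\infty)$, so the diagonals $\mathcal{E}(R_i),\mathcal{E}(S_i)$ a priori lie in $\ell^\infty(\mathbb{N},\mathcal{O}(H))$, and you propose to upgrade to $c_0(\mathbb{N},\mathcal{O}_c(H))$ using the facts that $S_\varphi$ is valued in $\mathcal{O}_c(\ell^2)\otimes\mathcal{O}_c(H)$ and that the $R_i,S_i$ can be ``harmlessly compressed'' by finite-rank projections. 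Neither suffices: from $\sum_i(R_i)_{n,n}\,x\,(S_i)_{m,m}\in\mathcal{O}_c(H)$ for all $x$ one cannot conclude that the individual entries $(R_i)_{n,n}$ are compact, let alone that they tend to $0$ in $n$; and replacing $R_i$ by $PR_i$ with $P$ finite rank changes $S_\varphi$, so one would need a limiting argument controlled in the Haagerup norm, which you do not supply.

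The resolution is already in the statement you cite but do not use at full strength: Theorem \ref{new} (equivalently Lemma \ref{inversecompact}, i.e.\ \cite[Corollary 3.6]{compactness}) produces coefficient families from $A$ \emph{itself}, so here $R_i,S_i\in\mathcal{O}_c(H^\infty)$ are compact. Then each diagonal block sequence $((R_i)_{n,n})_{n}$ automatically consists of compact operators tending to $0$ in norm, i.e.\ $\mathcal{E}(R_i)\in c_0(\mathbb{N},\mathcal{O}_c(H))$, and your obstacle disappears. Two further slips: the isometric identification $\|S_\varphi\|_{cb}=\|v\|_h$ comes from Smith's theorem \cite{MR1138841}, not from ``Lemma \ref{negativecriteria}-type arguments'' (that lemma concerns propagating failure of complete compactness and plays no role here); and in the easy inclusion the finite partial sums $T\mapsto\sum_{i=1}^N R_iTS_i$ do \emph{not} have finite-rank image --- their complete compactness comes from approximating the compact coefficients $R_i,S_i$ by finite-rank operators in cb-norm, or directly from \cite[Corollary 3.6]{compactness}.
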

\begin{proof}
Suppose $\varphi$ is Schur $\mathcal{O}_c(H)$-multiplier. Each $T \in \mathcal{O}_c(\ell^2(\mathbb{N})) \otimes \mathcal{O}_c(H)$ may be identified with a matrix $(T_{m,n})_{m,n \in \mathbb{N}}$, where $T_{m,n} \in \mathcal{O}_c(H)$ for all $m,n \in \mathbb{N}$. By Lemma \ref{inversecompact} there exist an index set $J$ and $\{S_i \}_{i \in J} \subset \mathcal{O}_c(H^{\infty})$, $\{R_i \}_{i \in J} \subset \mathcal{O}_c(H^{\infty})$ such that $\sum_{ i \in J} R_i R^{\ast}_{i \in J}$ and $\sum_{i \in J}S_i^{\ast}S_i$ converge uniformly and
\begin{equation*}
S_{\varphi}(T)=\sum_{i \in J}R_i T S_i, T \in \mathcal{O}_c(H^{\infty}).
\end{equation*}
By Lemma \ref{matrix product} we have
\begin{equation}\label{diagoanl form of multiplier}
S_{\varphi}(T)= \sum_{i \in J} \mathcal{E}(R_i) T \mathcal{E}(S_i),  \ T \in \mathcal{O}_c(\ell^2(\mathbb{N})) \otimes A.
\end{equation}
Now it is easy to verify that $\{\mathcal{E}(R_i)\}_{i \in J}$, $\{\mathcal{E}(S_i)\}_{i \in J}$ are collections of compact operators, $\sum_{i \in J} \mathcal{E}(R_i) \mathcal{E}(R_i)^{\ast}$ and $\sum_{i \in J} \mathcal{E}(S_i)^{\ast} \mathcal{E}(S_i)$ converge in norm. By \cite{MR1138841}, let $v=\sum_{i \in J}\mathcal{E}(R_i) \otimes \mathcal{E}(S_i) \in c_0(\mathbb{N}, \mathcal{O}_c(H)) \otimes_h c_0(\mathbb{N}, \mathcal{O}_c(H))$, we have $\|S_{\varphi}\|_{cb}$=$\|v\|_h$.

Conversely,  for any $v \in c_0(\mathbb{N},\mathcal{O}_c(H)) \otimes_h c_0(\mathbb{N},\mathcal{O}_c(H))$, there are $\{R'_k\}_{k \in \mathbb{N}},\{S'_k\}_{k \in \mathbb{N}}$ in $\mathcal{O}_c(H)$ such that $v=\sum_{k=1}^{\infty}R'_k \otimes S'_k$ and $\text{\Large{$\|$}}\sum_{k=1}^{\infty} R'_k {R'_k}^{\ast}\text{\Large{$\|$}} < + \infty$, $\text{\Large{$\|$}}\sum_{k=1}^{\infty}{S'_k}^{\ast}S'_k\text{\Large{$\|$}}< + \infty$, then
\begin{equation*}
(\phi_v)|(\mathcal{O}_c(H^{\infty})) , T \mapsto \sum_{k=1}^{\infty} R'_k T S'_k
\end{equation*}
is a completely compact map on $\mathcal{O}_c(H^{\infty})$, and it is easy to see that there is Schur $\mathcal{O}_c(H)$-multiplier $\varphi$ such that $S_{\varphi}=\phi_v$.  Therefore the map
\begin{equation*}
C_0(\mathbb{N},\mathcal{O}_c(H)) \otimes_h C_0(\mathbb{N},\mathcal{O}_c(H)) \to \mathfrak{CCS}(\mathcal{O}_c(H(H))), v \mapsto (\phi_v)|\mathcal{O}_c(H^{\infty})
\end{equation*}
is linear isometry with range $\mathfrak{CCS}(\mathcal{O}_c(H))$. Our proof is complete.
\end{proof}

\begin{theorem}\label{final1}
If $A$ is $C^{\ast}$-algebra, then the following two conditions are equivalent:

(I)$A$ is elementary;

(II) $\mathfrak{CCS}(A)=c_0(\mathbb{N}, A) \otimes_h c_0(\mathbb{N}, A)$.

If these conditions hold, Schur $A$-multiplier $\varphi$ is completely compact if and only if there are index set $J$, $\{a^i_k\}_{i \in J, k \in \mathbb{N}}$ and $\{b^i_k\}_{i \in J, k \in \mathbb{N}}$ $\subset A$ such that : 

(i) \, $\sum _i a^i_k (a^i_k)^{\ast}$ and $\sum_i (b^i_k)^{\ast} b^i_k $ are convergent in the norm of $A$ for each $k \in \mathbb{N}$, and
\begin{equation*}
\sum_{i \in J} a_k^i(a_k^i)^{\ast}, \sum_{i \in J}(b_k^i)^{\ast}b_k^i \to   0  \ \ {\rm{if}} \ \ k \to \infty 
\end{equation*}

(ii) for any $m,n \in \mathbb{N}$, 
\begin{equation}\label{equation of multiplier}
\varphi(m,n)( x)= \sum_i a^i_n \, x \, b^i_m \ \ \ \ ( x \in A).
\end{equation}
\end{theorem}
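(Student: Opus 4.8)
The plan is to establish (I)$\Leftrightarrow$(II) by reducing both directions to what has already been proved, and then extract the concrete matrix-free description from the Haagerup tensor identity. For the implication (I)$\Rightarrow$(II), suppose $A$ is elementary, say $A \cong \mathcal{O}_c(H)$. Then I would invoke Proposition \ref{maintheorem}, which already gives $\mathfrak{CCS}(\mathcal{O}_c(H)) = c_0(\mathbb{N},\mathcal{O}_c(H)) \otimes_h c_0(\mathbb{N},\mathcal{O}_c(H))$; transporting through the $\ast$-isomorphism $A \cong \mathcal{O}_c(H)$ (which is completely isometric, hence respects both the Haagerup norm and complete compactness of the induced Schur multipliers) yields (II). The only care needed here is to check that the $\ast$-isomorphism $A \to \mathcal{O}_c(H)$ induces a completely isometric isomorphism $c_0(\mathbb{N},A) \to c_0(\mathbb{N},\mathcal{O}_c(H))$ and intertwines the Schur-multiplier actions on $\mathcal{O}_c(\ell^2(\mathbb{N})) \otimes A$ and $\mathcal{O}_c(\ell^2(\mathbb{N})) \otimes \mathcal{O}_c(H)$, which is routine functoriality of the minimal and Haagerup tensor products.

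For the implication (II)$\Rightarrow$(I), the strategy is to feed (II) into Theorem \ref{new}, whose hypothesis (ii) is precisely a factorization statement for arbitrary $\digamma \in \mathfrak{CCO}(A)$. So I would take an arbitrary completely compact map $\digamma : A \to A$ and produce from it a completely compact Schur $A$-multiplier: using Remark \ref{usefulkey} we already know $\mathfrak{CCO}(A,\mathcal{O}(H)) \subset \mathfrak{CCS}(A,\mathcal{O}(H))$, and more precisely the map $T = (T_{m,n}) \mapsto (\digamma(T_{m,n}))$ — i.e. the "constant" Schur multiplier $\varphi(m,n) = \digamma$ for all $m,n$ — is a completely compact Schur $A$-multiplier whenever $\digamma$ is completely compact on $A$. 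By (II), this multiplier corresponds to an element $v = \sum_k R'_k \otimes S'_k \in c_0(\mathbb{N},A) \otimes_h c_0(\mathbb{N},A)$; evaluating the resulting diagonal formula $S_\varphi(T) = \sum_k \mathcal{E}(R'_k) T \mathcal{E}(S'_k)$ on the single corner $E_{1,1}(r)$ and reading off the $(1,1)$-entry recovers a representation $\digamma(r) = \sum_k b_k\, r\, a_k$ with the required convergence of $\sum b_k b_k^\ast$ and $\sum a_k^\ast a_k$ (extracting the appropriate scalar entries of $R'_k, S'_k$). This verifies condition (ii) of Theorem \ref{new}, whence $A$ is elementary.

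For the final "if and only if" characterization, assume (I)/(II) hold and unwind the isometry $\mathfrak{CCS}(A) \cong c_0(\mathbb{N},A) \otimes_h c_0(\mathbb{N},A)$. A completely compact Schur $A$-multiplier $\varphi$ corresponds to $v = \sum_{k} R_k \otimes S_k$ in the Haagerup tensor product, where $R_k = (a_k^i)_i$ and $S_k = (b_k^i)_i$ are elements of $c_0(\mathbb{N},A)$ — that is, sequences (indexed by the Haagerup "inner" index $i \in J$) of elements of $A$ that tend to $0$ in the $k$-direction in the sense that $\sum_i a_k^i (a_k^i)^\ast \to 0$ and $\sum_i (b_k^i)^\ast b_k^i \to 0$ as $k \to \infty$, which is exactly (i). Then the diagonal formula from Lemma \ref{matrix product}, applied to corners $E_{m,n}(x)$, gives $\varphi(m,n)(x) = \sum_i a_n^i\, x\, b_m^i$, which is (ii). Conversely, given families satisfying (i) and (ii), the same element $v$ lies in $c_0(\mathbb{N},A) \otimes_h c_0(\mathbb{N},A)$ and, by the isometry of (II) read backwards (as in the converse half of Proposition \ref{maintheorem}), defines a completely compact Schur $A$-multiplier whose symbol is $\varphi$.

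\textbf{Main obstacle.} The substantive point is the bookkeeping in (II)$\Rightarrow$(I): one must be careful that the factorization $\digamma(r) = \sum_k b_k\, r\, a_k$ extracted from the diagonal Schur-multiplier formula genuinely has $\{a_k\}, \{b_k\} \subset A$ (not merely in $\mathcal{O}(H)$) with the stated operator-norm convergence of $\sum a_k^\ast a_k$ and $\sum b_k b_k^\ast$, so that Theorem \ref{new}(ii) applies verbatim; this requires tracking which scalar/corner entries of the $c_0(\mathbb{N},A)$-valued Haagerup representatives survive after compressing to $E_{1,1}$, and confirming they inherit convergence from the $c_0$-valued Haagerup norm. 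Everything else is functoriality of tensor products and re-reading Lemma \ref{matrix product} and Proposition \ref{maintheorem}.
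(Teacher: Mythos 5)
Your proposal is correct and follows essentially the same route as the paper: Proposition \ref{maintheorem} (transported through the $\ast$-isomorphism) for (I)$\Rightarrow$(II), the combination of Remark \ref{usefulkey} with Theorem \ref{new} for (II)$\Rightarrow$(I), and unwinding the diagonal Haagerup representation from Lemma \ref{matrix product} for the final characterization. The paper's own proof is terser but identical in structure; your extra care about extracting the corner entries in $A$ and their norm convergence is the right point to flag and is handled correctly.
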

\begin{proof}
The implication from $(I)$ to $(II)$ was proved in the previous proposition.

$(II)$ implies $(I)$: This is the combination of Remark \ref{usefulkey} and Proposition \ref{new}.

Therefore if $(I)$ or $(II)$ holds, we may identify $A=\mathcal{O}_c(H)$ for some Hilbert space $H$. Let $\varphi: \mathbb{N} \times \mathbb{N} \to CB(A)$ be a given Schur $A$-multiplier. We prove that $\varphi$ is completely compact if and only if $(i)$ and $(ii)$ hold.

If $(i)$ and $(ii)$ hold, then the second part of $(i)$ implies that $\|a_k^i \|^2=\| a_k^i (a_k^i)^{\ast}\| \to 0$ (resp. $\|a_k^i \|^2=\|(b^i_k)^{\ast} b^i_k\| \to 0$) for each fixed $i$ if $k \to \infty$.Thus we may define $R_i$ (resp. $S_i$) $\in c_0(\mathbb{N},\mathcal{O}_c(H))$ by $(R_i)_{k,k}=a^i_k$ (resp. $(S_i)_{k,k}=b^i_k$). Now $(i)$ implies that $\sum_{i \in J} R_i R_i^{\ast}$ and $\sum_{i \in J} S_i^{\ast}S_i$ are convergent in $c_0(\mathbb{N},\mathcal{O}_c(H))$, then we conclude that $\sum_i R_i \otimes_h S_i$ is in $c_0(\mathbb{N},\mathcal{O}_c(H)) \otimes_h c_0(\mathbb{N},\mathcal{O}_c(H))$, and $(ii)$ implies that
\begin{equation*}
S_{\varphi}(T)= \sum_i R_i \, T \, S_i \ \ \ \ (T \in \mathcal{O}_c(\ell^2) \otimes \mathcal{O}_c(H)),
\end{equation*}
so by Theorem \ref{maintheorem} $S_{\varphi}$ is completely compact map.

Now suppose that $\varphi$ is completely compact Schur $\mathcal{O}_c(H)$-multiplier, then there is $v=\sum_{k \in \mathbb{N}} R_k \otimes_h S_k$ in $c_0(\mathbb{N}, \mathcal{O}_c(H)) \otimes c_0(\mathbb{N}, \mathcal{O}_c(H))$ such that $S_{\varphi}=(\phi_v)|(\mathcal{O}_c(H^{\infty}))$. Furthermore, $\sum_{i \in \mathbb{N}}R_i R_i^{\ast}$ and $\sum_{i \in \mathbb{N}}S_i^{\ast}S_i$ are convergent in the norm of $\mathcal{O}(H^{\infty})$, we conclude that $\sum_{i \in \mathbb{N}}R_i R_i^{\ast}$, $\sum_{i \in \mathbb{N}}S_i^{\ast}S_i \in C_0(\mathbb{N}, \mathcal{O}_c(H))$. We define $a^i_k=(R_i)_{k,k}$ and $b^i_k=(S_i)_{k,k}$ for each $i, \, k \in \mathbb{N}$, then it is easy to verify that $\{a_k^i\}_{i,k \in \mathbb{N}}$ and $\{b_k^i\}_{i,k \in \mathbb{N}}$ satisfy (1) and (2).
\end{proof}

\section{Applications to compact-type $C^{\ast}$-algebra}

In this section, we will use the results of last section to study the compactness of Schur $A$-multiplier where $A$ is compact-type $C^{\ast}$-algebra. We identify $A=\sum_{i \in I}^{0 \oplus} \mathcal{O}_c(H_i)$ for  some collection $\{H_i\}_{i \in I}$ of Hilbert spaces, take $H=\sum_{i \in I}^{\oplus} H_i$ and let $f: \mathcal{O}_c(H) \to A$ be the canonical projection, thus $f$ is completely positive. We shall say that the pair  $(H,f)$ is $associated$ to $A$.

\begin{lemma}\label{extension}
Let $A$ be compact-type $C^{\ast}$-algebra, $(H,f)$ its associated pair. If $\varphi: \mathbb{N} \times \mathbb{N} \to CB(A)$ is Schur $A$-multiplier, then there is a Schur $\mathcal{O}_c(H)$-multiplier $\psi:  \mathbb{N} \times \mathbb{N} \to CB(\mathcal{O}_c(H))$ such that $S_{\psi}|(\mathcal{O}_c(\ell^2(\mathbb{N})) \otimes A)=S_{\varphi}$. Moreover, $S_{\varphi}$ is (completely) compact if and only if $S_{\psi}$ may be chosen to be (completely) compact map.
\end{lemma}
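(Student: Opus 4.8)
The plan is to construct the Schur $\mathcal{O}_c(H)$-multiplier $\psi$ directly from $\varphi$ using the completely positive projection $f : \mathcal{O}_c(H) \to A$ and a conditional expectation-type map in the reverse direction. Recall that $A = \sum_{i \in I}^{\oplus 0}\mathcal{O}_c(H_i)$ sits inside $\mathcal{O}_c(H)$ as a corner; let $\iota: A \hookrightarrow \mathcal{O}_c(H)$ be the inclusion and $f$ the canonical (completely positive, completely contractive) projection, so $f \circ \iota = I_A$. Define $\psi(m,n): \mathcal{O}_c(H) \to \mathcal{O}_c(H)$ by $\psi(m,n)(x) = \iota\bigl(\varphi(m,n)(f(x))\bigr)$, i.e.\ $\psi(m,n) = \iota \circ \varphi(m,n) \circ f$. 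Since $\varphi(m,n) \in CB(A)$ and $f, \iota$ are completely bounded, $\psi(m,n) \in CB(\mathcal{O}_c(H))$ with norm controlled by $\|\varphi(m,n)\|_{cb}$; pointwise measurability of $\psi$ follows from that of $\varphi$ because composing with the fixed completely bounded maps $f$ and $\iota$ preserves weak measurability of $(m,n)\mapsto \psi(m,n)(x)$.

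The next step is to check that $\psi$ is genuinely a Schur $\mathcal{O}_c(H)$-multiplier and that $S_\psi$ restricts correctly. At the level of the operators on $\mathcal{O}_c(\ell^2(\mathbb{N}))\otimes(-)$, the map $f$ induces $\id\otimes f : \mathcal{O}_c(\ell^2)\otimes\mathcal{O}_c(H) \to \mathcal{O}_c(\ell^2)\otimes A$ and $\iota$ induces the inclusion the other way, both completely bounded; one has $S_\psi = (\id\otimes\iota)\circ S_\varphi\circ(\id\otimes f)$ on $\mathcal{O}_c(\ell^2)\otimes\mathcal{O}_c(H)$, which is a composition of completely bounded maps, hence $S_\psi$ is completely bounded, so $\psi$ is a Schur $\mathcal{O}_c(H)$-multiplier. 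For the restriction identity, note that on $\mathcal{O}_c(\ell^2)\otimes A$ the map $\id\otimes f$ acts as the identity (it is the projection restricted to its own range) and $\id\otimes\iota$ is just the inclusion back, so $S_\psi|_{\mathcal{O}_c(\ell^2)\otimes A} = S_\varphi$. One should verify at matrix-entry level, using the description $S_\varphi((T_{m,n})) = (\varphi(n,m)(T_{m,n}))$ from Section 5, that this genuinely gives $S_\psi((T_{m,n})) = (\iota\varphi(n,m)(f(T_{m,n})))$, which reduces to $S_\varphi$ when every $T_{m,n}\in A$.

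For the compactness equivalence, the ``if'' direction is immediate from Remark~\ref{hfsadfjklhvuivrioreiu}: if $S_\psi$ is (completely) compact, then $S_\varphi = (\id\otimes\iota)\circ S_\psi\circ(\id\otimes f)$ restricted appropriately is a completely bounded map pre- and post-composed with a (completely) compact one, hence (completely) compact. For the ``only if'' direction, suppose $S_\varphi$ is (completely) compact; then $S_\psi = (\id\otimes\iota)\circ S_\varphi\circ(\id\otimes f)$ is again a (completely) compact map sandwiched between completely bounded maps, so $S_\psi$ with this particular choice of $\psi$ is (completely) compact. Thus the ``may be chosen'' clause is witnessed by exactly the $\psi$ we built. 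The main subtlety — and the step I would be most careful about — is checking that $\id\otimes f$ and $\id\otimes \iota$ really do carry $S_\varphi$ to $S_\psi$ as maps on the Haagerup/minimal tensor products and that $\id\otimes f$ acts as the identity on $\mathcal{O}_c(\ell^2)\otimes A$; this needs the concrete matrix picture and the fact (from Section 2) that the span of the $E_{p,q}(a)$, $a\in A$, is dense, so that the two completely bounded maps agreeing on these elementary tensors agree everywhere. The complete compactness variant requires only that $f$ and $\iota$ are completely bounded (they are, being completely positive/completely contractive), together with the already-cited fact that completely compact maps form an ideal under composition with completely bounded maps.
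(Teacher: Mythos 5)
Your proposal is correct and follows essentially the same route as the paper: the paper also sets $\psi(n,m)=\varphi(n,m)\circ f$ (phrased via the constant Schur $\mathcal{O}_c(H)$-multiplier $\rho(n,m)=f$, so that $S_\psi=S_\varphi\circ S_\rho$ with $S_\rho=\id\otimes f$), and deduces both directions of the compactness equivalence from stability of (complete) compactness under composition with completely bounded maps, exactly as you do. The only cosmetic difference is that the paper cites \cite[Theorem 2.6]{MTT16} to see that the constant symbol $f$ gives a completely bounded Schur multiplier, whereas you argue directly with $\id\otimes f$ on the minimal tensor product; these amount to the same verification.
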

\begin{proof}
Since $f$ is completely bounded,  by \cite[Theorem 2.6]{MTT16} $\rho: \mathbb{N} \times \mathbb{N} \to CB(\mathcal{O}_c(H), A)$ defined by
\begin{equation*}
\rho(n,m)=f
\end{equation*} 
is Schur $zmathcal{O}_c(H)$-multiplier. For each $m,n$ let us define $\psi(n,m): \mathcal{O}_c(H) \to \mathcal{O}_c(H)$ by
\begin{equation*}
\psi(n,m)(a)= (\varphi(n,m) \circ \rho(n,m))  \ (a) \ \ \ \ (a \in \mathcal{O}_c(H)).
\end{equation*}
Then $S_{\psi}(T)=S_{\varphi} \circ S_{\rho}(T)$ for all $T \in \mathcal{O}_c(\ell^2)\otimes \mathcal{O}_c(H)$, $S_{\psi}:  \mathcal{O}_c(\ell^2)\otimes \mathcal{O}_c(H) \to  \mathcal{O}_c(\ell^2)\otimes \mathcal{O}_c(H)$ is Schur $\mathcal{O}_c(H)$-multiplier whose range is contained in $\mathcal{O}_c(\ell^2) \otimes A$ and  $S_{\psi}(T)=S_{\varphi}(T)$ for all $T \in \mathcal{O}_c(\ell^2)\otimes A$. Now since $S_{\rho}$ is completely bounded, we conclude that if $S_{\varphi}$ is (completely) compact then $S_{\psi}$ is (completely) compact. Conversely, if $S_{\psi}$ is (completely) compact,   since $S_{\psi}(T)=S_{\varphi}(T)$ for all $T \in \mathcal{O}_c(\ell^2) \otimes A$ and that there is completely positive map $id \otimes f :  \mathcal{O}_c(\ell^2) \otimes \mathcal{O}_c(H) \to \mathcal{O}_c(\ell^2) \otimes A$ which is identity map on $\mathcal{O}_c(\ell^2) \otimes A$, we conclude that $S_{\varphi}$ is (completely) compact.
\end{proof}

By Lemma \ref{extension} and Theorem \ref{final1} together we have:
\begin{theorem}\label{compacttypecase}
If $A$ is compact type $C^{\ast}$-algebra, then there is Hilbert space $H$ such that the following are equivalent:
\\ (i) $\varphi$ is in $\mathfrak{C.C.S}$(A);
\\ (ii) there are index set $J$ and $\{a^i_k\}_{i \in J, k \in \mathbb{N}}$ and $\{b^i_k\}_{i \in J, k \in \mathbb{N}}$ $\subset \mathcal{O}_c(H)$ such that:

\ \ \ \ \ (1) \, $\sum _i a^i_k (a^i_k)^{\ast}$ and $\sum_i (b^i_k)^{\ast} b^i_k $ are convergent in the norm of $\mathcal{O}(H)$ for each $k \in \mathbb{N}$ and $\sum_{i \in J} a_k^i(a_k^i)^{\ast}, \sum_{i \in J}(b_k^i)^{\ast}b_k^i \to 0 $ as $k \to \infty$; 

\ \ \ \ \ (2) for any $m,n \in \mathbb{N}$,
\begin{equation*}
\varphi(m,n) ( x) =\sum_i a^i_n \, x \, b^i_m, x \in A.
\end{equation*}
\end{theorem}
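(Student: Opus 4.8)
The plan is to derive Theorem~\ref{compacttypecase} by transporting the characterisation of completely compact Schur $\mathcal{O}_c(H)$-multipliers obtained in Theorem~\ref{final1} along the associated pair $(H,f)$, using Lemma~\ref{extension} as the bridge. Fix $A = \sum_{i \in I}^{\oplus 0}\mathcal{O}_c(H_i)$ and put $H = \sum_{i \in I}^{\oplus}H_i$, so that $A$ is (completely isometrically) a subalgebra of $\mathcal{O}_c(H)$ and $f: \mathcal{O}_c(H) \to A$ is the canonical (completely positive) projection. Since $\mathcal{O}_c(H)$ is elementary, Theorem~\ref{final1} applies to it: a Schur $\mathcal{O}_c(H)$-multiplier $\psi$ is completely compact exactly when there are families $\{c^i_k\}$, $\{d^i_k\} \subset \mathcal{O}_c(H)$ satisfying the two bulleted conditions (convergence of $\sum_i c^i_k(c^i_k)^{\ast}$ and $\sum_i (d^i_k)^{\ast}d^i_k$ in norm for each $k$, tending to $0$ as $k \to \infty$) together with $\psi(m,n)(x) = \sum_i c^i_n\,x\,d^i_m$. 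The strategy is: (1) use Lemma~\ref{extension} to pass between $\varphi \in \mathfrak{CCS}(A)$ and a completely compact Schur $\mathcal{O}_c(H)$-multiplier $\psi$ extending it; (2) write out the representation of $\psi$ given by Theorem~\ref{final1}; (3) restrict everything to $A$ and check the restricted coefficients still do the job.

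Concretely, first I would prove the direction $(i) \Rightarrow (ii)$. Given $\varphi \in \mathfrak{CCS}(A)$, Lemma~\ref{extension} produces a completely compact Schur $\mathcal{O}_c(H)$-multiplier $\psi$ with $S_\psi|(\mathcal{O}_c(\ell^2(\mathbb{N})) \otimes A) = S_\varphi$. Apply Theorem~\ref{final1} to $\psi$ to get $\{c^i_k\}$, $\{d^i_k\} \subset \mathcal{O}_c(H)$ with the convergence-and-decay conditions and $\psi(m,n)(x) = \sum_i c^i_n\,x\,d^i_m$ for all $x \in \mathcal{O}_c(H)$. Now set $a^i_k = f(c^i_k)$ and $b^i_k = f(d^i_k)$ (equivalently, compress by the projection onto the relevant block of $H$); these lie in $A$. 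Because $f$ is completely positive and contractive, $\sum_i a^i_k(a^i_k)^{\ast} \le f\big(\sum_i c^i_k(c^i_k)^{\ast}\big)$ in the appropriate sense — more carefully, compression by a projection is a completely positive map, so the partial sums stay dominated and the series converge in the norm of $A \subset \mathcal{O}(H)$, with the same decay as $k \to \infty$; this gives condition (1). For condition (2), observe that for $x \in A$ the compressions interact correctly: since $f$ is the projection onto $A$ and $\varphi(m,n)(x) = \psi(m,n)(x)$ lies in $A$, we have $\varphi(m,n)(x) = f(\psi(m,n)(x)) = f\big(\sum_i c^i_n\,x\,d^i_m\big)$, and because $x \in A$ already, multiplying by the block projections on either side turns this into $\sum_i a^i_n\,x\,b^i_m$. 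I would spell out this last manipulation using the block-matrix structure of $A$ inside $\mathcal{O}_c(H)$, as in the proof of Lemma~\ref{inversecompact}.

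For the converse $(ii) \Rightarrow (i)$, suppose $\{a^i_k\}, \{b^i_k\} \subset \mathcal{O}_c(H)$ satisfy (1) and (2). Then the very same families, viewed in $\mathcal{O}_c(H)$, satisfy the hypotheses of the ``if'' direction of Theorem~\ref{final1}, so the Schur $\mathcal{O}_c(H)$-multiplier $\psi$ defined by $\psi(m,n)(x) = \sum_i a^i_n\,x\,b^i_m$ ($x \in \mathcal{O}_c(H)$) is completely compact. By (2), $S_\psi|(\mathcal{O}_c(\ell^2(\mathbb{N})) \otimes A) = S_\varphi$, and then the ``moreover'' part of Lemma~\ref{extension} (or directly Remark~\ref{hfsadfjklhvuivrioreiu} applied to the completely positive idempotent $\id \otimes f$) shows $S_\varphi$ is completely compact, i.e.\ $\varphi \in \mathfrak{CCS}(A)$.

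The main obstacle I anticipate is bookkeeping in step (2) of the forward direction: one must verify that applying $f$ (compression by block projections) to the identity $\psi(m,n)(x) = \sum_i c^i_n\,x\,d^i_m$ really yields $\sum_i a^i_n\,x\,b^i_m$ and not merely something dominated by it, and that this holds term-by-term so that convergence is preserved. This works because $x \in A$ is block-diagonal, so left/right compression commutes appropriately with the $c^i_n, d^i_m$ factors; making this precise requires the matrix formalism of Lemma~\ref{inversecompact}, and care is needed that the index set $J$ and the countable index $k$ are handled uniformly. Everything else — the convergence estimates for $\sum_i a^i_k(a^i_k)^\ast$ via complete positivity of compression, the decay as $k \to \infty$, and the invocation of Lemma~\ref{extension} — is routine.
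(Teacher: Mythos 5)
Your overall architecture --- extend $\varphi$ to a completely compact Schur $\mathcal{O}_c(H)$-multiplier $\psi$ via Lemma~\ref{extension}, apply Theorem~\ref{final1} to $\psi$, and restrict back to $A$ --- is exactly the paper's route, and your converse direction is fine. But the forward direction contains a genuine error at the step you yourself flag as the ``main obstacle''. Having obtained $\{c^i_k\},\{d^i_k\}\subset\mathcal{O}_c(H)$ with $\psi(m,n)(x)=\sum_i c^i_n\,x\,d^i_m$, you compress the coefficients to $a^i_k=f(c^i_k)$, $b^i_k=f(d^i_k)\in A$ and claim that $f\bigl(\sum_i c^i_n\,x\,d^i_m\bigr)=\sum_i a^i_n\,x\,b^i_m$ for $x\in A$. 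This identity is false: writing elements of $\mathcal{O}_c(H)$ as $I\times I$ block matrices and $x=\mathrm{diag}(x_j)$, the $(j,j)$ block of $c\,x\,d$ is $\sum_{l} c_{jl}\,x_l\,d_{lj}$, whereas the $(j,j)$ block of $f(c)\,x\,f(d)$ is $c_{jj}\,x_j\,d_{jj}$; the off-diagonal blocks of $c^i_n$ and $d^i_m$ contribute to the diagonal of the product and are discarded by your compression. Moreover this cannot be repaired: the Remark following the theorem (equivalently, the map $\Lambda$ constructed in the proof of Lemma~\ref{inversecompact}) shows that for non-elementary compact-type $A$ there exist completely compact Schur $A$-multipliers admitting \emph{no} representation with coefficients taken from $A$, which is precisely what your compression would manufacture.

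The resolution is that you are trying to prove more than the statement asks. Condition (ii) only requires $\{a^i_k\},\{b^i_k\}\subset\mathcal{O}_c(H)$, not $\subset A$. So simply take $a^i_k=c^i_k$ and $b^i_k=d^i_k$: condition (1) is then the corresponding condition of Theorem~\ref{final1} verbatim, and condition (2) follows by restricting $\psi(m,n)(x)=\sum_i c^i_n\,x\,d^i_m$ to $x\in A$, where $\psi(m,n)(x)=\varphi(m,n)(x)$ because $S_\psi$ agrees with $S_\varphi$ on $\mathcal{O}_c(\ell^2(\mathbb{N}))\otimes A$. With the compression deleted, your argument coincides with the paper's.
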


\begin{remark}
If we compare the previous theorem with Theorem \ref{final1}, we noticed that in condition (ii)  $\{a^i_k\}_{i \in J, k \in \mathbb{N}}$ and $\{b^i_k\}_{i \in J, k \in \mathbb{N}}$ can be chosen from $A$ if and only if $A$ is $\ast$-isomorphic to $\mathcal{O}_c(K)$ for some Hilbert space $K$.
\end{remark}

\section{Compactness and complete compactness}

In this section, we prove some results by aid of which we could identify compactness and complete compactness in some cases. We fix $\Omega$ to be a compact Hausdorff space, $C(\Omega)$ to be the space of all continuous complex-valued functions on $\Omega$.

The proofs of the following two lemmas are standard and we ommit them:
\begin{lemma}\label{power}
Let $\mathcal{X}$ and $\mathcal{Y}$ be operator spaces, $\Psi \in \mathcal{O}_c(\mathcal{X}, \mathcal{Y})$ . If $(\Phi_{\alpha})_{\alpha \in \mathbb{A}} \subset B(\mathcal{Y})$ is a net with $sup_{\alpha \in \mathbb{A}} \text{\Large{$\|$}} \Phi_{\alpha}\text{\Large{$\|$}} < \infty$ and $\Phi \in B(\mathcal{Y})$ such that $\Phi_{\alpha}(a) \to \Phi(x)$ for all $x \in \mathcal{X}$. Then $\text{\Large{$\|$}}\Phi_{\alpha} \circ \Psi-\Phi \circ \Psi\text{\Large{$\|$}} \to _{\alpha \in \mathbb{A}} 0$.
\end{lemma}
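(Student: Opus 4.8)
The plan is to interpret the hypothesis as pointwise convergence $\Phi_\alpha(y)\to\Phi(y)$ for every $y\in\mathcal{Y}$, and then to exploit the total boundedness of the image of the unit ball of $\mathcal{X}$ under the compact operator $\Psi$, together with this pointwise convergence and the uniform bound $M:=\sup_{\alpha\in\mathbb{A}}\|\Phi_\alpha\|<\infty$. This is a purely Banach-space argument; the operator space (matricial) structure plays no role.

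First I would fix $\epsilon>0$. Since $\Psi$ is a compact operator, the set $\{\Psi(x):x\in\mathcal{X},\ \|x\|\le 1\}$ is totally bounded in $\mathcal{Y}$, so there exist finitely many points $y_1,\dots,y_N\in\mathcal{Y}$ such that every $\Psi(x)$ with $\|x\|\le 1$ lies within $\epsilon$ of some $y_j$. Using that $(\mathbb{A},\le)$ is directed and that there are only finitely many $y_j$, I would choose $\alpha_0\in\mathbb{A}$ with $\|\Phi_\alpha(y_j)-\Phi(y_j)\|<\epsilon$ for all $\alpha\ge\alpha_0$ and all $j=1,\dots,N$ (obtained by taking an upper bound of the finitely many thresholds coming from the individual points).

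Then, for any $x\in\mathcal{X}$ with $\|x\|\le 1$, picking $j$ with $\|\Psi(x)-y_j\|<\epsilon$ and applying the triangle inequality,
\begin{equation*}
\|\Phi_\alpha(\Psi x)-\Phi(\Psi x)\|\le \|\Phi_\alpha\|\,\|\Psi x-y_j\|+\|\Phi_\alpha(y_j)-\Phi(y_j)\|+\|\Phi\|\,\|y_j-\Psi x\|\le (M+1+\|\Phi\|)\,\epsilon
\end{equation*}
for all $\alpha\ge\alpha_0$. Taking the supremum over $\{x\in\mathcal{X}:\|x\|\le 1\}$ gives $\|\Phi_\alpha\circ\Psi-\Phi\circ\Psi\|\le (M+1+\|\Phi\|)\epsilon$ whenever $\alpha\ge\alpha_0$, and since $\epsilon>0$ was arbitrary this is precisely the claimed norm convergence.

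There is no genuine obstacle here: the statement is the standard fact that a strongly convergent, uniformly bounded net composed on the compact side converges in operator norm. The only points requiring a little care are that compactness of $\Psi$ supplies a finite $\epsilon$-net for the image of the unit ball, and that the directedness of $\mathbb{A}$ allows the finitely many net-point thresholds to be absorbed into a single index $\alpha_0$; both are routine. (One may also note that $\Phi\in B(\mathcal{Y})$ with $\|\Phi\|\le M$ follows automatically from the hypotheses, so that assumption is harmless.)
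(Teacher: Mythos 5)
Your argument is correct and is exactly the standard one (finite $\epsilon$-net for the totally bounded image of the unit ball under the compact map $\Psi$, uniform boundedness, triangle inequality); the paper itself omits the proof as ``standard,'' and this is the intended argument. The only cosmetic point is that the hypothesis should read $\Phi_\alpha(y)\to\Phi(y)$ for all $y\in\mathcal{Y}$ (a typo in the statement), which is how you correctly interpreted it.
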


\begin{lemma}\label{1842401}
For any $(f_{i,j})_{i,j=1}^{\infty} \in \mathcal{O}(\ell^2) \otimes_{min} C(\Omega)$, we have
\begin{equation*}
\|(f_{i,j})_{i,j=1}^{\infty}\|=sup\{\|(f_{i,j}(\omega))_{i,j=1}^{\infty}\|: \omega \in \Omega\},
\end{equation*}
where the norm of the right hand-side is taken from $\mathcal{O}(\ell^2)$. In particular, $\|(f_{i,j}(\omega))_{i,j=1}^{\infty}\| \leq\|(f_{i,j})_{i,j=1}^{\infty}\|$ .
\end{lemma}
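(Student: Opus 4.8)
The statement is the standard isometric identification $\mathcal{O}(\ell^2)\otimes_{min}C(\Omega)\cong C(\Omega,\mathcal{O}(\ell^2))$ (the latter carrying the supremum norm) read through matrix entries, so the plan is to assemble these two ingredients. First I would set up the matrix picture: for each $i,j$ the $(i,j)$-entry functional $\varepsilon_{i,j}\colon\mathcal{O}(\ell^2)\to\mathbb{C}$, $T\mapsto T_{i,j}$, is contractive, hence the slice map $\varepsilon_{i,j}\otimes\mathrm{id}\colon\mathcal{O}(\ell^2)\otimes_{min}C(\Omega)\to C(\Omega)$ is well defined and contractive; for $F$ in this tensor product put $f_{i,j}=(\varepsilon_{i,j}\otimes\mathrm{id})(F)\in C(\Omega)$, which is consistent with how an element of the algebraic tensor product is written and identifies $F$ with its ``matrix'' $(f_{i,j})_{i,j}$. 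Next, for $\omega\in\Omega$ the point evaluation $\mathrm{ev}_\omega\colon C(\Omega)\to\mathbb{C}$ is a $*$-homomorphism, so $\rho_\omega:=\mathrm{id}\otimes\mathrm{ev}_\omega$ extends to a $*$-homomorphism $\mathcal{O}(\ell^2)\otimes_{min}C(\Omega)\to\mathcal{O}(\ell^2)$ and is therefore contractive; since slice maps commute one checks $\varepsilon_{i,j}(\rho_\omega(F))=f_{i,j}(\omega)$, i.e.\ $\rho_\omega(F)=(f_{i,j}(\omega))_{i,j}$. Contractivity of $\rho_\omega$ then gives $\|(f_{i,j}(\omega))_{i,j}\|\le\|F\|$ for every $\omega$, which is the ``in particular'' assertion and the inequality $\sup_\omega\|(f_{i,j}(\omega))_{i,j}\|\le\|(f_{i,j})_{i,j}\|$.

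For the reverse inequality I would introduce $\Phi\colon\mathcal{O}(\ell^2)\otimes_{min}C(\Omega)\to C(\Omega,\mathcal{O}(\ell^2))$ by $\Phi(F)(\omega)=\rho_\omega(F)$. This lands in the continuous functions: if $F_n\to F$ with each $F_n$ in the algebraic tensor product, then $\|\rho_\omega(F)-\rho_\omega(F_n)\|\le\|F-F_n\|$ uniformly in $\omega$, so $\omega\mapsto\rho_\omega(F)$ is a uniform limit of continuous functions. Then $\Phi$ is a $*$-homomorphism into the $C^*$-algebra $C(\Omega,\mathcal{O}(\ell^2))$ with $\|\Phi(F)\|=\sup_\omega\|\rho_\omega(F)\|=\sup_\omega\|(f_{i,j}(\omega))_{i,j}\|$, so it suffices to show $\Phi$ is isometric. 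I would get this from the minimality of $\otimes_{min}$: on the algebraic tensor product $\mathcal{O}(\ell^2)\odot C(\Omega)$ the map $x\mapsto\|\Phi(x)\|$ is a $C^*$-seminorm (pullback of the $C^*$-norm of $C(\Omega,\mathcal{O}(\ell^2))$ along a $*$-homomorphism), and in fact a norm because the family $(\mathrm{ev}_\omega)_{\omega\in\Omega}$ separates points of $C(\Omega)$; since $\|\cdot\|_{min}$ is the smallest $C^*$-norm on $\mathcal{O}(\ell^2)\odot C(\Omega)$, we get $\|x\|_{min}\le\|\Phi(x)\|$ there, and hence $\|F\|\le\|\Phi(F)\|$ on the completion by continuity. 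Combined with the first paragraph this yields $\|(f_{i,j})_{i,j}\|=\|\Phi(F)\|=\sup_\omega\|(f_{i,j}(\omega))_{i,j}\|$. Alternatively one may simply quote the well-known isomorphism $B\otimes_{min}C(\Omega)\cong C(\Omega,B)$ for a $C^*$-algebra $B$; $C(\Omega)$ is nuclear, so there is no ambiguity in the tensor norm.

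The only genuine point is the isometry of $\Phi$, equivalently the fact that the minimal $C^*$-norm on $\mathcal{O}(\ell^2)\otimes C(\Omega)$ coincides with the supremum over the point evaluations; this is where the ``smallest $C^*$-norm'' property of $\otimes_{min}$ (or nuclearity of $C(\Omega)$) enters. Everything else is routine bookkeeping with slice maps together with one approximation argument, and if one is content to cite the $C(\Omega,B)$ description of $B\otimes_{min}C(\Omega)$ as known, the proof reduces entirely to the matrix-entry identification of the first paragraph.
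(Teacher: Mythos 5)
Your argument is correct: the contractivity of the $*$-homomorphisms $\mathrm{id}\otimes\mathrm{ev}_\omega$ gives one inequality, and the minimality of the $\otimes_{min}$-norm (equivalently, nuclearity of $C(\Omega)$ and the identification $\mathcal{O}(\ell^2)\otimes_{min}C(\Omega)\cong C(\Omega,\mathcal{O}(\ell^2))$) gives the other, with the injectivity of $\Phi$ on the algebraic tensor product correctly justified via linear independence and point evaluations. The paper omits the proof as ``standard,'' and what you have written is precisely that standard argument, carried out in full.
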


\begin{lemma}\label{18051601}
If $A$ is a $C^{\ast}$-algebra which is $\ast$-isomorphic to a subalgebra of $M_n \otimes C(\Omega)$ for some $n \in \mathbb{N}$ and compact space $\Omega$, then compact linear map from $A$ into $A$ is completely compact.
\end{lemma}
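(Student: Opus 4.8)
The plan is to reduce the problem to a uniform-approximation statement over the compact space $\Omega$, using the crucial fact from Lemma \ref{1842401} that the operator-space norm on $\mathcal{O}(\ell^2)\otimes_{\min}C(\Omega)$ is the supremum over $\omega\in\Omega$ of the pointwise norms in $\mathcal{O}(\ell^2)$. First I would fix a $\ast$-isomorphic embedding $A\hookrightarrow M_n\otimes C(\Omega)$, which by Remark \ref{hfsadfjklhvuivrioreiu} lets me replace $A$ by its image without loss of generality; since $M_n$ is finite dimensional, a linear map on $M_n\otimes C(\Omega)$ has a very rigid structure, essentially an $n\times n$ matrix of continuous operator-valued maps. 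Given a compact linear map $\Phi\colon A\to A$, I would first extend it (using the conditional-expectation / slicing structure, as in Lemma \ref{matrix product} and the complete boundedness of $M_n$) to a compact linear map on $M_n\otimes C(\Omega)$, so that the whole argument takes place in the concrete algebra $M_n\otimes C(\Omega)$.

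The heart of the argument: a compact linear map $\Phi$ on $M_n\otimes C(\Omega)$ has relatively norm-compact image, so for each $\varepsilon>0$ we can cover $\overline{\Phi(\text{ball})}$ by finitely many $\varepsilon$-balls; equivalently, $\Phi$ is a norm-limit of finite-rank maps $\Phi_N$ whose ranges lie in a fixed finite-dimensional subspace $F\subset M_n\otimes C(\Omega)$. The key point is then to upgrade "finite rank in norm" to "completely compact": because the matrix amplifications are governed by the identity in Lemma \ref{1842401}, for $x\in M_m(M_n\otimes C(\Omega))$ with $\|x\|\le 1$ one has $\|(\Phi^{(m)}-\Phi_N^{(m)})(x)\| = \sup_{\omega}\|(\Phi-\Phi_N)^{(m)}(x(\omega))\|$, and since $M_n$ is finite dimensional the quantity $\|(\Phi-\Phi_N)^{(m)}\|$ is comparable to (indeed, bounded by a fixed multiple, depending only on $n$, of) $\|\Phi-\Phi_N\|$ — this is where the finite dimensionality of $M_n$ is essential, via $\|\Psi^{(m)}\|_{M_n}\le n^2\|\Psi\|$ or the fact that $M_n$ is an operator space for which completely bounded and bounded norms of maps are uniformly equivalent. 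Hence $\Phi$ is approximated in complete bounded norm by maps $\Phi_N$ whose amplifications land in $M_m(F)$, and by the cited \cite[Proposition 3.2]{compactness} (quoted right after the definition of complete compactness) $\Phi$ is completely compact.

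The main obstacle I anticipate is the first reduction step — extending the compact map $\Phi$ from the (possibly proper, possibly non-selfadjoint in the operator-space sense) subalgebra $A$ to all of $M_n\otimes C(\Omega)$ while preserving compactness — since there is no conditional expectation onto an arbitrary subalgebra. I would circumvent this by noting that I do not actually need to extend $\Phi$: I can run the approximation argument directly on $A$, because the norm identity of Lemma \ref{1842401} restricts to $A$ (as $A$ carries the subspace operator-space structure), and the finite-dimensional approximating subspace $F$ can be taken inside $\overline{\Phi(A)}\subseteq A$. The only genuinely delicate bookkeeping is verifying that $\|\Psi^{(m)}\|\le c(n)\|\Psi\|$ uniformly in $m$ for linear maps $\Psi$ on $A\subseteq M_n\otimes C(\Omega)$; this follows because any such $\Psi$ factors through the finite-dimensional "fibre direction" $M_n$ after evaluation, so its complete boundedness constant is controlled by $n$ — and this is precisely the place where the hypothesis "$M_n$ with $n\in\mathbb{N}$" (rather than $\mathcal{O}_c(H)$ for infinite-dimensional $H$) cannot be dropped, consistent with the Remark following Theorem \ref{compacttypecase}.
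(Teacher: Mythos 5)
The second half of your argument is sound and is essentially the mechanism the paper uses: for any linear map $\Psi$ into $M_n\otimes C(\Omega)=M_n(C(\Omega))$ one has $\|\Psi\|_{cb}\le n^2\|\Psi\|$ (write $\Psi=(\Psi_{ij})$ with $\Psi_{ij}$ mapping into the commutative algebra $C(\Omega)$, where bounded and completely bounded norms coincide), so a norm-approximation of $\varphi$ by finite-rank maps upgrades to a cb-norm approximation, and \cite[Proposition 3.2]{compactness} finishes the proof. Your decision not to extend $\varphi$ off $A$ is also fine.

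The gap is in the first half, where you pass from ``$\varphi$ is compact'' to ``$\varphi$ is a norm-limit of finite-rank maps $\Phi_N$.'' Covering $\overline{\varphi(\mathrm{ball})}$ by finitely many $\varepsilon$-balls does not produce a \emph{linear} finite-rank map within $\varepsilon$ of $\varphi$: the nearest-point assignment to an $\varepsilon$-net is not linear, and the equivalence ``compact $\Leftrightarrow$ norm-limit of finite-rank'' is exactly the approximation property of the target space, which fails for general Banach spaces (Enflo). In the present situation it does hold, but only because $M_n\otimes C(\Omega)$ (and hence its subhomogeneous $C^{\ast}$-subalgebra $A$) is nuclear; this has to be invoked. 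The paper fills precisely this hole: it takes a net $\{\psi_m\}$ of finite-rank completely positive maps on $M_n\otimes C(\Omega)$ with $\psi_m(a)\to a$ pointwise (nuclearity), and then uses the standard fact (Lemma \ref{power}) that composing a uniformly bounded, point-norm convergent net with the \emph{compact} map $\varphi$ yields $\|\psi_m\circ\varphi-\varphi\|\to 0$; the finite-rank approximants are the maps $\psi_m\circ\varphi$. If you replace your ``cover by $\varepsilon$-balls, equivalently finite rank'' step by this nuclearity argument (or an explicit appeal to the metric approximation property of $A$), your proof closes and coincides in substance with the paper's.
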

\begin{proof}
Let $\varphi: A \to A$ be a compact linear map. We identify $A$ as a $C^{\ast}$-subalgebra of $ M_n \otimes C(\Omega)$. Since $M_n \otimes C(\Omega)$ is nuclear, let $\{\psi_m: M_n \otimes C(\Omega) \to M_n \otimes C(\Omega)\}_{m \in I}$ be a net of finite-rank completely positive maps such that $\psi_m(a) \to a$ for all $a \in M_n \otimes C(\Omega)$ provided $m \to \infty$. Then we have $\psi_{m} \circ \varphi(a) \to \varphi(a)$ for all $a \in A$. Since $\{\|\psi_m \circ \varphi\|\}$ is bounded, by Lemma \ref{power} we have $\|\psi_m \circ \varphi -\varphi\| \to 0$. Therefore $\{\psi_m \circ \varphi\}$ is Cauchy net in $\mathcal{O}(A, M_n \otimes C(\Omega))$. But $\mathcal{CB}(A, M_n \otimes C(\Omega))=\mathcal{O}(A,M_n \otimes C(\Omega))$, by Open Mapping Theorem we conclude that $\{\psi \circ \varphi\}$ is Cauchy net in $\mathcal{CB}(A,M_n \otimes C(\Omega))$ as well, it is easy to verify that $\psi_m \circ \varphi \to \varphi$ completely. Since each $\psi_m \circ \varphi$ is of finite rank,  $\varphi$ is completely compact.
\end{proof}

\begin{proposition}
Let $V$ be an operator space, and $n \in \mathbb{N}$ be fixed number. Let  $\varphi_{i,j}: V \to C(\Omega)$ be completely bounded maps ($i,j=1, \ldots ,n$), we define $S_{\varphi}: M_n(V) \to M_n(C(\Omega))$ by
\begin{equation*}
S_{\varphi}((v_{i,j})_{i,j=1}^n)=(\varphi_{i,j}(v_{i,j}))_{i,j=1}^n.
\end{equation*}
Then $\|S_{\varphi}\|_{cb}=\|S_{\varphi}\|$.
\end{proposition}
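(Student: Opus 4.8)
The plan is to show that for the "Schur-type" map $S_\varphi$ into $M_n(C(\Omega))$, the completely bounded norm is computed by a single "evaluate at a point" trick, reducing everything to the scalar-valued norm. The key observation is that $C(\Omega)$ is a commutative $C^*$-algebra, so for any $k \in \mathbb{N}$ and any matrix $(w_{(p,i),(q,j)}) \in M_k(M_n(C(\Omega))) = M_{kn}(C(\Omega))$, its norm is, by Lemma~\ref{1842401}, the supremum over $\omega \in \Omega$ of the norms of the scalar matrices obtained by evaluation. So the whole computation of $\|S_\varphi^{(k)}\|$ should come down to understanding, for a fixed $\omega$, the map that sends $(v_{i,j}) \in M_n(V)$ to the scalar matrix $(\varphi_{i,j}(v_{i,j})(\omega))_{i,j}$.

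First I would fix $k$ and $\omega \in \Omega$, and consider the functional $\delta_\omega \circ \varphi_{i,j}: V \to \mathbb{C}$ for each $i,j$. Applying the amplification $S_\varphi^{(k)}$ to a contraction $x = (x^{(p,q)})_{p,q=1}^k \in M_k(M_n(V))$ with $\|x\| \le 1$ and then evaluating at $\omega$ produces the scalar $kn \times kn$ matrix whose $((p,i),(q,j))$ entry is $\varphi_{i,j}(x^{(p,q)}_{i,j})(\omega)$. The point is that this is exactly what the amplification $S_{\varphi^\omega}^{(k)}$ does, where $S_{\varphi^\omega}: M_n(V) \to M_n$ is built from the scalar maps $\delta_\omega \circ \varphi_{i,j}$. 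Hence $\|S_\varphi^{(k)}(x)\| = \sup_{\omega} \|S_{\varphi^\omega}^{(k)}(x \text{ evaluated appropriately})\| \le \sup_\omega \|S_{\varphi^\omega}\|_{cb}$, so $\|S_\varphi\|_{cb} = \sup_\omega \|S_{\varphi^\omega}\|_{cb}$, and symmetrically $\|S_\varphi\| = \sup_\omega \|S_{\varphi^\omega}\|$. Thus it suffices to prove the statement for maps into $M_n$ itself, i.e. the case $\Omega$ is a point.

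So the crux reduces to: if $\varphi_{i,j}: V \to \mathbb{C}$ are bounded functionals and $S_\varphi: M_n(V) \to M_n$ is $(v_{i,j}) \mapsto (\varphi_{i,j}(v_{i,j}))$, then $\|S_\varphi\|_{cb} = \|S_\varphi\|$. For this I would use the standard fact that $M_n$ is (completely) injective and more specifically that a linear map into $M_n$ has completely bounded norm controlled by its ordinary norm up to the structure being "matrix-patterned": one writes $S_\varphi(v) = \sum_{i,j} \varphi_{i,j}(v_{i,j}) e_{ij}$ and notes that $S_\varphi = \sum_{i,j} L_{e_{ii}} \circ (\text{something}) \circ R_{e_{jj}}$-type decomposition. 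Concretely, $S_\varphi(v) = \sum_{i,j} e_{ii} \, \tilde\varphi(v) \, e_{jj}$ is not quite right, so instead I would use the Haagerup/Paulsen characterization: a map $\Psi$ into $M_n$ factors as $\Psi(v) = V^* \pi(v) W$ is overkill here; the clean route is that for the specific pattern map, testing on the single element $(v_{i,j})$ with a suitable choice already saturates the cb-norm, because $M_n(M_n(V)) $ contains the "diagonal-embedding" copies that realize the amplified norm via a fixed finite-dimensional witness. I expect this last reduction — showing $\|S_\varphi\|_{cb} \le \|S_\varphi\|$ for pattern maps into $M_n$ — to be the main obstacle, and I would handle it by invoking that any bounded linear map from an operator space into $M_n$ satisfies $\|\Psi\|_{cb} \le n \|\Psi\|$ in general but that the matrix-pattern structure (each entry depending only on the corresponding entry) collapses the amplification, so that an $m$-fold amplification of $S_\varphi$, after conjugating by the shuffle unitary identifying $M_m(M_n(V))$ with $M_n(M_m(V))$, acts entrywise by the $\varphi_{i,j}$ amplified to $M_m(V) \to M_m$, and a functional amplified this way has $\|(\varphi_{i,j})_m\| = \|\varphi_{i,j}\| $ only in the scalar sense — so the genuinely careful step is bounding $\|(S_\varphi)_m\|$ by analyzing $\sup\{\|(\varphi_{i,j}(v_{i,j}))\|_{M_{nm}} : \|(v_{i,j})\|_{M_{nm}(V)}\le 1\}$ and recognizing this as an instance of the $n=$ same-size problem with $V$ replaced by $M_m(V)$, i.e. running an induction/limit argument on the block size. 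I would close the loop by checking that the witness realizing $\|S_\varphi\|$ can be taken inside $M_n(V)$ with no amplification needed, which is where the commutativity of $C(\Omega)$ (already used) plus the finite, fixed size $n$ does the real work.
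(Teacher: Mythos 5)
Your opening reduction is sound: using Lemma~\ref{1842401} to write $\|(S_\varphi)_k(x)\|=\sup_{\omega\in\Omega}\|(S_{\varphi^\omega})_k(x)\|$, where $\varphi^\omega_{i,j}=\delta_\omega\circ\varphi_{i,j}$, does give $\|S_\varphi\|_{cb}=\sup_\omega\|S_{\varphi^\omega}\|_{cb}$ and $\|S_\varphi\|=\sup_\omega\|S_{\varphi^\omega}\|$, so it would indeed suffice to treat pattern maps $M_n(V)\to M_n$ with functional entries. The problem is that this remaining case \emph{is} the entire content of the proposition, and you do not prove it. Everything after ``So the crux reduces to'' is a list of approaches you start and then abandon (``is not quite right'', ``is overkill here''), ending with the bare assertion that ``the witness realizing $\|S_\varphi\|$ can be taken inside $M_n(V)$ with no amplification needed'' --- which is just a restatement of the inequality $\|S_\varphi\|_{cb}\le\|S_\varphi\|$ to be proved, not an argument for it. Your observation that each $\varphi_{i,j}:V\to\mathbb{C}$ satisfies $\|\varphi_{i,j}\|_{cb}=\|\varphi_{i,j}\|$ (true for any functional on an operator space) does not help here: the amplification $(S_\varphi)_m$ does not decompose entrywise, since the $n^2$ entries interact through the $M_{mn}$ norm, and the naive bound one extracts this way is $\|S_\varphi\|_{cb}\le n\max_{i,j}\|\varphi_{i,j}\|$ or worse, not $\|S_\varphi\|$. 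The proposed ``induction/limit argument on the block size'' also goes nowhere as stated, because after the canonical shuffle $(S_\varphi)_m$ is again a pattern map, now with entries $(\varphi_{i,j})_m:M_m(V)\to M_m$, i.e.\ the same problem with a larger target rather than a smaller one.

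The missing idea --- and the one the paper actually uses --- is that $S_\varphi$ is a bimodule map over the diagonal algebra $\mathcal{D}_n\subset M_n$: $S_\varphi(C\,v\,D)=C\,S_\varphi(v)\,D$ for all diagonal $C,D$. For such maps the equality $\|\cdot\|_{cb}=\|\cdot\|$ follows by adapting the argument of \cite[Proposition 8.6]{Paulsen} (averaging over the diagonal unitaries, combined with Smith's lemma that a map into $M_n(C(\Omega))$ has $\|\cdot\|_{cb}=\|\cdot_n\|$, which is exactly where the commutativity of $C(\Omega)$ enters). Without identifying this structure, or some equivalent mechanism, your write-up establishes only the trivial inequality $\|S_\varphi\|\le\|S_\varphi\|_{cb}$.
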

\begin{proof}
Let $\mathcal{D}_n$ be the subalgebra of all the diagonal matrices in $M_n$. We will use the idea of the proof of \cite[proposition 8.6]{Paulsen} . For any $C \in \mathcal{D}_n$, we have
\begin{equation*}\begin{split}
& S_{\varphi}(C(v_{i,j})_{i,j=1}^n)=C(\varphi_{i,j}(v_{i,j}))_{i,j=1}^n,
\\&  S_{\varphi}((v_{i,j})_{i,j=1}^n C)=(\varphi_{i,j}(v_{i,j}))_{i,j=1}^n C
\end{split}\end{equation*}
where we define the multiplication between $C$ and elements of $M_n(V)$ or $M_n({C(\Omega)})$ by the multiplication of matrices. 
 Since $S_{\varphi}$ is $\mathcal{D}_n$-bimodule map, by the similar argument of \cite[Proposition 8.6]{Paulsen} we can prove that $S_{\varphi}$ is completely bounded.
\end{proof}

Now we may get the following proposition immediately:

\begin{proposition}\label{18050701}
Let $V$ be an operator space, $A$ a $C^{\ast}$-algebra which is $\ast$-isomorphic to a subalgebra of $M_n \otimes C(\Omega)$ for some $n \in \mathbb{N}$. Let $\varphi_{i,j}: V \to A$ be a bounded linear map (so it is completely bounded automatically) for each $i,j \in \mathbb{N}$, if $S_{\varphi}: \mathcal{O}_c(\ell^2) \otimes V \to \mathcal{O}_c(\ell^2)  \otimes A$ is a bounded linear map which satisfies
\begin{equation*}
S_{\varphi}((v_{i,j})_{i,j=1}^n)=(\varphi_{i,j}(v_{i,j}))_{i,j=1}^n,
\end{equation*}
then $S_{\varphi}$ is completely bounded and $\|S_{\varphi}\|_{cb} = \|S_{\varphi}\|$.
\end{proposition}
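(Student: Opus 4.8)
The plan is to reduce Proposition \ref{18050701} to the previous proposition by an approximation argument in the matrix levels. The point is that $\mathcal{O}_c(\ell^2)$ is the closure of the union of the finite matrix algebras $M_N$ (embedded in the top-left corner), so $\mathcal{O}_c(\ell^2)\otimes V$ is the closure of the union of the $M_N(V)$, and similarly on the target side. First I would fix $N\in\mathbb{N}$ and restrict $S_\varphi$ to $M_N(\ell^2)\otimes V\cong M_N(V)$, landing in $M_N(A)$. Here $A\hookrightarrow M_n\otimes C(\Omega)$, so $M_N(A)\hookrightarrow M_N(M_n\otimes C(\Omega))\cong M_{Nn}\otimes C(\Omega)=M_{Nn}(C(\Omega))$. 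Thus the restriction of $S_\varphi$ is a map of the form treated in the preceding proposition (with $Nn$ in place of $n$, the component maps being the obvious compositions of the $\varphi_{i,j}$ with the inclusion), hence completely bounded with $\|S_\varphi|_{M_N(V)}\|_{cb}=\|S_\varphi|_{M_N(V)}\|\le\|S_\varphi\|$.

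Next I would take the supremum over $N$. Since every element of $M_m(\mathcal{O}_c(\ell^2)\otimes V)$ can be approximated in norm by elements coming from $M_m(M_N(V))$ for $N$ large (cutting down by the finite-rank projections $P_N\otimes 1$ and using that $P_N\to 1$ strongly, so $(P_N\otimes 1)T(P_N\otimes 1)\to T$ in norm for $T$ in the minimal tensor product with compact first leg), and since $S_\varphi$ is assumed bounded hence norm-continuous, one gets $\|(S_\varphi)_m\|\le\sup_N\|S_\varphi|_{M_N(V)}\|_{cb}\le\|S_\varphi\|$ for every $m$. Therefore $\|S_\varphi\|_{cb}\le\|S_\varphi\|$; the reverse inequality is trivial, giving the claim.

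The main technical point to handle carefully is the compatibility of the cut-downs with $S_\varphi$: one needs that $S_\varphi$ commutes (in the appropriate sense) with compression by $P_N\otimes 1$, i.e. that $S_\varphi((P_N\otimes 1)T(P_N\otimes 1))$ is the compression of $S_\varphi(T)$. This is immediate from the entrywise description $S_\varphi((v_{i,j}))=(\varphi_{i,j}(v_{i,j}))$, since compression by $P_N$ just zeroes out the entries with index $>N$ and $\varphi_{i,j}(0)=0$. Granting that, the approximation $(P_N\otimes 1)T(P_N\otimes 1)\to T$ in the minimal tensor norm (valid because the first leg is compact, so finite-rank truncations converge) transfers through the bounded map $S_\varphi$ and then through the (already established) uniform complete boundedness of the truncations. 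I expect the only real obstacle to be writing the truncation/approximation step cleanly at arbitrary matrix level $m$, since one is simultaneously truncating inside $\mathcal{O}_c(\ell^2)$ and working in $M_m$ of the tensor product; but this is a routine $\varepsilon/3$ argument once the commutation with compressions is noted.
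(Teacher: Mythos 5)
Your overall strategy --- establish the norm equality on the finite corners $M_N(V)$ and then pass to the limit by compressing with $P_N\otimes 1$ --- is reasonable, and your second step is essentially correct: since the entrywise form of $S_\varphi$ makes $(S_\varphi)_m$ commute with these compressions, and since $(1_m\otimes P_N\otimes 1)\,S\,(1_m\otimes P_N\otimes 1)\to S$ in norm for $S$ in a minimal tensor product whose first leg is compact, one gets $\|(S_\varphi)_m\|\le\sup_N\|S_\varphi|_{M_N(V)}\|_{cb}$ without even needing continuity of the amplifications. The gap is in the first step. The restriction $S_\varphi|_{M_N(V)}\colon M_N(V)\to M_N(A)\subseteq M_{Nn}(C(\Omega))$ is \emph{not} of the form covered by the preceding proposition with $Nn$ in place of $n$: that proposition requires a map defined on $M_{Nn}(V)$ which is entrywise with component maps taking values in the commutative algebra $C(\Omega)$, whereas your map has domain $M_N(V)$ and is only ``block-entrywise'' --- all $n^2$ entries of the $(i,j)$-th $n\times n$ block of the output depend on the single input entry $v_{i,j}$. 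Composing $\varphi_{i,j}$ with the inclusion $A\hookrightarrow M_n\otimes C(\Omega)$ produces component maps into $M_n(C(\Omega))$, not into $C(\Omega)$, so the hypothesis of the preceding proposition is simply not met.

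If you try to force the map into the required entrywise shape you must precompose with the block-constant embedding $\iota\colon M_N(V)\to M_{Nn}(V)$, $\iota(w)_{(i,a),(j,b)}=w_{i,j}$; under the shuffle $M_{Nn}(V)\cong M_N(V)\otimes M_n$ this is $w\mapsto w\otimes J_n$ with $J_n$ the all-ones matrix, so $\|\iota\|_{cb}=\|J_n\|=n$, which destroys the claimed equality of norms (and the norm of the auxiliary entrywise map on all of $M_{Nn}(V)$ is not controlled by $\|S_\varphi\|$ either, since it sees inputs that are not in the range of $\iota$). What the first step actually requires is a genuine strengthening of the preceding proposition to $\mathcal{D}_N$-bimodule maps with target $M_N(A)$ for $A$ a subalgebra of $M_n\otimes C(\Omega)$ --- for instance via Smith's module-map machinery, using that $\mathcal{D}_N\otimes 1_n$ admits a cyclic set of $n$ vectors so that $\|\Phi\|_{cb}=\|\Phi_n\|$, after which one must still argue that $\|\Phi_n\|$ is controlled by $\|S_\varphi\|$. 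As written, the reduction ``with $Nn$ in place of $n$'' does not go through.
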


Combine Lemma \ref{18051601} and Proposition \ref{18050701} we get:
\begin{proposition}\label{18051602}
If $A$ is a $C^{\ast}$-algebra which is $\ast$-isomorphic to a subalgebra of $M_n \otimes C(\Omega)$ for some $n \in \mathbb{N}$ and compact space $\Omega$, then the compact Schur $A$-multipliers are completely compact.
\end{proposition}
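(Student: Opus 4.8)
The plan is to combine the two results flagged just before the statement: Lemma~\ref{18051601}, which says that every compact linear map on such an $A$ is automatically completely compact, and Proposition~\ref{18050701}, which upgrades boundedness to complete boundedness (with equality of norms) for the relevant Schur-type maps on $\mathcal{O}_c(\ell^2)\otimes A$. Let $\varphi:\mathbb{N}\times\mathbb{N}\to CB(A)$ be a compact Schur $A$-multiplier, so that $S_\varphi:\mathcal{O}_c(\ell^2)\otimes A\to\mathcal{O}_c(\ell^2)\otimes A$ is a compact operator. We must produce, for each $\epsilon>0$, a finite-dimensional subspace $F$ of $\mathcal{O}_c(\ell^2)\otimes A$ witnessing complete compactness in the sense of the definition; equivalently (using the fact recorded after the definition of complete compactness, \cite[Proposition 3.2]{compactness}), it suffices to approximate $S_\varphi$ in the completely bounded norm by finite-rank maps.

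First I would use compactness of $S_\varphi$ to obtain a sequence of finite-rank bounded maps $\Phi_m:\mathcal{O}_c(\ell^2)\otimes A\to\mathcal{O}_c(\ell^2)\otimes A$ with $\|\Phi_m-S_\varphi\|\to0$. The natural way to build these while keeping the Schur (entrywise) structure is to compose $S_\varphi$ with the finite-rank completely positive maps coming from nuclearity of $M_n\otimes C(\Omega)$, exactly as in the proof of Lemma~\ref{18051601}: identify $A\subseteq M_n\otimes C(\Omega)$, take the net $\psi_k:M_n\otimes C(\Omega)\to M_n\otimes C(\Omega)$ of finite-rank completely positive maps with $\psi_k(a)\to a$ pointwise, and form $(\mathrm{id}\otimes\psi_k)\circ S_\varphi$. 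Since $S_\varphi$ is compact and $\{\|\mathrm{id}\otimes\psi_k\|\}$ is bounded, Lemma~\ref{power} gives norm convergence $(\mathrm{id}\otimes\psi_k)\circ S_\varphi\to S_\varphi$ in $\mathcal{O}(\mathcal{O}_c(\ell^2)\otimes A)$.

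Second, I would observe that each $(\mathrm{id}\otimes\psi_k)\circ S_\varphi$ is again of the entrywise form covered by Proposition~\ref{18050701} (a Schur-type map into $\mathcal{O}_c(\ell^2)\otimes(M_n\otimes C(\Omega))$), hence is completely bounded with cb-norm equal to its ordinary norm; in particular the difference $(\mathrm{id}\otimes\psi_k)\circ S_\varphi-(\mathrm{id}\otimes\psi_j)\circ S_\varphi$ has cb-norm equal to its operator norm, so $\{(\mathrm{id}\otimes\psi_k)\circ S_\varphi\}$ is Cauchy in the cb-norm and converges to $S_\varphi$ completely. Finally, each $(\mathrm{id}\otimes\psi_k)\circ S_\varphi$ has finite rank (because $\psi_k$ does, and the relevant part of $S_\varphi$ only involves finitely many entries up to the cutoff built into the finite-rank structure — here one may also intersperse the finite truncations $\mathcal{E}_n$ if needed to get genuinely finite rank on $\mathcal{O}_c(\ell^2)\otimes A$), so $S_\varphi$ is a cb-limit of finite-rank maps and therefore completely compact by \cite[Proposition 3.2]{compactness}.

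The main obstacle is the bookkeeping in the last step: making sure the approximating maps are simultaneously (a) of the entrywise Schur form so that Proposition~\ref{18050701} applies and the cb-norm collapses to the operator norm, and (b) genuinely finite rank as maps on $\mathcal{O}_c(\ell^2)\otimes A$ rather than merely having finite-rank ``symbol''. One clean way is to first replace $S_\varphi$ by $S_\varphi\circ(\mathcal{E}_N\otimes\mathrm{id})$-type truncations in the $\mathcal{O}_c(\ell^2)$ variable to control the infinitely many matrix entries, then apply the nuclearity argument in the $A$ variable; the norm estimates needed to see that these truncations converge in cb-norm are exactly the ones supplied by Lemma~\ref{power}, Lemma~\ref{18051601}, and Proposition~\ref{18050701}, so no new analytic input is required beyond careful assembly.
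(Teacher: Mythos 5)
Your proposal is correct and is essentially the argument the paper intends, since the paper offers no written proof beyond ``combine Lemma~\ref{18051601} and Proposition~\ref{18050701}'': approximate $S_\varphi$ in operator norm using compactness together with the pointwise-convergent bounded nets (finite truncations in the $\ell^2$ variable and the finite-rank completely positive maps from nuclearity of $M_n\otimes C(\Omega)$) via Lemma~\ref{power}, and then use Proposition~\ref{18050701} to upgrade the operator-norm convergence of these entrywise maps to cb-norm convergence, so that $S_\varphi$ is a cb-limit of finite-rank maps. You have also correctly identified, and correctly resolved, the one point the paper glosses over, namely that $\mathrm{id}\otimes\psi_k$ alone is not finite rank and a simultaneous truncation in the $\mathcal{O}_c(\ell^2)$ variable is needed.
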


\bigskip

By modifying the proof of \cite{completelycompact}, it is not hard to prove:

\begin{lemma}\label{counterexample}
Let $H$ be a Hilbert space with infinite dimension, $I$ and index set which has the same cardinal number of $H$ when we regard $H$ as a set.  Let us select an arbitrary pairwise orthogonal family $\{H_i\}_{i \in I}$  of finite-dimensional subspaces of $H$ such that $\sum_{i \in I} H_i=H$ and that there is a subset $\{i_k\}_{k \in \mathbb{N}} \subset I$ such that $dim(H_k)>k$, then there exists a linear map $\varphi: \mathcal{O}_c(H) \to \mathcal{O}_c(H)$ such that $\varphi(\mathcal{O}(H_k)) \subset \mathcal{O}(H_k)$ and that $\varphi$ is completely bounded, compact, but not completely compact.
\end{lemma}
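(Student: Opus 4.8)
The plan is to carry over, into the prescribed decomposition, the classical example of a completely bounded, compact, non-completely-compact endomorphism of $\mathcal{O}_c(H)$: scaled transpositions on finite-dimensional blocks whose dimensions tend to infinity. Concretely, I would fix for each $k$ an orthonormal basis of $H_{i_k}$, identify $\mathcal{O}(H_{i_k})=M_{n_k}$ with $n_k:=\dim H_{i_k}>k$, let $\tau_k\colon M_{n_k}\to M_{n_k}$ be transposition (so $\|\tau_k\|=1$ while $\|\tau_k\|_{cb}=n_k$, cf.\ \cite{Paulsen}), write $P_i$ for the orthogonal projection of $H$ onto $H_i$, and set
\[
\varphi\colon\mathcal{O}_c(H)\to\mathcal{O}_c(H),\qquad \varphi(T)=\sum_{k\in\mathbb{N}}\tfrac1{n_k}\,\tau_k\bigl(P_{i_k}TP_{i_k}\bigr).
\]
The summands are supported on the mutually orthogonal $H_{i_k}$ and have norms at most $\frac1{n_k}\|T\|\le\frac1{k+1}\|T\|$, so the series converges in operator norm to a compact operator; hence $\varphi$ is well defined with $\|\varphi\|\le\tfrac12$, and $\varphi(\mathcal{O}(H_i))\subseteq\mathcal{O}(H_i)$ for every $i$ (it is $0$ on the blocks $H_i$ with $i\notin\{i_k\}$).

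Then I would dispatch the two easy properties. For compactness, $\varphi$ is the operator-norm limit of the finite-rank maps $T\mapsto\sum_{k\le m}\frac1{n_k}\tau_k(P_{i_k}TP_{i_k})$ (finite rank since each $H_{i_k}$ is finite-dimensional), with error $\sup_{k>m}\frac1{n_k}\le\frac1{m+2}$. For complete boundedness, $\varphi$ factors as
\[
\mathcal{O}_c(H)\ \xrightarrow{\ \Pi\ }\ \sum_{k}^{\oplus0}\mathcal{O}(H_{i_k})\ \xrightarrow{\ \bigoplus_k\frac1{n_k}\tau_k\ }\ \sum_{k}^{\oplus0}\mathcal{O}(H_{i_k})\ \xrightarrow{\ \iota\ }\ \mathcal{O}_c(H),
\]
where $\Pi(T)=(P_{i_k}TP_{i_k})_k$ is completely contractive (compression onto $\bigoplus_k H_{i_k}$ followed by the block-diagonal conditional expectation) and $\iota$ is a $\ast$-isomorphism onto the block-diagonal compacts, hence a complete isometry; since the cb-norm of a $C_0$-direct sum of completely bounded maps is the supremum of their cb-norms, $\|\varphi\|_{cb}=\sup_k\frac1{n_k}\|\tau_k\|_{cb}=\sup_k\frac1{n_k}n_k=1$.

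The heart of the matter is the failure of complete compactness, which I would prove directly from the definition. Assume $\varphi$ is completely compact and apply the definition with $\epsilon=\tfrac14$, obtaining a finite-dimensional $F\subseteq\mathcal{O}_c(H)$, $\dim F=N$, with $\mathrm{dist}(\varphi^{(m)}(x),M_m(F))<\tfrac14$ for all $m$ and all $x$ in the unit ball of $M_m(\mathcal{O}_c(H))$. Since the unit ball of $F$ is norm-compact and consists of compact operators, there is a finite $I_0\subseteq I$ with $\|z-P_{I_0}zP_{I_0}\|\le\delta\|z\|$ for all $z\in F$, where $P_{I_0}$ projects onto $\bigoplus_{i\in I_0}H_i$ and $\delta$ is as small as we wish. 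Fix $k$ with $i_k\notin I_0$, put $m=n_k$, and take $x=\iota_k^{(n_k)}(F_{n_k})$, where $\iota_k\colon\mathcal{O}(H_{i_k})\hookrightarrow\mathcal{O}_c(H)$ is the inclusion and $F_{n_k}=\sum_{p,q}e_{pq}\otimes e_{qp}\in M_{n_k}(M_{n_k})$ is the flip (a unitary, so $\|x\|=1$). A short computation gives $(\frac1{n_k}\tau_k)^{(n_k)}(F_{n_k})=P_{\zeta_k}$, the rank-one projection onto the maximally entangled vector $\zeta_k=\frac1{\sqrt{n_k}}\sum_p e_p\otimes e_p$, so $\varphi^{(n_k)}(x)=\iota_k^{(n_k)}(P_{\zeta_k})$ has norm $1$ and is supported on $H_{i_k}$. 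Compressing by the completely contractive $Q_k^{(n_k)}$ (with $Q_k\colon T\mapsto P_{i_k}TP_{i_k}$), which fixes $\varphi^{(n_k)}(x)$, gives for every $y\in M_{n_k}(F)$ that $\|\varphi^{(n_k)}(x)-y\|\ge\|P_{\zeta_k}-Q_k^{(n_k)}(y)\|$ with $Q_k^{(n_k)}(y)\in M_{n_k}\otimes G_k$, $G_k:=Q_k(F)$, $\dim G_k\le N$; hence $\mathrm{dist}(P_{\zeta_k},\,M_{n_k}\otimes G_k)<\tfrac14$. This is precisely where one adapts \cite{completelycompact}: the needed estimate is that the maximally entangled projection $P_{\zeta_k}$ — which, as an element of $M_{n_k}\otimes M_{n_k}$, has full tensor rank $n_k^2$, the associated linear map being the identity of $M_{n_k}$ — cannot be approximated in operator norm to within $\tfrac14$ by any $W\in M_{n_k}\otimes G_k$ with $\dim G_k\le N$ once $n_k$ is large relative to $N$. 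Since $n_k>k\to\infty$, this contradicts the previous inequality, so $\varphi$ is not completely compact.

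The main obstacle is exactly this last estimate: the quantitative "incompressibility" of the flip/transpose, i.e.\ that a subspace $M_n\otimes G$ with $\dim G$ fixed stays at definite operator-norm distance from $P_\zeta$ as $n\to\infty$ — this is the mechanism that makes the example work in \cite{completelycompact}, and the "modification" is only the packaging of it inside the prescribed blocks $H_{i_k}$, for which the hypothesis $\dim H_{i_k}>k$ is all that is used. Everything else is routine: norm convergence of the defining series, the standard values $\|\tau_n\|=1$ and $\|\tau_n\|_{cb}=n$, factorization through $C_0$-direct sums, and the fact that the compressions in the last step are genuine complete contractions. One should also note in passing that without loss of generality the approximants $y$ in the last step may be taken of norm $\le 2$, since any $W$ with $\|W\|$ large is automatically far from $P_{\zeta_k}$; this merely keeps the distance computation over a bounded set.
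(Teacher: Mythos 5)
Your construction is exactly the one the paper has in mind (the paper offers no proof, only a pointer to the reference): the block map $\varphi(T)=\sum_k\frac{1}{n_k}\tau_k(P_{i_k}TP_{i_k})$, its well-definedness, its compactness, the identity $\|\varphi\|_{cb}=\sup_k\frac1{n_k}\|\tau_k\|_{cb}=1$, and the computation $(\frac1{n_k}\tau_k)^{(n_k)}(F_{n_k})=P_{\zeta_k}$ are all correct. The problem is the last step, which you yourself flag as ``the main obstacle'' and then do not prove: the claim that $\mathrm{dist}(P_{\zeta_k},M_{n_k}\otimes G_k)\ge\frac14$ for an \emph{arbitrary} subspace $G_k\subseteq M_{n_k}$ with $\dim G_k\le N$, once $n_k$ is large. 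This is not a citation-level fact, and the constant matters: in the definition of complete compactness $\epsilon$ is chosen \emph{before} $F$, hence before $N=\dim F$, so your contradiction needs a lower bound on the distance that is independent of $N$. The elementary arguments available (slice maps: every contractive left or right slice of $P_\zeta$ has norm $\frac1n$, so writing $W=\sum_{j=1}^N A_j\otimes B_j$ with $\{B_j\}$ an Auerbach basis forces $\|A_j\|\le\epsilon+\frac1n$ and hence $1-\epsilon\le\|W\|\le N(\epsilon+\frac1n)$; or the dual witness $(\mathrm{id}\otimes\Pi_{G^\perp})(P_\zeta)$) only yield $\epsilon\gtrsim\frac1{N+1}$, which is useless here. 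Whether the $N$-independent bound is even true is a genuinely nontrivial question about low operator-Schmidt-rank approximation of the maximally entangled projection, and your reduction to it has discarded the information that would settle the matter.

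That discarded information is the fix. The subspace $F$ produced by complete compactness is a fixed finite-dimensional space of \emph{compact} operators, so its unit ball is norm-compact and there is a finite-rank projection $p$ on $H$ with $\|z-pzp\|\le\delta\|z\|$ for all $z\in F$; by Auerbach's lemma the map $z\mapsto z-pzp$ restricted to $F$ then has cb-norm at most $N\delta$, so $\|y-(1\otimes p)y(1\otimes p)\|\le N\delta\|y\|$ for $y\in M_m(F)$. Since $p$ has finite rank, $\|pP_{i_k}\|\to0$ as $k\to\infty$, so for $y\in M_m(F)$ with $\|y\|\le2$ the compression $Q_k^{(m)}(y)=(1\otimes P_{i_k})y(1\otimes P_{i_k})$ satisfies $\|Q_k^{(m)}(y)\|\le 2N\delta+2\|pP_{i_k}\|^2\to 2N\delta$, uniformly in $m$. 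Choosing $\delta$ small and then $k$ large, every admissible approximant $y$ of $\varphi^{(n_k)}(x)=P_{\zeta_k}$ is, after the (completely contractive, $P_{\zeta_k}$-fixing) compression $Q_k^{(n_k)}$, of norm less than $\frac12$, whereas $\|P_{\zeta_k}\|=1$; this contradicts $\mathrm{dist}(\varphi^{(n_k)}(x),M_{n_k}(F))<\frac14$ with no incompressibility estimate needed. In short: keep everything up to and including the compression by $Q_k$, but replace the unproved distance estimate for arbitrary $G_k$ by the observation that $Q_k^{(m)}$ crushes the whole $2$-ball of $M_m(F)$ to something of small norm because $F$ consists of compact operators and the blocks $H_{i_k}$ escape every finite-rank projection.
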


The lemma is similar to the previous:
\begin{lemma}\label{counterexample2}
Let $H$ be a Hilbert space with infinite dimension, $I$ and index set which has the same cardinal number of $H$ when we regard $H$ as a set.  Let us select an arbitrary pairwise orthogonal family $\{H_i\}_{i \in I}$  of finite-dimensional subspaces of $H$ such that $\sum_{i \in I} H_i=H$ and that there is a subset $\{i_k\}_{k \in \mathbb{N}} \subset I$ such that $dim(H_k)>k$, then there exists a linear map $\theta: \sum_{i \in I}^{\oplus 0} \mathcal{O}(H_i)
 \to \sum_{i \in I}^{\oplus0} \mathcal{O}(H_i)$ such that $\theta(\mathcal{O}(H_i)) \subset \mathcal{O}(H_i)$ and that $\theta$ is completely bounded, compact, but not completely compact.
\end{lemma}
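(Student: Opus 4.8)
The plan is to repeat, on the $C_0$-direct sum, the construction used to prove Lemma~\ref{counterexample} (which in turn modifies the example of \cite{completelycompact}). Put $n_i=\dim H_i$ and, after fixing an orthonormal basis of each $H_i$, identify $\mathcal{O}(H_i)$ with $M_{n_i}$; write $B=\sum_{i\in I}^{\oplus 0}\mathcal{O}(H_i)$. For $m\in\mathbb{N}$ let $\tau_m\colon M_m\to M_m$ be the transpose map, for which $\|\tau_m\|=1$ while $\|\tau_m\|_{cb}=m$ (see \cite{Paulsen}). Define $\theta\colon B\to B$ block-diagonally by $\theta|_{\mathcal{O}(H_{i_k})}=n_{i_k}^{-1}\tau_{n_{i_k}}$ for $k\in\mathbb{N}$ and $\theta|_{\mathcal{O}(H_i)}=0$ for $i\notin\{i_k:k\in\mathbb{N}\}$. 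The block $cb$-norms are uniformly bounded, so $\theta$ is a well-defined bounded map on $B$, and by construction $\theta(\mathcal{O}(H_i))\subset\mathcal{O}(H_i)$ for every $i\in I$.

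First I would dispatch the two easy properties. Since the $cb$-norm of a block-diagonal map on a $C_0$-direct sum equals the supremum of the $cb$-norms of its blocks, and each nonzero block has $cb$-norm $n_{i_k}^{-1}\|\tau_{n_{i_k}}\|_{cb}=1$, we get $\|\theta\|_{cb}=1$; in particular $\theta$ is completely bounded. For compactness, the hypothesis $n_{i_k}=\dim H_{i_k}>k$ forces the block norms $\|\theta|_{\mathcal{O}(H_{i_k})}\|=n_{i_k}^{-1}<1/k$ to tend to $0$, so $\theta$ is the operator-norm limit of the finite-rank maps obtained by truncating to the first $N$ distinguished blocks; hence $\theta\in\mathfrak{CO}(B)$.

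The substantive step — and the place I expect the real work to be — is to show $\theta\notin\mathfrak{CCO}(B)$. Suppose a finite-dimensional $F\subset B$ witnesses complete compactness of $\theta$ for $\epsilon=\tfrac14$, and put $d=\dim F$. Let $Q_i\colon B\to\mathcal{O}(H_i)$ be the canonical block projection, which is a conditional expectation and a complete contraction. For each $b\in F$ the scalar family $(\|Q_ib\|)_{i\in I}$ lies in $c_0(I)$, so a compactness argument on the (norm-compact) unit ball of $F$ produces a finite set $S\subset I$ with $\|Q_i|_F\|<\tfrac1{8d}$ for $i\notin S$; combining this with the standard estimate $\|u\|_{cb}\le(\dim E)\|u\|$ for a linear map $u$ out of a finite-dimensional operator space $E$ (\cite{Paulsen}) gives $\|Q_i|_F\|_{cb}<\tfrac18$ for all $i\notin S$. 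Now choose $k$ with $i_k\notin S$. Since $\|\theta|_{\mathcal{O}(H_{i_k})}\|_{cb}=1$, pick $N\in\mathbb{N}$ and $x\in M_N(\mathcal{O}(H_{i_k}))$ with $\|x\|\le1$ and $\|\theta^{(N)}(x)\|>\tfrac34$; regarding $x$ in $M_N(B)$, the element $\theta^{(N)}(x)$ is supported in the block $i_k$ and is therefore fixed by $Q_{i_k}^{(N)}$. Because $\|\theta^{(N)}(x)\|\le1$, the distance $\mathrm{dist}(\theta^{(N)}(x),M_N(F))$ is unchanged if we restrict to $f\in M_N(F)$ with $\|f\|\le 2$, and for such $f$
\begin{equation*}
\|\theta^{(N)}(x)-f\|\;\ge\;\|Q_{i_k}^{(N)}\bigl(\theta^{(N)}(x)-f\bigr)\|\;\ge\;\|\theta^{(N)}(x)\|-\|Q_{i_k}|_F\|_{cb}\,\|f\|\;>\;\tfrac34-\tfrac18\cdot 2\;=\;\tfrac12 .
\end{equation*}
Hence $\mathrm{dist}(\theta^{(N)}(x),M_N(F))\ge\tfrac12>\epsilon$, contradicting the choice of $F$. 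Thus $\theta$ is completely bounded and compact but not completely compact.

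The only genuinely delicate point in this plan is the reduction that a finite-dimensional subspace $F$ is $cb$-negligible off a finite set of blocks (the passage from $\|Q_i|_F\|$ small to $\|Q_i|_F\|_{cb}$ small, via $\dim F<\infty$); once that is secured, everything else is the same bookkeeping that underlies Lemma~\ref{counterexample}, with $\mathcal{O}_c(H)$ replaced throughout by $B=\sum_{i\in I}^{\oplus 0}\mathcal{O}(H_i)$.
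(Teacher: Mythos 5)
Your construction is correct and complete: the block-diagonal map built from the normalized transposes $n_{i_k}^{-1}\tau_{n_{i_k}}$ is exactly the standard example the paper alludes to (it gives no proof, only the remark that the lemma is ``similar to the previous'' one, which in turn defers to modifying \cite{completelycompact}), and your argument that a finite-dimensional $F$ is $cb$-negligible off finitely many blocks --- via total boundedness of the unit ball of $F$ together with the rank estimate $\|u\|_{cb}\le(\dim F)\|u\|$ --- correctly supplies the one genuinely non-routine step. No gaps.
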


\begin{proposition}\label{1}
Let $A$ be a compact-type $C^{\ast}$-algebra such that all compact completely bounded  linear map $\varphi: A \to A$ is completely compact, then  $A$ is $\ast$-isomorphic to $\sum_{k \in I} ^{\oplus0}  M_{n_k}$, $n_k \leq N$ for some $N \in \mathbb{N}$, where $I$ is an index set.
\end{proposition}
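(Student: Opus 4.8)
The plan is to argue by contraposition: if $A = \sum_{i \in I}^{\oplus 0} \mathcal{O}_c(H_i)$ is a compact-type $C^{\ast}$-algebra that is \emph{not} of the asserted form, then I want to produce a compact completely bounded linear map $A \to A$ that fails to be completely compact, thereby contradicting the hypothesis. The failure to be of the form $\sum_{k}^{\oplus 0} M_{n_k}$ with uniformly bounded $n_k$ means exactly one of two things: either (a) some summand $\mathcal{O}_c(H_i)$ is infinite-dimensional (so $\dim H_i = \infty$ for some $i$), or (b) all $H_i$ are finite-dimensional but their dimensions are unbounded, i.e. there is a sequence $i_k$ with $\dim H_{i_k} > k$. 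I would handle these two cases separately, and in each case the point is to assemble a single infinite-dimensional Hilbert space (or $C^\ast$-direct sum) on which Lemma~\ref{counterexample} or Lemma~\ref{counterexample2} applies.

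In case (b), the construction is essentially immediate from Lemma~\ref{counterexample2}: taking $H = \sum_{i \in I}^{\oplus} H_i$ with the given finite-dimensional pieces $\{H_i\}_{i \in I}$ and the subsequence $\{i_k\}$ with $\dim H_{i_k} > k$, Lemma~\ref{counterexample2} yields a completely bounded, compact, non-completely-compact map $\theta$ on $\sum_{i\in I}^{\oplus 0}\mathcal{O}(H_i)$ which moreover satisfies $\theta(\mathcal{O}(H_i)) \subset \mathcal{O}(H_i)$. Since each $H_i$ is finite-dimensional, $\mathcal{O}(H_i) = \mathcal{O}_c(H_i)$, so $\sum_{i\in I}^{\oplus 0}\mathcal{O}(H_i) = A$ and $\theta$ is a map of the desired kind, contradicting the hypothesis. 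In case (a), fix an $i_0$ with $H_{i_0}$ infinite-dimensional; I would apply Lemma~\ref{counterexample} to $\mathcal{O}_c(H_{i_0})$ to obtain $\varphi_0 : \mathcal{O}_c(H_{i_0}) \to \mathcal{O}_c(H_{i_0})$ completely bounded, compact, not completely compact, and then extend it to all of $A$ by letting it act as $0$ on all the other summands. This extended map $\varphi$ is still completely bounded and compact (a direct sum of a compact map with the zero map), and it remains non-completely-compact because, using Remark~\ref{hfsadfjklhvuivrioreiu} with the conditional-expectation-type projection $h : A \to \mathcal{O}_c(H_{i_0})$ onto the $i_0$-summand (which is completely bounded and fixes that summand pointwise), complete compactness of $\varphi$ would force complete compactness of $\varphi_0$.

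The main obstacle is bookkeeping rather than conceptual: I need to verify that the extension-by-zero in case (a) genuinely preserves compactness (this is routine, as the image lies in the finite-dimensional-obstruction-free summand and the map is literally a compressed copy of $\varphi_0$), and, more importantly, that Lemma~\ref{counterexample} applies to $\mathcal{O}_c(H_{i_0})$ in the precise form needed — namely that an infinite-dimensional $H_{i_0}$ can be decomposed into pairwise orthogonal finite-dimensional subspaces whose dimensions contain an unbounded subsequence, which is clear. One should also double-check the cardinality hypothesis in Lemma~\ref{counterexample} (that the index set has the same cardinality as $H$ viewed as a set); since we may always refine or pad the orthogonal decomposition this causes no difficulty. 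Finally, I would note that the converse direction — that algebras of the form $\sum_{k \in I}^{\oplus 0} M_{n_k}$ with $n_k \le N$ do have the property that compact c.b. maps are completely compact — is not asserted here and need not be proved, so the argument above suffices.
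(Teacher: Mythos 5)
Your proposal is correct and follows essentially the same route as the paper: the same case split (an infinite-dimensional summand $H_i$ versus finite-dimensional summands of unbounded dimension), the same use of Lemma \ref{counterexample} plus extension by zero in the first case, and the same appeal to Lemma \ref{counterexample2} in the second. The only (harmless) difference is that in the unbounded-dimension case you apply Lemma \ref{counterexample2} directly to the given decomposition $A=\sum_{i\in I}^{\oplus 0}\mathcal{O}_c(H_i)$, whereas the paper embeds $B=\sum_{i\in\mathbb{N}}^{\oplus 0}M_i$ into $A$ and composes with a conditional expectation before invoking Lemma \ref{negativecriteria}; both arguments work, and yours is slightly more direct.
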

\begin{proof}

We assume  A= $\sum_{k \in I} ^{\oplus0}  \mathcal{O}_c(H_k)$,  $H=\sum^{\oplus}_{k \in I}H_k$. So there are three cases:

$(1)$ $H_k$ is of infinite dimension for some $k$;

$(2)$ Each $H_k$ is of finite-dimension but $\{{\rm{dim}}(H_k)\}_{k \in I}$ is unbounded, so it is convenient to assume that $ A=\sum_{k \in I} ^{\oplus0}  M_{n_k} $, and there is a subset $ \{n_{k_i}\}$ of $\{n_k: k \in I\}$ such that $i \leqq n_{k_i}$ for all $i \in \mathbb{N}$;

$(3)$ A= $\sum_{k \in I}^{\oplus0}  M_{n_k}$, $n_k \leq N$ for some $N \in \mathbb{N}$.

We need to prove that (1) and (2) are not true.

\bigskip
(1) is failed: Suppose $H_k$ is of infinite-dimension. By Lemma \ref{counterexample}, there is a map $\varphi: \mathcal{O}_c(H_k) \to \mathcal{O}_c(H_k)$ which is completely bounded, compact but not completely compact. Let $g: \sum_{i \in I} ^{\oplus0}  \mathcal{O}_c(H_i) \to \mathcal{O}_c(H_k)$ be the canonical extension of $\varphi$, that is, $g|\mathcal{O}_c(H_k)$=$\varphi$ and $g| \sum_{i \neq k}^{\oplus0} \mathcal{O}_c(H_i)=0$, then $g$ is compact, completely bounded, but by Lemma \ref{negativecriteria} it is easy to see that $g$ is not completely compact. But $g$ can be regarded as a compact, completely bounded but not completely compact linear map from $\sum_{i \in \mathbb{N}}^{\oplus0}\mathcal{O}_c(H_i)$ into $\sum_{i \in \mathbb{N}}^{\oplus0}\mathcal{O}_c(H_i)$, so (1) is failed.

\bigskip

(2) is failed: In this case, let $B= \sum_{i \in \mathbb{N}}^{\oplus0} M_i$. By Lemma \ref{counterexample}, there is a completely bounded linear map $\varphi: B \to B$ satisfing $\varphi(M_i) \subset M_i$ which is compact but not completely compact. But $B=\sum_{i \in \mathbb{N}}^{\oplus0} M_i$ is a norm-closed ${\ast}$-subalgebra of $A=\sum_{k \in I} ^{\oplus0} M_{n_{k}}$ (since $i \leq n_{k_i}$, $M_i \subset M_{n_{k_i}}$), and there is conditional expectation $E$ from $A$ to $B$. So by Lemma \ref{negativecriteria} it is easy to see that $\varphi \circ E: \sum_{k \in I} ^{\oplus0}  M_{n_k} \to B$ is compact and completely bounded, but it is not completely compact. Furthermore, $\varphi \circ E$ can be regarded as a linear map from $A$ into $A$ which is compact, completely bounded but not completely compact, so (2) is failed.
\end{proof}

\begin{proposition}\label{identification of c and cc}
Let  A= $\sum_{k \in I} ^{\oplus0}  M_{n_k}$ such that $n_k \leq N$ for some $N \in \mathbb{N}$, then for any $C^{\ast}$-algebra $B$, the linear map $\varphi: B \to A$ is compact if and only if $\varphi$ is completely compact.
\end{proposition}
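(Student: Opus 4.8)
The plan is as follows. One implication requires no work: taking $m=1$ in the definition of complete compactness shows immediately that every completely compact map is compact, so the whole content of the proposition is that a compact linear map $\varphi\colon B\to A$ is automatically completely compact. (Note $\varphi$, being compact, is in particular bounded.)

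The crucial point is that, because the matrix sizes are uniformly bounded by $N$, every bounded linear map $u\colon B\to A=\sum_{k\in I}^{\oplus 0}M_{n_k}$ is automatically completely bounded with $\|u\|_{cb}\le N^{2}\|u\|$. Indeed, write $u=(u_k)_{k\in I}$ where $u_k\colon B\to M_{n_k}$ is the $k$-th coordinate; then $\|u_k\|\le\|u\|$, and expanding $u_k(x)=\sum_{p,q=1}^{n_k}u^{(k)}_{p,q}(x)\,e_{p,q}$ into its scalar entries $u^{(k)}_{p,q}\colon B\to\mathbb{C}$ and using that a bounded functional on an operator space is completely bounded with the same norm gives $\|u_k\|_{cb}\le n_k^{2}\|u_k\|\le N^{2}\|u\|$. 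Since $M_m\!\left(\sum_{k}^{\oplus 0}M_{n_k}\right)$ is isometrically $\sum_{k}^{\oplus 0}M_{mn_k}$, one has $\|u^{(m)}((x_{i,j}))\|=\sup_{k\in I}\|u_k^{(m)}((x_{i,j}))\|$ for all $(x_{i,j})\in M_m(B)$, and hence $\|u\|_{cb}\le N^{2}\|u\|$. Thus on the space of (completely) bounded maps $B\to A$ the norms $\|\cdot\|$ and $\|\cdot\|_{cb}$ are equivalent.

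With this in hand I would imitate the approximation argument in the proof of Lemma~\ref{18051601}. For a finite $F\subseteq I$ let $P_F\colon A\to A$ be the completely positive contractive projection onto $\bigoplus_{k\in F}M_{n_k}$; as $F$ increases, $P_F\to\mathrm{id}_A$ in the point-norm topology and $\sup_F\|P_F\|=1$. Since $\varphi$ is compact, Lemma~\ref{power} (with $\Psi=\varphi$, $\Phi_F=P_F$, $\Phi=\mathrm{id}_A$) gives $\|P_F\circ\varphi-\varphi\|\to 0$ in operator norm, and by the norm equivalence above this improves to $\|P_F\circ\varphi-\varphi\|_{cb}\to 0$. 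Each $P_F\circ\varphi$ has range in the finite-dimensional space $\bigoplus_{k\in F}M_{n_k}$ and is therefore of finite rank, so $\varphi$ is a cb-norm limit of finite-rank maps and hence completely compact by \cite[Proposition~3.2]{compactness}.

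I do not anticipate a real obstacle; the one place demanding care is the uniform-in-$k$ complete boundedness estimate for maps into the $c_0$-direct sum, and this is precisely where the hypothesis $n_k\le N$ enters (it genuinely fails for an unbounded family of matrix sizes, in accordance with Proposition~\ref{1}). An alternative packaging of the same proof is to embed $A$ completely isometrically into $M_N\otimes C(\Omega)$, with $\Omega=I\cup\{\infty\}$ the one-point compactification of the discrete set $I$ (so that $c_0(I)$ sits in $C(\Omega)$ as the ideal of functions vanishing at $\infty$), invoke nuclearity of $M_N\otimes C(\Omega)$ and automatic complete boundedness of maps into it to run the Lemma~\ref{18051601} argument, and then descend back to $A$ using Remark~\ref{hfsadfjklhvuivrioreiu} together with the completely bounded projection $M_N\otimes C(\Omega)\to A$ assembled from the corner compressions $M_N\to M_{n_k}$ and the projection $f\mapsto\bigl(f(k)-f(\infty)\bigr)_{k\in I}$ of $C(\Omega)$ onto $c_0(I)$.
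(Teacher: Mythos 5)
Your proof is correct, and it takes a somewhat more direct route than the paper. The paper's own argument embeds $A$ $\ast$-isomorphically into $M_N\otimes C(I\cup\{\infty\})$ and then appeals to the nuclearity argument of Lemma \ref{18051601} (via Proposition \ref{18051602}); note that as literally stated those results concern maps from $A$ into $A$ (respectively Schur multipliers), so some adaptation to the setting of an arbitrary domain $B$ is silently assumed there. Your primary argument instead works intrinsically with the $c_0$-direct sum: the uniform bound $n_k\le N$ yields $\|u\|_{cb}\le N^2\|u\|$ for every bounded $u\colon B\to A$ (your entrywise estimate is fine; Smith's lemma would even give $N\|u\|$), the finite-subset projections $P_F$ converge point-norm to $\mathrm{id}_A$ with $\sup_F\|P_F\|=1$, Lemma \ref{power} upgrades this to operator-norm convergence of $P_F\circ\varphi$ to $\varphi$ by compactness of $\varphi$, the norm equivalence upgrades it further to cb-norm convergence, and each $P_F\circ\varphi$ is finite rank, so \cite[Proposition 3.2]{compactness} applies. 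This buys you two things over the paper's version: it avoids invoking nuclearity of $M_N\otimes C(\Omega)$ altogether (the approximating net is explicit), and it handles the arbitrary domain $B$ without any adjustment. Your closing paragraph is essentially the paper's proof, so you have in fact recovered both arguments; the only small point worth recording explicitly in the first argument is the identification $M_m\bigl(\sum_{k}^{\oplus 0}M_{n_k}\bigr)=\sum_{k}^{\oplus 0}M_m(M_{n_k})$ with the supremum norm, which justifies $\|u^{(m)}\|=\sup_k\|u_k^{(m)}\|$; this holds because $A$ sits block-diagonally in $\mathcal{O}\bigl(\sum_k^{\oplus}H_k\bigr)$.
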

\begin{proof}
Since $A$ is $\ast$-isomorphic to a $C^{\ast}$-subalgebra of $M_N \otimes C(I \cup \{\infty\})$, and $I\cup \{\infty\}$ is compact space,  then the statement is an easy consequence of Proposition \ref{18051602}.
\end{proof}

Now we could summarize a theorem as following:

\begin{theorem}\label{18051001}
If $A$ is a compact-type $C^{\ast}$-algebra, then the following is equivalent: (i)Any compact completely bounded bounded linear map $\varphi: A \to A$ is completely compact; (ii)  A= $\sum_{k \in I} ^{\oplus0}  M_{n_k}$, $n_k \leq N$ for some $N \in \mathbb{N}$. If these conditions hold, then for any $C^{\ast}$-algebra $B$, if  linear map $\varphi: B \to \sum_{k \in I} ^{\oplus0}  M_{n_k}$ is compact, then it is completely compact.
\end{theorem}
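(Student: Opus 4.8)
The plan is to assemble this theorem from the two propositions immediately preceding it, adding only the observation that, for a $C^{\ast}$-algebra of the shape appearing in (ii), boundedness and complete boundedness coincide for linear maps into it. Nothing new needs to be computed; the content has already been packaged in Propositions \ref{1} and \ref{identification of c and cc}.

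First I would dispose of the implication $(i)\Rightarrow(ii)$, which is exactly Proposition \ref{1}: writing a compact-type $C^{\ast}$-algebra as $A=\sum_{k\in I}^{\oplus0}\mathcal{O}_c(H_k)$, there are three mutually exclusive possibilities — some $H_k$ is infinite dimensional; all $H_k$ are finite dimensional but $\{\dim H_k\}_{k\in I}$ is unbounded; or all $H_k$ are finite dimensional with $\dim H_k\le N$ for a single $N$. The proof of Proposition \ref{1} shows that each of the first two cases produces, via the counterexamples of Lemmas \ref{counterexample} and \ref{counterexample2} transported by Lemma \ref{negativecriteria}, a compact completely bounded map $A\to A$ that is not completely compact, so (i) forces the third case, which is precisely (ii).

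Then, for $(ii)\Rightarrow(i)$ together with the final assertion, I would invoke Proposition \ref{identification of c and cc}. Assuming $A=\sum_{k\in I}^{\oplus0}M_{n_k}$ with $n_k\le N$, that proposition says that for every $C^{\ast}$-algebra $B$ a compact linear map $\varphi:B\to A$ is completely compact; its proof rests on the embedding of $A$ as a $C^{\ast}$-subalgebra of $M_N\otimes C(I\cup\{\infty\})$, the compactness of $I\cup\{\infty\}$, and Proposition \ref{18051602} (hence Lemma \ref{18051601} and Proposition \ref{18050701}). Taking $B$ arbitrary gives the closing sentence of the theorem verbatim; taking $B=A$ gives that every compact bounded — equivalently, by the same embedding and the identity $\mathcal{CB}(\,\cdot\,,M_N\otimes C(\Omega))=\mathcal{O}(\,\cdot\,,M_N\otimes C(\Omega))$ used in the proof of Lemma \ref{18051601}, every compact completely bounded — linear map $A\to A$ is completely compact, i.e. (i). Since every ingredient is already in hand, I do not anticipate a genuine obstacle; the only point meriting an explicit remark is that, once $A$ has the form in (ii), the qualifier "completely bounded" in (i) is automatic for compact maps into $A$, so that (i) as stated is not strictly weaker than the hypothesis of Proposition \ref{identification of c and cc}.
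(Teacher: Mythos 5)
Your proposal is correct and matches the paper's intent exactly: the theorem is stated as a summary of Proposition \ref{1} (which gives (i)$\Rightarrow$(ii)) and Proposition \ref{identification of c and cc} (which gives (ii)$\Rightarrow$(i) together with the closing assertion). Your added remark that boundedness into a subalgebra of $M_N\otimes C(I\cup\{\infty\})$ already implies complete boundedness, so the qualifier in (i) is harmless, is a sensible clarification that the paper leaves implicit.
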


\bigskip

Now let us go back to study Schur $A$-multiplier defined on $\mathbb{N} \times \mathbb{N}$.

\begin{theorem}\label{akey}
 If $A \subset \mathcal{O}_c(H)$ is $C^{\ast}$-algebra, then $\mathfrak{CS}(A)=\mathfrak{CCS}(A)$ if and only if $A=\sum_{k \in I} ^{\oplus0}  M_{n_k}$ such that $n_k \leq N$ for some $N \in \mathbb{N}$ .
\end{theorem}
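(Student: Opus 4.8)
The plan is to derive this theorem by combining two structural results already established. On one side is Theorem~\ref{18051001}, which characterises exactly the compact-type $C^{\ast}$-algebras $A$ for which every compact completely bounded linear map $A\to A$ is completely compact, namely $A\cong\sum_{k\in I}^{\oplus0}M_{n_k}$ with the $n_k$ uniformly bounded. On the other side is Proposition~\ref{18051602}, which says that when $A$ is $\ast$-isomorphic to a subalgebra of $M_n\otimes C(\Omega)$ with $\Omega$ compact, every compact Schur $A$-multiplier is already completely compact. Since $A\subset\mathcal{O}_c(H)$, the algebra $A$ is of compact type, so $A=\sum_{k\in I}^{\oplus0}\mathcal{O}_c(H_k)$ and both results are available; moreover a completely compact Schur multiplier is always compact, so $\mathfrak{CCS}(A)\subseteq\mathfrak{CS}(A)$ automatically, and only the reverse inclusion is at issue.

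For the ``if'' direction I would argue as follows. Assume $A=\sum_{k\in I}^{\oplus0}M_{n_k}$ with $n_k\leq N$ for all $k$. As recorded in the proof of Proposition~\ref{identification of c and cc}, embedding each $M_{n_k}$ into a corner of $M_N$ identifies $A$ with a norm-closed $\ast$-subalgebra of $M_N\otimes C(I\cup\{\infty\})$, where $I\cup\{\infty\}$ is the (compact) one-point compactification of the discrete set $I$. Proposition~\ref{18051602} then applies directly and yields $\mathfrak{CS}(A)=\mathfrak{CCS}(A)$.

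For the ``only if'' direction I would argue by contraposition. Suppose $A$ is \emph{not} $\ast$-isomorphic to any $\sum_{k\in I}^{\oplus0}M_{n_k}$ with uniformly bounded $n_k$. By Theorem~\ref{18051001} there is a compact completely bounded linear map $\varphi\colon A\to A$ that is not completely compact. Promote $\varphi$ to a Schur $A$-multiplier by concentrating it in the first matrix entry: set $\psi\colon\mathbb{N}\times\mathbb{N}\to CB(A)$ by $\psi(1,1)=\varphi$ and $\psi(m,n)=0$ otherwise, which is bounded and (trivially, on $\mathbb{N}\times\mathbb{N}$) pointwise measurable. Writing $\iota\colon A\to\mathcal{O}_c(\ell^2)\otimes A$ for the complete isometry $a\mapsto e_{1,1}\otimes a$ and $\pi\colon\mathcal{O}_c(\ell^2)\otimes A\to A$ for the complete contraction $(T_{m,n})_{m,n}\mapsto T_{1,1}$, one reads off from the definition of $S_\psi$ that
\begin{equation*}
S_\psi=\iota\circ\varphi\circ\pi ,\qquad\text{and in particular}\qquad \varphi=\pi\circ S_\psi\circ\iota .
\end{equation*}
Hence $S_\psi$ is completely bounded, so $\psi$ genuinely is a Schur $A$-multiplier; it is compact because $\pi$ and $\iota$ are bounded and $\varphi$ is compact; thus $\psi\in\mathfrak{CS}(A)$. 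But if $\psi$ were completely compact then $\varphi=\pi\circ S_\psi\circ\iota$ would be completely compact, since complete compactness is preserved under composition with completely bounded maps (as used in Remark~\ref{hfsadfjklhvuivrioreiu}), contradicting the choice of $\varphi$. Therefore $\psi\in\mathfrak{CS}(A)\setminus\mathfrak{CCS}(A)$, so $\mathfrak{CS}(A)\neq\mathfrak{CCS}(A)$.

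The step needing the most care is this promotion in the ``only if'' direction: the earlier sections only provide a compact completely bounded self-map of $A$ which is not completely compact, whereas what is needed is a \emph{Schur $A$-multiplier} with these properties. The single-entry device (placing $\varphi$ in the $(1,1)$-slot of $\psi$) together with the factorisation $S_\psi=\iota\circ\varphi\circ\pi$ bridges this gap, and the two things to verify — that $S_\psi$ is genuinely compact, not merely completely bounded, and that complete compactness of $S_\psi$ would force complete compactness of $\varphi$ — both follow immediately from the factorisation. With that reduction in hand no further obstacle remains, the substance lying entirely in Theorem~\ref{18051001} and Proposition~\ref{18051602}.
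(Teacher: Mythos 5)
Your proof is correct and follows essentially the same route as the paper, which likewise derives the theorem by combining Theorem~\ref{18051001} with Proposition~\ref{18050701}/Proposition~\ref{18051602} and the embedding of $\sum_{k}^{\oplus0}M_{n_k}$ ($n_k\leq N$) into $M_N\otimes C(I\cup\{\infty\})$. Your explicit single-entry factorisation $S_\psi=\iota\circ\varphi\circ\pi$ supplies a detail the paper leaves implicit (how a non-completely-compact linear map on $A$ yields a non-completely-compact Schur $A$-multiplier), but it is the same argument in substance.
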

\begin{proof}
This is the combination of Proposition \ref{18050701} and Theorem \ref{18051001} because for $A \subset \mathcal{O}_c(H)$, $A=\sum_{k \in I} ^{\oplus0}  M_{n_k} $ implies that $A$ is $\ast$-isomorphic to a subalgebra of $M_n \otimes C(\Omega)$ for some $\Omega$ and $n$.
\end{proof}

\begin{corollary}
If $A$ is a finite dimension $C^{\ast}$-algebra, then $\mathfrak{CS}(A)=\mathfrak{CCS}(A)$.
\end{corollary}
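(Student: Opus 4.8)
The plan is to deduce the corollary directly from Theorem~\ref{akey}. The statement to prove is: if $A$ is a finite dimensional $C^{\ast}$-algebra, then $\mathfrak{CS}(A)=\mathfrak{CCS}(A)$. First I would recall the classification of finite dimensional $C^{\ast}$-algebras: any such $A$ is $\ast$-isomorphic to a finite direct sum $\bigoplus_{k=1}^{m} M_{n_k}$ for some $m \in \mathbb{N}$ and positive integers $n_1,\dots,n_m$. This is the Wedderburn/Artin structure theorem for finite dimensional $C^{\ast}$-algebras, and I would cite it as standard.

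Next, I would observe that such an $A$ fits the hypothesis of Theorem~\ref{akey} with $I=\{1,\dots,m\}$ finite and $N=\max_{1\le k\le m} n_k$. Indeed $A = \sum_{k \in I}^{\oplus 0} M_{n_k}$ with $I$ finite (so the $C_0$-direct sum is just the ordinary direct sum), and trivially $n_k \le N$ for all $k$. It remains only to check that $A$ may be taken inside $\mathcal{O}_c(H)$ for some Hilbert space $H$: since $A$ is finite dimensional, every element is a finite rank (hence compact) operator once $A$ is faithfully represented on a finite dimensional Hilbert space, for instance via the direct sum of the identity representations on $\CC^{n_k}$, so $A$ embeds as a $\ast$-subalgebra of $\mathcal{O}_c(\CC^{\sum_k n_k})$. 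Thus the ambient hypothesis $A \subset \mathcal{O}_c(H)$ of Theorem~\ref{akey} is satisfied.

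With both the structural form $A = \sum_{k\in I}^{\oplus0} M_{n_k}$ with $n_k \le N$ and the embedding $A \subset \mathcal{O}_c(H)$ in hand, Theorem~\ref{akey} applies verbatim and yields $\mathfrak{CS}(A) = \mathfrak{CCS}(A)$, which is exactly the claim. I do not anticipate any genuine obstacle here: the corollary is a direct specialization of Theorem~\ref{akey}, and the only thing requiring a word of justification is that a finite dimensional $C^{\ast}$-algebra has the form $\sum_{k\in I}^{\oplus0} M_{n_k}$ with $I$ finite and bounded block sizes, which is immediate from finite dimensionality. If one wanted to be completely self-contained one could instead invoke Proposition~\ref{18051602} directly, noting that a finite dimensional $A \cong \bigoplus_{k=1}^m M_{n_k}$ is $\ast$-isomorphic to a subalgebra of $M_N \otimes C(\Omega)$ with $\Omega$ a finite (hence compact) space and $N = \max_k n_k$; then every compact Schur $A$-multiplier is completely compact, while the reverse inclusion $\mathfrak{CCS}(A)\subset\mathfrak{CS}(A)$ is trivial, giving equality.
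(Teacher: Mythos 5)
Your proposal is correct and follows the same route the paper intends: the corollary is stated immediately after Theorem~\ref{akey} with no separate proof, precisely because it is the specialization you describe (a finite dimensional $C^{\ast}$-algebra is $\ast$-isomorphic to a finite direct sum $\sum_{k\in I}^{\oplus0} M_{n_k}$ with bounded block sizes and embeds in $\mathcal{O}_c(H)$ for a finite dimensional $H$). Your alternative via Proposition~\ref{18051602} is also valid but is not a genuinely different argument, since Theorem~\ref{akey} itself rests on that proposition.
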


Combine Theorem \ref{akey} and Proposition \ref{maintheorem}, we get  \cite[Proposition 5]{MR1766604}:
\begin{corollary}
$\mathfrak{CS}(\mathbb{C})=c_0(\mathbb{N}, \mathbb{C}) \otimes_h  c_0(\mathbb{N}, \mathbb{C})$.
\end{corollary}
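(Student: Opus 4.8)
The plan is to deduce the statement directly from results already proved in the excerpt. Observe that $\mathbb{C}$ is a finite-dimensional $C^{\ast}$-algebra, so the preceding corollary gives $\mathfrak{CS}(\mathbb{C})=\mathfrak{CCS}(\mathbb{C})$; equivalently, every compact Schur $\mathbb{C}$-multiplier is automatically completely compact. Moreover $\mathbb{C}$ is elementary, being $\ast$-isomorphic to $\mathcal{O}_c(H)$ for a one-dimensional Hilbert space $H$, so Proposition~\ref{maintheorem} (with $\mathcal{O}_c(H)=\mathbb{C}$, hence $c_0(\mathbb{N},\mathcal{O}_c(H))=c_0(\mathbb{N},\mathbb{C})$) yields
\begin{equation*}
\mathfrak{CCS}(\mathbb{C})=c_0(\mathbb{N},\mathbb{C})\otimes_h c_0(\mathbb{N},\mathbb{C}).
\end{equation*}

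Combining these two equalities gives $\mathfrak{CS}(\mathbb{C})=\mathfrak{CCS}(\mathbb{C})=c_0(\mathbb{N},\mathbb{C})\otimes_h c_0(\mathbb{N},\mathbb{C})$, which is exactly the asserted identification and recovers \cite[Proposition~5]{MR1766604}. So the proof is just a two-line chain: first invoke the finite-dimensional corollary to identify compactness with complete compactness for $\mathbb{C}$, then invoke Proposition~\ref{maintheorem} to identify the completely compact Schur $\mathbb{C}$-multipliers with the Haagerup tensor product, and concatenate.

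The only point that needs a word of care — and this is the part I would be most careful to spell out — is checking that the hypotheses of Theorem~\ref{akey} (hence of the finite-dimensional corollary) and of Proposition~\ref{maintheorem} are literally met by $A=\mathbb{C}$: namely that $\mathbb{C}\subset\mathcal{O}_c(H)$ for the relevant Hilbert space, that $\mathbb{C}=\sum_{k\in I}^{\oplus 0}M_{n_k}$ with a single summand $M_1$ and bound $N=1$, and that under these identifications $c_0(\mathbb{N},\mathcal{O}_c(H))$ coincides with $c_0(\mathbb{N},\mathbb{C})$. All of this is immediate, so there is no real obstacle; the statement is a direct corollary and the proof is essentially bookkeeping, exactly as the phrase ``Combine Theorem~\ref{akey} and Proposition~\ref{maintheorem}'' preceding it suggests.
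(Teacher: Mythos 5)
Your proposal is correct and follows exactly the route the paper intends: apply Theorem~\ref{akey} (via the finite-dimensional corollary) to get $\mathfrak{CS}(\mathbb{C})=\mathfrak{CCS}(\mathbb{C})$, then Proposition~\ref{maintheorem} with $\dim H=1$ to identify $\mathfrak{CCS}(\mathbb{C})$ with $c_0(\mathbb{N},\mathbb{C})\otimes_h c_0(\mathbb{N},\mathbb{C})$. Nothing further is needed.
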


\begin{theorem}\label{mainthe}
If $A$ is a $C^{\ast}$-algebra, the following two conditions are equivalent:

(i) A is $\ast$-isomorphic to $\mathcal{O}(H)$ for some finite dimensional Hilbert space;

(ii) $\mathfrak{CS}(A)=c_0(\mathbb{N}, A) \otimes_h  c_0(\mathbb{N}, A)$.
\end{theorem}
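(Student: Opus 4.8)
The plan is to deduce the equivalence from the machinery already assembled, by splitting $\mathfrak{CS}(A)$ into its completely compact part and by using the characterization of when compactness and complete compactness coincide. First I would prove the easy direction, that (i) implies (ii). If $A \cong \mathcal{O}(H)$ with $H$ finite-dimensional, then $A$ is an elementary $C^*$-algebra (namely $A = \mathcal{O}_c(H)$ since $H$ is finite-dimensional), so by Theorem \ref{final1} we have $\mathfrak{CCS}(A) = c_0(\mathbb{N}, A) \otimes_h c_0(\mathbb{N}, A)$. Moreover $A = M_n$ for $n = \dim H$, so trivially $A = \sum_{k \in I}^{\oplus 0} M_{n_k}$ with a single summand and $n_k = n \leq n$; hence by Theorem \ref{akey} we have $\mathfrak{CS}(A) = \mathfrak{CCS}(A)$. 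Combining the two displayed equalities gives (ii).

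For the converse, suppose (ii) holds: $\mathfrak{CS}(A) = c_0(\mathbb{N}, A) \otimes_h c_0(\mathbb{N}, A)$. The first step is to observe that every completely compact Schur $A$-multiplier is in particular a compact Schur $A$-multiplier, so
\begin{equation*}
\mathfrak{CCS}(A) \subseteq \mathfrak{CS}(A) = c_0(\mathbb{N}, A) \otimes_h c_0(\mathbb{N}, A).
\end{equation*}
On the other hand, every $v \in c_0(\mathbb{N}, A) \otimes_h c_0(\mathbb{N}, A)$ is a norm-limit of finite-rank elements, and the associated Schur multiplier $S_{\varphi}$ is then a cb-limit of finite-rank maps, hence completely compact; thus the reverse inclusion $c_0(\mathbb{N}, A) \otimes_h c_0(\mathbb{N}, A) \subseteq \mathfrak{CCS}(A)$ holds as well (this is essentially the ``conversely'' half of the proof of Proposition \ref{maintheorem}, which does not use ellementarity). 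Therefore $\mathfrak{CCS}(A) = c_0(\mathbb{N}, A) \otimes_h c_0(\mathbb{N}, A) = \mathfrak{CS}(A)$. Now two conclusions follow: by Theorem \ref{final1}, $(II) \Rightarrow (I)$, the equality $\mathfrak{CCS}(A) = c_0(\mathbb{N}, A) \otimes_h c_0(\mathbb{N}, A)$ forces $A$ to be elementary, so $A = \mathcal{O}_c(H)$ for some Hilbert space $H$; and by Theorem \ref{akey}, the equality $\mathfrak{CS}(A) = \mathfrak{CCS}(A)$ forces $A = \sum_{k \in I}^{\oplus 0} M_{n_k}$ with uniformly bounded $n_k$. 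An elementary $C^*$-algebra of this form must have a single summand (an infinite direct sum $\mathcal{O}_c(H)$ is connected as a $C^*$-algebra only when $H$ is not decomposed nontrivially; more directly, $\sum_{k \in I}^{\oplus 0} M_{n_k} \cong \mathcal{O}_c(H)$ forces $|I| = 1$ since $\mathcal{O}_c(H)$ has no nontrivial central projections while the direct sum does when $|I| > 1$), so $A = M_{n}$ for a single $n \leq N$, i.e. $A \cong \mathcal{O}(H)$ with $\dim H = n < \infty$, which is (i).

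The main obstacle I anticipate is the very last step: reconciling the two structural conclusions ``$A$ elementary'' and ``$A = \sum_{k \in I}^{\oplus 0} M_{n_k}$, $n_k \leq N$.'' One must argue carefully that these together force $A$ to be a single matrix algebra $M_n$. The cleanest route is to note that an elementary $C^*$-algebra $\mathcal{O}_c(H)$ has trivial center (it is simple when $\dim H < \infty$, and has no nonzero central projections when $\dim H = \infty$), whereas $\sum_{k \in I}^{\oplus 0} M_{n_k}$ with $|I| \geq 2$ has a nontrivial central projection; hence $|I| = 1$, and then $\dim H = n_1 \leq N$ is finite. Apart from this point the argument is purely a bookkeeping assembly of Theorems \ref{final1}, \ref{akey} and \ref{maintheorem}, together with Remark \ref{usefulkey}, and should be short.
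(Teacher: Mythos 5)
Your direction (i) $\Rightarrow$ (ii) and the overall architecture of (ii) $\Rightarrow$ (i) (establish $\mathfrak{CS}(A)=\mathfrak{CCS}(A)$, then invoke Theorem \ref{akey} for the uniform bound on the matrix blocks and Theorem \ref{final1} for elementarity, and finally rule out more than one direct summand via central projections) match the paper. But there is a genuine gap in the step where you claim $c_0(\mathbb{N},A)\otimes_h c_0(\mathbb{N},A)\subseteq \mathfrak{CCS}(A)$ ``for a general $C^{\ast}$-algebra $A$'' by arguing that $v$ is a norm limit of finite sums of elementary tensors and hence $S_\varphi$ is a cb-limit of finite-rank maps. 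An elementary tensor $R\otimes S$ with $R,S\in c_0(\mathbb{N},A)$ induces the map $T\mapsto RTS$, which is \emph{not} a finite-rank (or even compact) linear map unless $R$ and $S$ are themselves compact operators on $H^\infty$; that requires the entries of $R,S$ to be compact operators, i.e.\ it requires $A$ to be of compact type. Concretely, if $A=\mathcal{O}(H)$ with $H$ infinite dimensional and $v=(e_{11}\otimes 1)\otimes(e_{11}\otimes 1)$, then $\phi_v$ restricts to the identity on the $(1,1)$-corner copy of $A$ and is not even compact, so the inclusion you assert is false in general. The ``conversely'' half of Proposition \ref{maintheorem} does use elementarity in exactly this way: the factors lie in $c_0(\mathbb{N},\mathcal{O}_c(H))$ and are therefore compact operators, so Saar's result applies.

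The gap is fixable, and the fix is precisely the paper's ordering: from hypothesis (ii) one first extracts (via Remark \ref{usefulkey}, by restricting the multiplier associated to a rank-one tensor supported at a single index) that every map $x\mapsto uxu$ on $A$ is compact, whence $A$ is of compact type by Ylinen's theorem; only \emph{then} does one know that elements of $c_0(\mathbb{N},A)$ act as compact operators, so that Theorem \ref{compacttypecase} (or the Saar argument) yields $c_0(\mathbb{N},A)\otimes_h c_0(\mathbb{N},A)\subseteq\mathfrak{CCS}(A)$ and hence $\mathfrak{CS}(A)=\mathfrak{CCS}(A)$. After that reordering, your applications of Theorems \ref{akey} and \ref{final1} and your central-projection argument for $|I|=1$ go through and agree with the paper's conclusion.
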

\begin{proof}
The implication from (i) to (ii) is the combination of Theorem \ref{final1} and Theorem \ref{akey}.

(ii) implies (i): By Remark \ref{usefulkey}, for any $u \in A$, the map from $A$ into $A$ defined by $x \mapsto uxu$ is compact, then by Ylinen ~\cite{MR0296716} $A$ is compact-type, thus $A$ has the following form:
\begin{equation*}
A=\sum_{i \in I} ^{\oplus0} \mathcal{O}_c(H_i),
\end{equation*}
where each $H_i$ is Hilbert space, and by Theorem \ref{compacttypecase}, for any $v \in C_0(\mathbb{N}, A) \otimes_h  C_0(\mathbb{N}, A)$, $\phi_v$ is completely compact Schur $A$-multiplier, so condition (ii) implies that
\begin{equation*}
\mathfrak{CS}(A) \subset \mathfrak{CCS}(A),
\end{equation*}
and of course this implies that
\begin{equation*}
c_0(\mathbb{N}, A) \otimes_h  c_0(\mathbb{N}, A)=\mathfrak{CS}(A) = \mathfrak{CCS}(A).
\end{equation*}
Therefore, by Theorem \ref{akey} $A=\sum_{k \in I} ^{\oplus0}  M_{n_k}$, $n_k \leq N$ for some $N \in \mathbb{N}$. On the other hand, by Theorem \ref{final1}, $A$ is elementary $C^{\ast}$-algebra, hence there is at most one $n_k$ is non-zero, (i) is proved.
\end{proof}

\section*{Acknowledgement}
My sincere thanks to my advisor Professor Ivan Todorov for his guidance during this work. I would also like to thank Dr.Stanislav Shkarin, he inspired me to investigate the proof of Lemma \ref{lemma 1}.

\bigskip
Mathematical Sciences Research Centre, Queen's University Belfast, Belfast, BT7 1NN, United Kingdom

Email: whe02@qub.ac.uk

\bibliographystyle{plain}

\bibliography{referencelist}

\end{document}